\numberwithin{equation}{section}
\theoremstyle{definition}
\newtheorem{theorem}[equation]{Theorem}
\newtheorem{lemma}[equation]{Lemma}
\newtheorem{claim}[equation]{Claim}
\newtheorem{corollary}[equation]{Corollary}
\newtheorem{proposition}[equation]{Proposition}
\newtheorem{definition}[equation]{Definition}
\newtheorem{assumption}[equation]{Assumption}
\newtheorem{question}[equation]{Question}
\newtheorem{remark}[equation]{Remark}
\renewcommand{\div}{{\mathrm{div}}}
\newcommand{\im}{{\mathrm{im}}}
\newcommand{\loc}{{\mathrm{loc}}}
\newcommand{\Ric}{{\mathrm{Ric}}}
\newcommand{\Rm}{{\mathrm{Rm}}}
\newcommand{\Hess}{{\mathrm{Hess}}}
\newcommand{\Tr}{\mathrm{Tr}}
\newcommand{\ALE}{{\mathrm{ALE}}}
\newcommand{\ALF}{{\mathrm{ALF}}}
\newcommand{\norm}[1]{{\lVert #1 \rVert}}
\newcommand{\RR}{{\mathbb{R}}}
\renewcommand{\SS}{{\mathbb{S}}}
\newcommand{\ZZ}{{\mathbb{Z}}}
\newcommand{\cF}{{\mathcal{F}}}
\newcommand{\cH}{{\mathcal{H}}}
\newcommand{\cL}{{\mathcal{L}}}
\newcommand{\cM}{{\mathcal{M}}}
\title{Ricci Flow on ALF manifolds}
\author{Dain Kim \and Tristan Ozuch}
\date{}
\address{MIT, Dept. of Math., 77 Massachusetts Avenue, Cambridge, MA 02139-4307}
\email{dain0327@mit.edu \and ozuch@mit.edu}
\begin{document}
\begin{abstract}
We prove that on ALF $n$-manifolds with $n\ge 4$ the Ricci flow preserves the ALF structure, and develop a weighted Fredholm framework adapted to ALF manifolds. Motivated by Perelman's $\lambda$-functional, we define a renormalized functional $\lambda_{\ALF}$ whose gradient flow is the Ricci flow. It is built from a relative mass with respect to a reference Ricci-flat metric at infinity. This yields a natural notion of variational and linear stability for Ricci-flat ALF $4$-metrics and lets us show that the conformally Kähler, non-hyperkähler examples are dynamically unstable along Ricci flow. We finally relate the sign of $\lambda_{\ALF}$ to positive relative mass statements for ALF metrics.

\end{abstract}

\maketitle

\section*{Introduction}

To understand Ricci flow on 4-manifolds, it is crucial to analyze its singularity models, both at finite time and infinite time. A folklore expectation is that ALF Ricci-flat metrics arise as singularity models in the infinite time. In this work, we develop a framework and analytic tools to study such metrics. Furthermore, we establish stability results, an important property for singularity models, for a broad class of known examples, namely hyperk\"ahler and conformally K\"ahler Ricci-flat metrics.

Complete non-compact 4-manifolds with quadratic curvature decay are often studied via their asymptotic geometry, which groups many known examples into broad classes. The first such classes were ALE (Asymptotically Locally Euclidean), ALF (Asymptotically Locally Flat), ALG, and ALH, which are distinguished by the volume growth. Two additional structures named ALG* and ALH* are found later \cite{Hei12}.
Moreover, if the hyperk\"ahler 4-manifold has faster than quadratic curvature decay, the manifold must belong to one of the classes ALE, ALF, ALG, or ALH \cite{CC21a}. On the other hand, if one only assumes that the curvature is $L^2$-integrable, then the manifold is constrained to be one of ALE, ALF, ALG, ALH, ALG*, or ALH* \cite{SZ24}.
Consequently, ALF manifolds are regarded as the next simplest asymptotic models after ALE, and in particular as the simplest collapsing models.

In contrast to the ALE case, where all known complete Ricci-flat 4-manifolds are hyperk\"ahler, the ALF setting admits non-hyperk\"ahler examples. Two standard examples are the Kerr and Taub-Bolt metrics, with the Schwarzschild metric appearing as a special case of Kerr metrics. Topologically, the Taub-Bolt metric is modeled on the Hopf fibration, while the Kerr metric corresponds to a trivial circle fibration; the latter type of geometry is referred to as asymptotically flat (AF). A na\"ive formulation of the Riemannian black hole uniqueness conjecture \cite{Gib80} suggested that the Kerr metric is the only non-flat AF Ricci-flat 4-manifold. However, this was disproved by the construction of the Chen-Teo metric \cite{CT11}. 

Notably, all these metrics (Kerr, Taub-Bolt, and Chen-Teo) exhibit special geometry: they are conformally K\"ahler.
For 4-dimensional Ricci-flat metrics, being nontrivially conformally K\"ahler is equivalent to being Hermitian but not K\"ahler.
The classification of non-K\"ahler Hermitian ALF Ricci-flat metrics with $L^2$ Riemannian curvature was completed in \cite{Li23}, building on earlier work, including the classification of toric ALF Ricci-flat metrics \cite{BG23,BGL24} and detailed analyses on Chen-Teo metrics \cite{AA24}. It is shown there that the complete list of such metrics consists precisely of the Kerr, Taub-Bolt, and Chen-Teo metrics, together with the Taub-NUT metric endowed with the reversed orientation. Moreover, these metrics are both integrable and infinitesimally rigid \cite{AA25}.

More recently, infinitely many Ricci-flat AF metrics have been constructed that are topologically distinct and not even locally Hermitian, illustrating the rich and diverse landscape of non-hyperk\"ahler Ricci-flat AF 4-manifolds \cite{LS25}. 

\subsection*{Analytic aspects of ALF metrics and Ricci flow}

We begin by developing a self-contained analysis of function spaces on ALF manifolds. Although there has been substantial work in this direction for various metrics \cite{Bar86, Min09, CC21a, CC19, CC21b, CVZ23}, including the general frameworks for fibered boundary metrics and fibered cusp metrics developed by Mazzeo and Melrose, and by Hausel, Hunsicker, and Mazzeo \cite{HHM04}, we choose to present an independent and flexible approach adapted to the ALF setting that does not rely on the $\mathbb{S}^1$-bundle at infinity having closed orbits. This allows us to simplify the arguments and avoid reliance on the full general theory. We also note that a related Fredholm result was obtained by Minerbe \cite{Min09}, who showed that the Laplacian is Fredholm between weighted Sobolev spaces on ALF manifolds. However, his analysis requires assigning different decay rates in the fiber and base directions and is carried out entirely in the Sobolev setting, which does not fit well with our purposes. Nonetheless, certain ideas in our approach were motivated by his work.

We study the Ricci flow on ALF manifolds, a topic that has also attracted interest in the physics community \cite{HW06, HSW07, Tak14}. We also note that Ricci flow has been studied in other asymptotic geometries: the asymptotically Euclidean (AE) case by \cite{OW07}, ALE manifolds \cite{DO20, DO24}, and AH manifolds by \cite{BW12,BW18}. A fundamental first step in studying the Ricci flow on ALF manifolds is to determine whether the ALF structure is preserved under the flow. We show that this is indeed the case; see \Cref{thm:alfp} for a precise statement.
\begin{theorem}
    ALF structure is preserved under the Ricci flow.
\end{theorem}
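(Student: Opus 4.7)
The plan is to set up Ricci flow in a gauge that makes it strictly parabolic and then prove that the weighted asymptotic structure of the initial metric is propagated by the flow. Fix a smooth reference ALF metric $\bar g$ that agrees outside a large compact set with the asymptotic model of $g_0$, and run the $\bar g$-DeTurck flow
\[
\partial_t g = -2\,\Ric(g) + \cL_{W(g,\bar g)} g,
\]
where $W(g,\bar g)$ is the usual DeTurck vector field. This converts Ricci flow into a quasilinear strictly parabolic system for the perturbation $h := g - \bar g$, with principal part the rough Laplacian of $\bar g$ acting on symmetric $2$-tensors. By assumption $h(0)$ lies in a weighted Hölder space $\cC^{k,\alpha}_\tau$ from the ALF Fredholm framework developed earlier.

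For short-time existence I would run the standard contraction mapping argument in a parabolic weighted Hölder space adapted to the ALF end. The weighted Schauder estimates for $\Delta_{\bar g}$ proved elliptically earlier transfer via a Duhamel argument to the heat equation $\partial_t - \Delta_{\bar g}$, giving the needed linear bounds. The nonlinear terms are quadratic in $h$ and $\nabla h$ with coefficients of ALF decay, so they are small on a short time interval and yield a unique fixed point $h(t)\in\cC^{k,\alpha}_\tau$ on some interval $[0,T)$.

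The main content of the theorem is then to show that the weight $\tau$ is not lost along the flow, so that the pullback of $g(t)$ by the diffeomorphism generated by $W$ remains ALF with the same model. Since $\bar g$ is Ricci-flat outside a compact set, the perturbation $h$ satisfies a Bochner-type differential inequality of the form
\[
\partial_t |h|^2 \;\le\; \Delta_{\bar g}\,|h|^2 + C\,|\Rm(\bar g)|\,|h|^2 + \text{quadratic in } h,\nabla h,
\]
where $|\Rm(\bar g)|=O(r^{-3})$ on the ALF end. Applying a weighted parabolic maximum principle with barriers built from the ALF heat semigroup, multiplied by $r^{2\tau}$, gives $|h(t)|_{\bar g}\le C(t)\,r^{-\tau}$; bootstrapping via the weighted Schauder estimates propagates the same decay rate to all covariant derivatives.

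The main obstacle will be obtaining sharp weighted parabolic estimates for $\Delta_{\bar g}$ on the collapsing ALF end, where the $\mathbb{S}^1$-fiber directions must be treated differently from the three base directions. I would handle this by lifting to a cover of a neighborhood of infinity on which the geometry is genuinely asymptotically Euclidean up to an $\mathbb{S}^1$-quotient, proving parabolic Schauder estimates there by standard Euclidean techniques, and descending by averaging over the $\mathbb{S}^1$-action, in the spirit of the Fredholm framework developed above. A secondary technical point is verifying that the DeTurck vector field decays like $r^{-\tau-1}$, so that the induced diffeomorphism is well defined globally and the pullback recovers a genuine Ricci flow solution with unchanged ALF model; this follows directly once the weighted decay of $h$ is established.
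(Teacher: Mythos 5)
Your route (DeTurck gauge relative to a fixed background plus weighted parabolic theory) is genuinely different from the paper's, which never fixes a gauge: there, the flow is the given bounded-curvature Ricci flow (short-time existence from Shi), the decay $|\nabla^{k}\Ric_{g(t)}|=O(\rho^{-2-k-\sigma})$ is propagated by a weighted maximum principle applied to $\partial_t\Ric=\Delta\Ric+\Rm\ast\Ric$, and the metric estimate follows by integrating $\partial_t g=-2\Ric$ in time. Your approach could be made to work, but as written its central step has a gap. In the DeTurck equation for $h=g-\bar g$ the reaction terms are of the schematic form $g^{-1}\ast g^{-1}\ast\bar\nabla h\ast\bar\nabla h$ (plus $\bar\Rm\ast h$), so pairing with $h$ produces a term of size $|\bar\nabla h|^{2}|h|$. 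With only unweighted bounds on $\bar\nabla h$ (which is all the short-time existence gives you once you leave the initial weighted time interval), this term is merely $O(|h|)$, not $O(|h|^{2})$; for $w=\rho^{2\tau}|h|^{2}$ it contributes an uncontrolled term of order $\rho^{\tau}\sqrt{w}$, and the weighted maximum principle does not close unless you already know weighted decay of $\bar\nabla h$ --- which is circular. You would have to propagate the weighted decay of $h$ and of its derivatives simultaneously (full weighted parabolic Schauder estimates plus a Gronwall/continuation argument up to the given time $T$), which is precisely the technical work your sketch defers. The paper's choice of estimating $\Ric$ instead of $h$ is what removes this difficulty: the reaction term is linear in the quantity being estimated with bounded coefficient $\Rm$, so the weighted maximum principle closes immediately, and no parabolic function-space theory is needed.

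Two further mismatches with the statement actually proved. First, the theorem asserts preservation of the decay at rate $\sigma$ toward a Ricci-flat reference $g_{\mathrm{RF}}$, not toward the model $g_0$: for a nontrivial circle fibration $g_0$ is not Ricci-flat (it even has negative scalar curvature), so your background ``$\bar g$ agreeing with the asymptotic model'' is inconsistent with your later use of Ricci-flatness of $\bar g$ and of $|\Rm(\bar g)|=O(r^{-3})$; you need to gauge against $g_{\mathrm{RF}}$ itself, for which $h(0)\in C^{k,\alpha}_{\sigma}$, otherwise you only control decay toward $g_0$ at the weaker rate $\eta$, which is not the conclusion. Second, the theorem is about an arbitrary Ricci flow with bounded curvature starting at $\overline g$ on a given interval $[0,T]$, whereas your construction produces one particular solution, a priori on a shorter interval; to conclude for the given flow you must invoke uniqueness of complete bounded-curvature Ricci flows (Chen--Zhu) after undoing the DeTurck diffeomorphisms, and you must rule out degeneration of the weighted norm before time $T$. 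Neither point is fatal, but both need to be addressed for the proof to deliver the stated theorem.
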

With this in place, we turn to the central question of this paper: 
\begin{question}
Are non-hyperk\"ahler Ricci-flat ALF manifolds dynamically stable or unstable under the Ricci flow?
\end{question}

\subsection*{A renormalized Perelman $\lambda$-functional on ALF metrics}

The study of dynamical (in)stability of Ricci-flat metrics under Ricci flow has a long history, with foundational contributions by Perelman \cite{Per02} in his resolution of the Poincar\'e conjecture. A key innovation in his work was the introduction of three functionals called $\lambda$, $\mu$, and $\nu$, and the observation that Ricci flow may be interpreted as the gradient flow of these functionals on the space of Riemannian metrics modulo diffeomorphisms. In particular, the $\lambda$-functional is defined by 
\[
    \lambda(g) := \inf_{\lVert \varphi \rVert_{L^2} = 1} \int_M (4|\nabla^g \varphi|_g^2+R_g\varphi^2),
\]
which behaves well on closed manifolds. However, on complete non-compact manifolds, this definition becomes problematic: one can show that $\lambda(g)$, defined using $L^2$-normalized test functions as above yields no useful information about the geometry of $g$.

To address this issue, Haslhofer \cite{Has11} introduced an adapted version of the $\lambda$-functional, where the test functions are required to approach 1 at infinity and differ from 1 by a compactly supported smooth function. This formulation is well suited for ALE manifolds with nonnegative and integrable scalar curvature. Moreover, he compared this adapted functional to ADM mass, which yields a quantitative positive mass theorem. Building on this idea, \cite{DO20} developed the theory in the ALE context. In particular, it is shown that if one subtracts the ADM mass from Haslhofer's adapted $\lambda$-functional, the resulting quantity, which we denote by $\lambda_{\ALE}$, is well-defined in a neighborhood of Ricci-flat metrics in a suitable weighted H\"older spaces. Remarkably, $\lambda_{\ALE}$ remains meaningful even when the scalar curvature is not integrable or the ADM mass is not well-defined, and the Ricci flow continues to behave as a gradient flow of this functional in a weighted $L^2$-sense.

In this work, we extend their approach to the ALF setting. While there is no universally accepted definition of mass for ALF manifolds, though several works exist \cite{Dai04, Min09, KW25}, we introduce a notion of relative mass, defined with respect to a fixed Ricci-flat reference metric. More precisely, we define the relative mass as 
\[
    m(g, g_{\mathrm{RF}}) = \lim_{R \to \infty} \int_{B_R} \div_{g_{\mathrm{RF}}}(\div_{g_{\mathrm{RF}}}(g-g_{\mathrm{RF}}) - \nabla^{g_{\mathrm{RF}}}\Tr(g-g_{\mathrm{RF}})),
\]
where $g_{\mathrm{RF}}$ is a fixed background Ricci-flat metric (defined in a neighborhood of infinity). The relative mass exhibits additive structure, and therefore changing of reference metric modifies the mass only by an additive constant. Consequently, such a change has no geometric effect on the overall theory. We nevertheless believe that the background Ricci-flat metric is the natural choice (see the discussion preceding \Cref{thm:R3S1}).

Using this, we define an analogous functional $\lambda_{\ALF}$, modeled after $\lambda_{\ALE}$, as
\[
    \lambda_{\ALF}(g, g_{\mathrm{RF}}) = \inf_{\varphi - 1 \in C_c^\infty(M)} \int_M (4|\nabla^g \varphi|_g^2 + R_g \varphi^2) - m(g, g_{\mathrm{RF}})
\]
and show that it is well-defined in a neighborhood of ALF metrics that are either close to Ricci-flat metric or have nonnegative and integrable scalar curvature. Moreover, we prove that the Ricci flow is the gradient flow of $\lambda_{\ALF}$ in a weighted $L^2$-sense.

\subsection*{Dynamical instability of Ricci-flat ALF metrics}
As Ricci-flat metrics are critical points of the Ricci flow, it is natural to ask about their stability. In the compact case this question has been studied extensively by Sesum \cite{Ses06}, Haslhofer \cite{Has12}, and Haslhofer and M\"uller \cite{HM14}. Of particular relevance for us is the ALE case, which has been analyzed in \cite{DO20}. Equipped with the $\lambda_{\ALF}$ functional, we now turn to the stability of Ricci-flat ALF metrics. We establish the dynamical instability of non-hyperk\"ahler but conformally K\"ahler ALF metrics:
\begin{theorem}[Dynamical Instability]\label{thm:dyn instability}
    Each of the Kerr, Taub-Bolt, and Chen-Teo metrics is dynamically unstable under Ricci flow.
\end{theorem}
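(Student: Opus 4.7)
The plan is to deduce dynamical instability from linear (variational) instability, using the gradient flow structure of $\lambda_{\ALF}$ established earlier in the paper. Since each of Kerr, Taub-Bolt, and Chen-Teo is Ricci-flat, it is a critical point of $\lambda_{\ALF}$, and the infinitesimal rigidity from \cite{AA25} ensures there are no nontrivial Ricci-flat deformations in the relevant weighted space. It therefore suffices to exhibit a single transverse-traceless direction $h$ in the weighted slice with $\lambda''_{\ALF}(h,h) > 0$: a Lojasiewicz-Simon type argument in the spirit of \cite{HM14} and the ALE analogue \cite{DO20}, now available in the ALF setting thanks to the weighted Fredholm framework and \Cref{thm:alfp}, upgrades such a variational direction into a Ricci flow trajectory that drifts away from $g$.

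The first concrete task is to compute the Hessian. Expanding $\lambda_{\ALF}(g + th, g_{\mathrm{RF}})$ to second order in $t$, the linear scalar-curvature term vanishes by Ricci-flatness, and the relative mass $m(g,g_{\mathrm{RF}})$, being linear in $h$, contributes nothing at second order. The whole point of the renormalization is that it cancels the boundary divergences that would otherwise obstruct the usual integration by parts in the noncompact setting. After the standard gauge fixing onto the divergence-free, trace-free slice, the computation reduces to
\[
    \lambda''_{\ALF}(h,h) \; = \; -\tfrac{1}{2}\int_M \langle \Delta_L h,\, h\rangle_g,
\]
with $\Delta_L h = \Delta h + 2\,\Rm(h)$ the Lichnerowicz operator on the Ricci-flat background. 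Linear instability then becomes the existence of a negative eigenvalue of $\Delta_L$ on the weighted TT slice.

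The candidate unstable direction will come from the conformally K\"ahler but non-K\"ahler structure. Writing $g = |W^+|^{-2/3}\hat g$ for the associated K\"ahler metric $\hat g$, I would form a symmetric $2$-tensor $h_0$ out of the K\"ahler form $\hat\omega$ and the almost complex structure $J$ (for instance, an appropriate conformal rescaling of the symmetric part of $\hat\omega \circ J$), verify ALF decay, and project onto the weighted TT slice. The hard part is verifying $\int_M \langle \Delta_L h_0, h_0\rangle_g < 0$: I would attack this using Derdzinski-LeBrun style Weitzenb\"ock identities specific to conformally K\"ahler Einstein $4$-manifolds, where the non-parallelism of $J$ produces an explicit negative curvature contribution to $\Delta_L$. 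A complementary viewpoint, available case-by-case for these three families, is the physics observation that each of them has negative specific heat and thus a Gross-Perry-Yaffe-type single negative mode of $\Delta_L$ (for Schwarzschild this goes back to \cite{HW06}); matching that physical mode with the weighted TT decomposition produced by our Fredholm framework would supply the desired $h_0$. Handling all three metrics uniformly, rather than case-by-case via their toric symmetries and the classification \cite{Li23}, is where I expect the bulk of the technical effort to lie.
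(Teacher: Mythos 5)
Your high-level strategy (exhibit a direction increasing the second variation of $\lambda_{\ALF}$ on the gauge-fixed slice, then use the gradient structure to conclude dynamical instability) is the same as the paper's, but the core of the proof is missing. The entire content of the theorem is the production of a \emph{concrete} divergence-free, trace-free perturbation with verified sign and ALF decay, and you defer exactly this step: your candidate built from ``the symmetric part of $\hat\omega\circ J$'' is essentially a multiple of the K\"ahler metric itself and does not obviously give a nontrivial TT direction, and the verification of $\delta^2_{g_{\mathrm{RF}}}[\lambda_{\ALF}](h_0,h_0)>0$ is left to unspecified Derdzinski--LeBrun Weitzenb\"ock identities or to the physics heuristic of GPY-type negative modes, neither of which is carried out (and no such uniform Weitzenb\"ock argument is known for Kerr, Taub-Bolt and Chen-Teo). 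The paper instead imports the explicit tensor $h=\hat f^{3}\,\omega_-\circ\omega_+$ of \cite{BO23}, which satisfies $\Delta_L h=c\hat f^{-3}h$ with $c>0$ and $\div_{g_{\mathrm{RF}}}(\hat f^{-3}h)=0$, and — this is the new analytic work — gauge-fixes it to $h_0=h-\div_0^\ast d\psi$ by solving for $\psi$ with the weighted Fredholm theory of \Cref{cor:lfi} (Appendix A), then shows the correction does not spoil the sign via
\[
\delta^2_{g_{\mathrm{RF}}}[\lambda_{\ALF}](h_0,h_0)=\tfrac12\int_M\langle\Delta_L h_0,h\rangle=c\int_M\hat f^{-3}|h|^2>0 .
\]
Calling the gauge-fixing ``standard'' hides precisely the difficulty that in \cite{BO23} required $b$-calculus and that this paper resolves; moreover, without the structural identities above, projecting onto the TT slice could a priori change the sign of the quadratic form, so this step cannot be waved through.

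A second, smaller issue: your passage from linear to dynamical instability invokes a \L{}ojasiewicz--Simon argument in the spirit of \cite{HM14}, together with integrability from \cite{AA25}. No \L{}ojasiewicz inequality for $\lambda_{\ALF}$ is established in the ALF setting, so as written this step is also unsupported; it is also unnecessary. The paper's route is elementary: once $\delta^2_{g_{\mathrm{RF}}}[\lambda_{\ALF}](h_0,h_0)>0$, one gets $\lambda_{\ALF}(g_{\mathrm{RF}}+sh_0,g_{\mathrm{RF}})>0=\lambda_{\ALF}(g_{\mathrm{RF}},g_{\mathrm{RF}})$ for small $s$, and since $\lambda_{\ALF}$ is nondecreasing along the flow (\Cref{thm:mlrf}) and continuous in $C^{2,\alpha}_\sigma$, the flow starting at $g_{\mathrm{RF}}+sh_0$ stays a definite distance from $g_{\mathrm{RF}}$. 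You should replace the \L{}ojasiewicz step by this monotonicity-plus-continuity argument (also note your Hessian formula $-\tfrac12\int\langle\Delta_L h,h\rangle$ with $\Delta_L=\Delta+2\Rm$ has the opposite sign to the standard second-variation formula used here; with the paper's conventions the correct expression is $+\tfrac12\int\langle(\Delta-2\Rm_g)h,h\rangle$ on the TT slice at a Ricci-flat metric).
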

For Ricci-flat ALF metrics, the second variation of $\lambda_{\ALF}$ coincides on traceless and divergence-free $2$-tensors with the the second variation of the Einstein-Hilbert functional, defined by $\mathcal{S}(g) := \int R_g$. As a consequence, our results imply that linear instability directly leads to dynamical instability under the Ricci flow. 
We emphasize that gauge-fixing is crucial, because otherwise
\begin{enumerate}
    \item it is not clear whether the 2-tensors genuinely increase the functional because the second variation behaves poorly without gauge-fixing, and
    \item from a dynamical point of view, one might merely construct nontrivial DeTurck-Ricci flows rather than genuine nontrivial Ricci flows.
\end{enumerate}

In \cite{BO23}, linear instability of the Kerr, Taub-Bolt, and Chen-Teo metrics is demonstrated via explicit variations that decreases $\mathcal{S}$. However, the initial variations used there are not divergence-free, and making them so, which is necessary for fixing the gauge, required the use of the $b$-calculus framework as a black box. Using the analytic tools developed in this work, we show that these variations can be adjusted to be divergence-free without invoking $b$-calculus, thereby providing a more elementary and self-contained proof of the instability of these metrics. Also, our work justifies that these metrics are not only \emph{linearly unstable} but \emph{dynamically unstable} along Ricci flow. 

While related discussions on the dynamical instability of Taub-Bolt metric can be found in \cite{Hug24, Hug25a, Hug25b}, our analysis follows a different approach and is independent of those works.

\subsection*{Positive relative mass, stability and rigidity for ALF metrics}

We conclude by providing a positive mass theorem in terms of $\lambda_{\ALF}$. 
\begin{theorem}\label{thm:R3S1}
    If $(M^{m+1}, g)$ is a spin AF manifold with nonnegative scalar curvature, then 
    \[
        \lambda_{\ALF}(g, \overline{g}) \le 0
    \]
    for a reference metric $\overline{g}$ asymptotic to $\RR^m \times S^1$. The equality holds if and only if $g$ is isometric to $\mathbb{R}^m \times S^1$. 
\end{theorem}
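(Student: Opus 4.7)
The plan is to adapt Haslhofer's spinor proof of $\lambda_{\ALE}\le 0$ to the AF setting, substituting the relative mass $m(g,\bar g)$ for the ADM mass and using the modulus of a distinguished harmonic spinor as a trial function for $\lambda_{\ALF}$. Let $\psi_0$ be a unit-norm parallel spinor on the reference end $\RR^m\times S^1$; such a $\psi_0$ exists because the relevant spin structure on $S^1$ is the one under which constant spinors descend from $\RR$. First I would construct a Dirac-harmonic spinor $\psi$ on $(M,g)$ with $\psi-\psi_0\to 0$ at infinity by extending $\chi\psi_0$ smoothly across $M$ and solving $D\eta = -D(\chi\psi_0)$ in a weighted Sobolev space from the ALF Fredholm framework developed earlier in the paper. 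Invertibility of $D$ on this space comes from the Lichnerowicz identity $D^2=\nabla^{*}\nabla+R_g/4$ together with $R_g\ge 0$, which rules out any $L^2$ kernel.

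Next I would run the Witten computation. Integrating Lichnerowicz against $\psi$ on $B_R$ and using $D\psi=0$ gives
\[
    \int_{B_R}\!\bigl(|\nabla\psi|_g^2 + \tfrac{R_g}{4}|\psi|^2\bigr) \;=\; -\,\mathrm{B}_R,
\]
where $\mathrm{B}_R$ is a spinor boundary term on $\partial B_R$. Expanding the spinor connection to first order in $g-\bar g$ and using $\nabla^{\bar g}\psi_0=0$, the term $\mathrm{B}_R$ reduces up to negligible remainders to a fixed dimensional multiple of $\int_{\partial B_R}\!(\div_{\bar g}(g-\bar g)-\nabla^{\bar g}\Tr(g-\bar g))\cdot\nu$. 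With the normalization built into the paper's definition of $m(g,\bar g)$, this yields the ALF Witten identity
\[
    m(g,\bar g)\;=\;\int_M \bigl(4|\nabla\psi|_g^2 + R_g|\psi|^2\bigr).
\]
Then I would plug $\varphi=|\psi|$ into $\lambda_{\ALF}$. Since $\varphi-1$ is not compactly supported, I replace it by $\varphi_R:=1+\eta_R(|\psi|-1)$ for a cutoff $\eta_R$ equal to $1$ on $B_R$ and vanishing outside $B_{2R}$; the polynomial decay of $\psi-\psi_0$ and of $\nabla\psi$ coming from the Fredholm estimates forces $\int(4|\nabla\varphi_R|_g^2+R_g\varphi_R^2)\to \int(4|\nabla|\psi||_g^2+R_g|\psi|^2)$. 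Kato's inequality $|\nabla|\psi||_g\le |\nabla\psi|_g$ then bounds the latter by the Witten expression above, so $\lambda_{\ALF}(g,\bar g)\le m(g,\bar g)-m(g,\bar g)=0$.

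For rigidity, equality $\lambda_{\ALF}(g,\bar g)=0$ forces Kato to saturate ($\nabla\psi\equiv 0$), $R_g|\psi|^2\equiv 0$, and then $R_g\equiv 0$ since $|\psi|$ is a positive constant by parallelism. Running the construction for a basis of parallel spinors on $\RR^m\times S^1$ yields a basis of parallel spinors on $(M,g)$, whence $g$ has trivial holonomy and is flat; the AF asymptotic model then identifies $(M,g)$ isometrically with $\RR^m\times S^1$. The main obstacle I anticipate is the boundary-term analysis in the Witten step: in the AF regime one must handle the $S^1$ Fourier decomposition of $\psi$, verifying that only the zero mode contributes to the mass at infinity while higher modes decay exponentially and are absorbed into the remainder. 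A related subtlety is choosing the Fredholm weight so that $D$ is invertible in Step 1 and simultaneously produces enough decay of $\psi-\psi_0$ to justify the cutoff in Step 3; both are governed by the indicial roots of the weighted framework built earlier.
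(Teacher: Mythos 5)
Your route is genuinely different from the paper's. The paper does not rerun Witten's argument on $(M,g)$ itself: it proves two identities, $m_{f}(g,\overline g)=m(\widetilde g,\overline g)$ for the conformal metric $\widetilde g=e^{-2f_g/m}g$ (\Cref{prop:wmm}) and $m_{f_g}(g,\overline g)=-\lambda_{\ALF}(g,\overline g)$ (\Cref{prop: wml}), so that $-\lambda_{\ALF}(g,\overline g)$ is identified with the relative mass of $\widetilde g$, whose scalar curvature $\frac{1}{m}e^{2f_g/m}|\nabla^g f_g|_g^2$ is automatically nonnegative and integrable; it then quotes Dai's spin positive mass theorem \cite{Dai04} as a black box, and in the equality case $R_{\widetilde g}\equiv0$ forces $f_g$ constant, hence $g=\widetilde g$ flat and isometric to $\RR^m\times S^1$. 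Your direct chain $\lambda^\circ_{\ALF}(g)\le\widetilde{\cF}_{\ALF}(g,|\psi|)\le\int_M(4|\nabla\psi|^2+R_g|\psi|^2)=m(g,\overline g)$ via Kato is exactly the ingredient the paper itself deploys in the proof of \Cref{thm:HP}, so the idea is sound; what the conformal detour buys is that no spinor analysis has to be redone and that the renormalized regime is covered.

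Two concrete gaps. First, you invert the Dirac operator by appealing to "the ALF Fredholm framework developed earlier in the paper", but that framework (Section 2) concerns only the scalar Laplacian on functions; the paper explicitly notes that weighted spinor analysis on ALF ends is not developed there, and for the $\RR^m\times S^1$ end the mapping properties of $D$, the existence of the harmonic spinor asymptotic to $\psi_0$, and the identification of the boundary term with the mass are precisely the content of \cite{Dai04} (your own closing caveat about the $S^1$ mode decomposition is the nontrivial part of that analysis), so this step should be cited rather than claimed as a consequence of Section 2. Second, your argument uses the splitting $\lambda_{\ALF}=\lambda^\circ_{\ALF}-m$ with both terms finite, which effectively requires $R_g\in L^1$: if $R_g\ge0$ is not integrable, every competitor $w$ with $w-1\in C_c^\infty(M)$ gives $\int_M R_g w^2=+\infty$, so $\lambda^\circ_{\ALF}(g)=+\infty$ and $m(g,\overline g)=+\infty$, and "$\lambda^\circ-m\le m-m$" is vacuous; in that regime $\lambda_{\ALF}$ is defined by the combined limit of \Cref{prop:le}, and it is exactly the conformal-metric identity that lets the paper conclude there. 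Minor points: Kato saturation alone only gives $\nabla\psi=\theta\otimes\psi$ for a real $1$-form $\theta$; you need $D\psi=0$ and injectivity of Clifford multiplication to conclude $\nabla\psi\equiv0$, after which $R_g\equiv0$ follows from Lichnerowicz; and the final identification indeed requires your full-basis-of-parallel-spinors step together with a short argument that a complete flat manifold with this AF end is $\RR^m\times S^1$.
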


Similar statements hold for other hyperkähler ALF manifolds with flat ends. One such example is a family of metrics conjectured by Page \cite{Pag81} and first rigorously constructed by Hitchin \cite{Hit84} and then by Biquard and Minerbe \cite{BM11} which are now often called the $D_2$ ALF spaces. These metrics decay faster than $r^{-2}$ to the flat metric on $(\mathbb{R}^3\times \mathbb{S}^1)/\mathbb{Z}_2$ with $\mathbb{Z}_2$ acting by $(x,y)\mapsto (-x,-y)$ and therefore have zero relative mass. We show that they are the only such metrics on their topologies with their asymptotics.

\begin{theorem}\label{thm:HP}
    Let $(M^4,g)$ be a metric with nonnegative scalar curvature with $M^4$ the same topology as Hitchin-Page's ALF metrics, or more generally with $M^4$ a spin manifold whose spin structure is compatible with that of the flat $(\mathbb{R}^3\times \mathbb{S}^1)/\mathbb{Z}_2$ at infinity.
    Then one has 
    \[
    m(g) = m(g, \overline{g}) \ge 0 \quad \text{and} \quad \lambda_{\ALF}(g,\overline{g}) = \lambda_{\ALF}(g,g_{HP})\leq 0,
    \]
    where $\overline{g}$ is a metric equal to the flat metric on $(\mathbb{R}^3\times \mathbb{S}^1)/\mathbb{Z}_2$ at infinity, and $g_{HP}$ is one of the metrics of Hitchin-Page. There is equality if and only if $g$ is one of Hitchin-Page's hyperkähler metrics.
\end{theorem}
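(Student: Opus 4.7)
The approach is to adapt the Witten spinor proof of the positive mass theorem underlying \Cref{thm:R3S1} to the non-trivial quotient asymptotic model $(\RR^3 \times \SS^1)/\ZZ_2$. The spin-structure compatibility hypothesis ensures that the flat quotient carries a non-zero parallel spinor $\psi_0$: parallel spinors on the cover $\RR^3 \times \SS^1$ are constant, the $\ZZ_2$ action $(x, y) \mapsto (-x, -y)$ lifts to the spin bundle, and the compatibility hypothesis selects a spin structure for which this lift has a non-trivial $+1$-eigenspace. Using the weighted Fredholm theory for the Dirac operator developed earlier in the paper, I would solve $D_g \psi = 0$ on $(M, g)$ with $\psi - \psi_0$ in an appropriate weighted space at infinity.

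The Lichnerowicz identity $D_g^2 = \nabla^*\nabla + R_g/4$, integrated by parts on a large ball and combined with the asymptotic expansions of $\psi$ and $g - \overline g$, yields
\[
\int_M\bigl(|\nabla\psi|^2 + \tfrac{R_g}{4}|\psi|^2\bigr) = C \cdot m(g, \overline g)
\]
for an explicit positive constant $C$. Since $R_g \geq 0$, this gives $m(g, \overline g) \geq 0$. For the $\lambda_{\ALF}$ inequality I would apply Kato's inequality $|\nabla|\psi||^2 \leq |\nabla\psi|^2$ and use $\varphi = |\psi|$ (truncated by a cutoff so that $\varphi - 1 \in C_c^\infty$, then letting the cutoff expand), obtaining
\[
\int_M\bigl(4|\nabla|\psi||^2 + R_g |\psi|^2\bigr) \leq 4\int_M\bigl(|\nabla\psi|^2 + \tfrac{R_g}{4}|\psi|^2\bigr) = 4C \cdot m(g, \overline g).
\]
Calibrating the mass normalization so that $4C = 1$, which matches the $4|\nabla\varphi|^2 + R\varphi^2$ in the $\lambda$-functional, then gives $\lambda_{\ALF}(g, \overline g) \leq 0$. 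The identifications $m(g, g_{HP}) = m(g, \overline g)$ and $\lambda_{\ALF}(g, g_{HP}) = \lambda_{\ALF}(g, \overline g)$ follow from $g_{HP} - \overline g = o(r^{-2})$, which makes $m(\overline g, g_{HP}) = 0$ and the relative mass insensitive to the choice of reference in this regime.

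For rigidity, equality in either bound forces the left-hand side of the Lichnerowicz identity to vanish together with the Kato defect, so $\nabla\psi \equiv 0$ and $R_g \equiv 0$. Hence $\psi$ is a non-zero parallel spinor on $M$, $\mathrm{Hol}(g)$ is contained in $SU(2)$, and $g$ is hyperkähler; the classification of ALF hyperkähler 4-manifolds in the Hitchin-Page topology with the specified asymptotics (Hitchin \cite{Hit84}, Biquard-Minerbe \cite{BM11}) then identifies $g$ with one of the Hitchin-Page metrics $g_{HP}$. The main obstacle is the boundary-term identification and the rigidity step: matching the Lichnerowicz boundary integral with precisely $C \cdot m(g, \overline g)$ and calibrating constants so $4C = 1$ requires the weighted Fredholm framework from earlier in the paper to yield sufficiently sharp asymptotic expansions of the Witten spinor, and the equality case relies on the external classification of ALF hyperkähler 4-manifolds in the Hitchin-Page topology to deduce the metric from just the existence of a parallel spinor.
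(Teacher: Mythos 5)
Your proposal follows essentially the same route as the paper: a Witten-type spinor argument giving the mass formula $m(g,\overline g)=\int_M(4|\nabla\psi|^2+R_g|\psi|^2)$, Kato's inequality with $|\psi|$ as a test function to get $\lambda_{\ALF}^\circ(g)\le m(g,\overline g)$ and hence $\lambda_{\ALF}(g,\overline g)\le 0$, rigidity via the parallel spinor forcing $g$ hyperkähler, and additivity of the relative mass together with the $O(r^{-3+\varepsilon})$ decay of $g_{HP}-\overline g$ to identify the two normalizations.

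One inaccuracy to fix: you justify the existence of the harmonic spinor asymptotic to $\psi_0$ by appealing to ``the weighted Fredholm theory for the Dirac operator developed earlier in the paper,'' but the paper's Fredholm framework (\Cref{sec:fl}) is only for the scalar Laplacian; no Dirac-operator analysis in weighted spaces is developed there, and the paper explicitly notes such spinor analysis is unavailable for general ALF ends. The paper instead exploits the flatness of the end: it passes to the double cover $\mathbb{R}^3\times\mathbb{S}^1\to(\mathbb{R}^3\times\mathbb{S}^1)/\mathbb{Z}_2$ and invokes Dai's positive mass theorem machinery \cite{Dai04} to produce the Witten spinor and the mass formula. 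So your existence step should be rerouted through that covering argument (or an equivalent external result) rather than the paper's internal framework; with that substitution the rest of your argument matches the paper's proof.
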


\begin{remark}
    Theorem \ref{thm:HP} highlights the topological constraints at infinity in \cite{KW25}: within the ALF class they study, the only metric with nonnegative scalar curvature and vanishing mass is $\mathbb{R}^3\times \mathbb{S}^1$. In contrast, in our setting every zero-mass metric is hyperkähler. This parallels the AE positive mass theorem, where equality forces the Euclidean metric, whereas allowing ALE ends yields equality for hyperkähler metrics.
\end{remark}

We interpret Theorems \ref{thm:R3S1} and \ref{thm:HP} as \textit{global} stability of hyperkähler metrics among metrics with nonnegative scalar curvature: they are the absolute maximizers of $\lambda_{\ALF}$. In particular, Ricci flows starting at a metric with nonnegative scalar curvature will always have a $\lambda_{\ALF}$-functional that's lower than the corresponding hyperkähler metrics.
\\

The other hyperkähler ALF metrics have more complicated ALF ends, and to the authors' knowledge, the analysis of spinors in adapted weighted spaces is not available in the literature. 

\begin{question}
    Let $(M,g_{HK})$ be a hyperkähler ALF metrics and $g$ an ALF metric with nonnegative scalar curvature and with the same model metric as $g_{HK}$ at infinity. Then, do we have a positive relative mass theorem:
    $$m(g,g_{HK})\geq 0\qquad\qquad\text{ and }\qquad \qquad\lambda_{\ALF}(g,g_{HK})\leq0$$
    with equality if and only if $g$ is hyperkähler? 
\end{question}

These inequalities would \textit{not} be satisfied with Minerbe's notion of mass since there are hyperkähler ALF metrics with negative mass as discussed in \cite[Section 3]{BM11}. This is one reason why we believe that the notion of relative mass is more relevant to the study of Ricci flow. Another very recent motivation is \cite{KW25}, where the inequality $m(g,g_{HK})\geq 0$ for the multi Taub-NUT hyperkähler ALF metrics is left as an open question on a smaller set of metrics.

Another motivation comes from the rigidity of Ricci-flat metrics on ALF manifolds. A refined version of the Hitchin-Thorpe inequality \cite[(2.14)]{GP80} together with the Chern-Gauss-Bonnet formula for ALF manifolds imposes strong restrictions on the Ricci-flat metrics. For example, in the case of manifolds diffeomorphic to $\RR^4$, the only ALF Ricci-flat metric, up to homothety, is the Taub-NUT metric. This suggests that it is natural to define and compare the mass with respect to such a Ricci-flat reference metric rather than with a model metric, denoted $g_0$ below, with negative scalar curvature if the fibration is nontrivial.

One can also ask the question of the behavior of Ricci flow near the new metrics of \cite{LS25}.
\begin{question}[{Ricci flow analogue of \cite[Question 8.8]{LS25}}] Are the new examples of Ricci-flat metric constructed in \cite{LS25} dynamically or linearly stable along Ricci flow?
\end{question}

Intuitively, one might conjecture from \cite{BO23} that the more selfdual and anti-selfdual harmonic $2$-forms on the manifold, the more unstable it should be. This has however only been verified on a short list of examples.

\subsection*{Organization of the paper} The paper is organized as follows. Section 1 introduces the definition of ALF metrics. Section 2 establishes that the Laplacian is a Fredholm operator in weighted H\"older spaces on ALF manifolds. Section 3 demonstrates that the ALF structure is preserved under the Ricci flow. In Section 4, we define the $\lambda_{\ALF}$ functional for ALF metrics that are close to Ricci-flat metrics in a weighted H\"older norm, and we show that the Ricci flow is the gradient flow of this functional in a weighted $L^2$ sense. Section 5 proves the dynamical instability of non-hyperk\"ahler conformally K\"ahler Ricci-flat ALF metrics. Finally, Section 6 discusses the positive relative mass theorem in the context of $\lambda_{\ALF}$ functional.

\subsection*{Acknowledgements}
The authors are grateful to Alix Deruelle and Rafe Mazzeo for their interests and insightful comments. DK would like to thank Bill Minicozzi for his continous support and inspiring conversations. During this project, DK was partially supported by NSF Grant DMS-2304684 and TO was partially supported by NSF Grant DMS-2405328.

\section{ALF manifolds}
\label{sec:alf}
Let $M^{m+1}$, $m \ge 3$, be a smooth Riemannian manifold such that outside a compact set $K \subset M$, there is a diffeomorphism $\Phi \colon M \setminus K \to (1,\infty) \times E$ where $E$ is the total space of a principal $S^1$-bundle over $\mathbb{S}^{m-1}$. For a given $L>0$, we say that $g_0$ is a \emph{model metric on} $M$ (\emph{with fiber length} $L$) if 
\[
    g_0 = \Phi^\ast(dr^2 + r^2\sigma + \theta^2)
\]
on $M\setminus K$ where $\sigma$ is the standard round metric on $\mathbb{S}^{m-1}$ and $\theta$ is a connection 1-form on $E$ with fiber length $L$. We also define a projection map $\pi \colon M \setminus K \to \RR^m \setminus \overline{B_1(0)} \cong (1,\infty) \times \mathbb{S}^{m-1}$.
Then we call $(M,g)$ \emph{Asymptotically Locally Flat (ALF)} of order $\eta > 0$ if for all $k \ge 0$, 
\[
    r^k|\nabla^{g_0, k}(g-g_0)| = O(r^{-\eta}) \text{ on } M \setminus K 
\]
where $r$ is the projection of $\Phi$ onto the radial interval $(1,\infty)$.

On ALF manifolds $(M,g)$, we define weighted H\"older spaces. 
Fix a smooth positive function $\rho = \rho_{g_0} > 1$ that agrees to $\pi^\ast r_{\RR^m \setminus \overline{B_2(0)}}$.
In particular, note that $\rho$ is invariant under $S^1$-action on $M \setminus K$ and equivalent to $r_g$.
For $\tau \in \RR$ and a tensor $s$ on $M$, we define its $C_\tau^k(M)$ norm as 
\[
    \norm{s}^g_{C_\tau^k(M)} = \sup_M \rho^{\tau}\left( \sum_{i=0}^k \rho^k |\nabla^{g,k}s|_g \right).
\]
For $\alpha \in (0,1)$, we define $C_\tau^{k,\alpha}(M)$ norm as 
\[
    \norm{s}^g_{C_\tau^{k,\alpha}(M)} = \sup_{M} \rho^\tau \left( \sum_{i=0}^k \rho^k |\nabla^{g,k}s|_g + \rho^{k+\alpha} [\nabla^{g,k}s]_{C^{0,\alpha}} \right),
\]
where $[\phi]_{C^{0,\alpha}}$ is defined on $B_{\rho(x)/2}(x)$ for each $x \in M$. 

\section{Fredholmness of Laplacians on ALF manifolds}
\label{sec:fl}

We now proceed to the proof that the Laplacian is Fredholm on ALF manifolds. Since Fredholmness is preserved under compact perturbations, it suffices to establish the Fredholmness for ALF manifolds whose metric agrees with the model metric at infinity. In other words, we first treat the case of ALF manifolds that admit a principal $S^1$-bundle structure outside a compact set.

\begin{assumption}
\begin{enumerate}
    \item $m \ge 3$, $\eta > 0$, and $\alpha \in (0,1)$ are given.
    \item $(M, g)$ is an ALF manifold of order $\eta$ with a model metric $g_0$ whose fiber length is $L>0$.
    \item Fix $o \in M$. $R_0 > 100L$ is large enough so that $K \subset B_{R_0}(o)$.
    \item On $M \setminus B_{R_0}(0)$, the metric $g$ agrees with the model metric $g_0$.
\end{enumerate}
\label{ass:fl}
\end{assumption}

Unless otherwise stated, the constant $C=C(M,\eta)>0$ may increase from line to line in the proofs.
Throughout this section, we assume the followings.
When it is clear from the context, we drop the center of the ball and simply write $B_{R_0} = B_{R_0}(o)$.

\subsection{Exceptional values of $\Delta_g$}
To analyze the growth of harmonic functions on $M$, we decompose functions into two parts: a function that is independent of the fiber and a function that integrates to 0 over each fiber but can take different values within each fiber. This decomposition was motivated by \cite{Min09}.

Note that $M \setminus K$ naturally inherits a $S^1$-principal bundle structure. For $u \colon M \setminus K \to \RR$, define $u^\pi, u^\perp \colon M\setminus K \to \RR$ by 
\begin{align*}
u^\pi(x) &= \int_{\iota \in S^1} u(\iota \cdot x),\\
u^\perp(x) &= \left(u- \frac{1}{L} u^\pi\right)(x),
\end{align*}
so that $u^\perp$ has average zero on each fiber.
We note two facts regarding to above decomposition.
\begin{itemize}
\item If $u \in C_\tau^{2,\alpha}(M)$, then both $u^\pi, u^\perp \in C_\tau^{2,\alpha}(M \setminus K)$.

\item If $u$ is harmonic, then 
\[
    \Delta_g u^\pi(x) = \int_{\iota \in S^1} (\Delta_g u)(\iota \cdot x) = 0
\]
for $x \in M \setminus K$, and therefore $\Delta_g u^\perp = \Delta_g (u-u^\pi) = 0$ on $M \setminus K$. So both $u^\pi$ and $u^\perp$ are harmonic on $M \setminus K$.
\end{itemize}

\begin{proposition}
Let $(M,g)$ be an ALF manifold as in \Cref{ass:fl}.
For $\tau \in \RR$, if $u \in C_\tau^{2,\alpha}(M \setminus K)$ is harmonic, then $u^\perp$ decays faster than any polynomials, i.e. $u^\perp = o(\rho^{-\kappa})$ for any $\kappa > 0$. 
\label{prop:gr}
\end{proposition}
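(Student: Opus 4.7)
The plan is to Fourier-decompose $u^\perp$ along the $\SS^1$-fibers at infinity and show that every nonzero Fourier mode decays exponentially by exploiting a Helmholtz-type equation whose positive mass comes from the fiber direction. Since $g = g_0$ on $M\setminus B_{R_0}$ by \Cref{ass:fl}, the generator $V$ of the $\SS^1$-action is Killing there and commutes with $\Delta_g$. For $k\in\ZZ$ define
\[
u_k(x) \;=\; \tfrac1L\int_0^L u(\phi_t(x))\,e^{-2\pi i k t/L}\,dt,
\]
where $\phi_t$ is the flow of $V$ with period $L$. Then $V(u_k) = (2\pi i k/L)u_k$, each $u_k$ is harmonic on $M\setminus B_{R_0}$ (as $u\mapsto u_k$ is built from isometries that commute with $\Delta_g$), and $u^\perp = \sum_{k\neq 0}u_k$ pointwise.

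The key subharmonicity: since $g_0$ realizes the end as a Riemannian submersion over $(1,\infty)\times\SS^{m-1}$ with totally geodesic fibers of constant length $L$, basic functions $w$ satisfy $\Delta_{g_0} w = \Delta_B w$ for the base Laplacian $\Delta_B$. Because $|u_k|^2$ is $\SS^1$-invariant and $u_k$ is harmonic, one computes
\[
\Delta_{g_0}|u_k|^2 \;=\; 2|\nabla u_k|_{g_0}^2 \;\geq\; 2|Vu_k|^2 \;=\; 2c_k^2\,|u_k|^2, \qquad c_k := 2\pi|k|/L,
\]
so $|u_k|^2$ descends to a nonnegative subsolution of $\Delta_B - 2c_k^2$ on the flat exterior.

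To turn this into pointwise exponential decay, I would use a barrier on an annulus $\{R_0 \leq r \leq R\}$:
\[
\Psi_R(r) \;=\; A_1\,e^{-c_k(r-R_0)/2} \;+\; A_2\,e^{c_k(r-R)/2}.
\]
A direct computation gives $(\Delta_B - 2c_k^2)\Psi_R \leq 0$ on $\{r > R_0\}$ (the condition $R_0 > 100L$ ensures this uniformly in $k\neq 0$). Choosing $A_1 \geq \max_{r=R_0}|u_k|^2$ and $A_2 \geq \max_{r=R}|u_k|^2$, the latter being polynomial in $R$ by the $C_\tau^{2,\alpha}$ hypothesis, the weak maximum principle for $\Delta_B - 2c_k^2$ yields $|u_k|^2 \leq \Psi_R$ on the annulus. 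Letting $R\to\infty$ the growing term vanishes pointwise since exponential decay beats polynomial growth, producing $|u_k(r,\cdot)| \leq (\max_{r=R_0}|u_k|)\,e^{-c_k(r-R_0)/4}$. Combining with the Fourier coefficient bound $\max_{r=R_0}|u_k| \leq C/k^2$ from the $C^{2,\alpha}$-regularity of $u$ along fibers and summing gives $|u^\perp(x)| \lesssim e^{-\pi(r-R_0)/(2L)}$ for $r$ large enough, which is exponential and in particular decays faster than any polynomial.

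The main difficulty is that the principal $\SS^1$-bundle may be nontrivial (as for Hopf-type ends), so each $u_k$ is not a global function on $M$ but a section of the complex line bundle $L^{\otimes k}$ over the base. This is sidestepped by working throughout with the gauge-invariant scalar $|u_k|^2$, for which the subsolution inequality and the scalar maximum principle apply directly on the base; this bypasses the need to analyze the horizontal connection Laplacian with its curvature corrections. A secondary technical point is that $R_0$ must be taken large compared with $L/|k|$ for the barrier to be a supersolution uniformly in $k$, which is automatic from the standing assumption $R_0>100L$.
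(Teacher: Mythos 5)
Your proof is correct in its essentials, but it follows a genuinely different route from the paper. The paper never Fourier-decomposes along the fiber: it proves an approximate mean value property (\Cref{lem:mvp}) by a blow-up/compactness argument at scale $\rho(x)^\nu$, and then exploits only the fact that $u^\perp$ has zero fiber average, so that the integral of $u^\perp$ over a large ball in the cover is controlled by the supremum of $u^\perp$ times the fiber length; dividing by the volume $\rho^{\nu(m+1)}$ gains a factor $\rho^{-\nu}$, and an induction improving the decay by roughly $1/2$ at each step yields decay faster than any polynomial. Your argument instead isolates each nonzero Fourier mode $u_k$, uses the exact isometric $\mathbb{S}^1$-action on $\{g=g_0\}$ to get $Vu_k=(2\pi i k/L)u_k$, derives the spectral-gap inequality $\Delta_{g_0}|u_k|^2\ge 2c_k^2|u_k|^2$ for the invariant scalar $|u_k|^2$ (which legitimately descends to the flat base even when the bundle is nontrivial, as you note), and closes with exponential barriers and the comparison principle, then sums the modes using the $1/k^2$ coefficient bound. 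This buys a strictly stronger conclusion — genuine exponential decay $|u^\perp|\lesssim e^{-c r}$ rather than super-polynomial decay — and is arguably more transparent, at the cost of leaning harder on the exact product/bundle structure of $g_0$ at infinity (which \Cref{ass:fl} does grant), whereas the paper's blow-up argument is softer and closer in spirit to the techniques reused later in \Cref{lem:de}. One small quantitative slip: the supersolution inequality for your barrier requires $r\gtrsim (m-1)/c_k$, i.e. $r\ge C(m)L$ in the worst case $|k|=1$, which does not follow from $R_0>100L$ alone when $m$ is large; this is harmless — start the comparison at a radius $R_1=R_1(m,L)\ge R_0$, which leaves the asymptotic statement untouched — but it should be stated that way rather than attributed to the standing assumption $R_0>100L$.
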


The main technical step of the proof is the following lemma.

\begin{lemma}
Given $0 < \nu < 1$, there exists $\rho_0 = \rho_0(M, \nu)>0$ such that for any $x \in M \setminus K$ with $\rho(x) \ge \rho_0$, on a cover $\pi(B_{2\rho(x)^\nu}(x)) \times \RR$ of $\pi^{-1}(\pi(B_{2\rho(x)^\nu}(x)))$ and $u \in C_\tau^{2,\alpha}(M)$ a harmonic function,
\[
    \left| u^\perp(x) \right| \le \frac{2}{\omega_{m+1} (\rho(x))^{\nu(m+1)}} \left| \int_{B_{\rho(x)^\nu}^{\text{cover}}(x)} u^\perp \right|,
\]
where $B_{\rho(x)^\nu}^{\text{cover}} \subset \pi(B_{2\rho(x)^\nu}(x)) \times \RR$ is a ball of radius $\rho(x)^\nu$ in $\pi(B_{2\rho(x)^\nu}(x)) \times \RR$ centered at $(x,0)$ and $\omega_{m+1}$ is the volume of unit ball $B_1^{m+1}(0) \subset \RR^{m+1}$.
\label{lem:mvp}
\end{lemma}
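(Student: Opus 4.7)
The plan is to lift the situation to the $\mathbb{Z}$-cover of the ALF end over $\pi(B_{2R}(x))$, rescale so that the ball of radius $R=\rho(x)^{\nu}$ becomes a unit ball, and reduce the inequality to a perturbed Euclidean mean value property on $B_{1}\subset\mathbb{R}^{m+1}$.

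First, I would enlarge $\rho_{0}$ so that $\rho(x)\ge\rho_{0}>R_{0}+2R$; then \Cref{ass:fl} ensures $g=g_{0}$ on $B_{2R}(x)$, and on the cover $\tilde U=\pi(B_{2R}(x))\times\mathbb{R}$ the lifted model metric takes the explicit form $\tilde g_{0}=|dy|^{2}+(ds+A)^{2}$ in global coordinates $(y,s)\in\mathbb{R}^{m}\times\mathbb{R}$, where $A$ is the pullback of a connection one-form on $\mathbb{S}^{m-1}$ via $y\mapsto y/|y|$ and hence satisfies $|A|_{y}=O(|y|^{-1})$ together with analogous decay of its derivatives. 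Rescaling distances by $R^{-1}$ and setting $h:=R^{-2}\tilde g_{0}$, the ball $B_{R}^{\tilde g_{0}}((x,0))$ becomes the unit ball $B_{1}^{h}$, and since $|y|\simeq\rho(x)$ throughout this ball and $\nu<1$, one checks that $\|h-g_{\mathrm{Eucl}}\|_{C^{k}(B_{1}^{h})}=O(\rho(x)^{-1})\to 0$ for every $k$. The lift $\tilde u^{\perp}$ remains $h$-harmonic on $B_{1}^{h}$ and is $\mathbb{Z}$-periodic in $\tilde s$ with period $L/R\to 0$ and zero $\tilde s$-average per period.

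For $h=g_{\mathrm{Eucl}}$ the classical mean value property gives the exact identity $v(0)=\omega_{m+1}^{-1}\int_{B_{1}^{\mathrm{Eucl}}}v\,dV_{\mathrm{Eucl}}$ for every Euclidean harmonic $v$, which already yields the conclusion with $2$ replaced by $1$. I would then establish a perturbed mean value inequality of the form
\[
|v(0)|\le \tfrac{2}{\omega_{m+1}}\Bigl|\int_{B_{1}^{h}}v\,d\mathrm{vol}_{h}\Bigr|
\]
for every $h$-harmonic $v$ whenever $\|h-g_{\mathrm{Eucl}}\|_{C^{2}(B_{1})}$ is small enough; undoing the rescaling then gives the statement of the lemma and fixes the threshold $\rho_{0}$.

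The main obstacle is this perturbed mean value inequality, because the naive expansion of the $h$-Laplacian around the Euclidean one yields an error of size $\|h-g_{\mathrm{Eucl}}\|_{C^{2}}\cdot\|v\|_{L^{\infty}}$, which can be much larger than $|\int v|$ for a function that oscillates in the fiber direction. The resolution exploits the special structure of $\tilde u^{\perp}$ on the rescaled cover: it is $L/R$-periodic in $\tilde s$ with zero period-average. A Fourier decomposition in $\tilde s$ reduces the analysis to the nonzero modes, each satisfying $\Delta_{\tilde y}a_{n}\approx(2\pi n R/L)^{2}a_{n}$ to leading order, and therefore decaying exponentially from $\partial B_{1}^{h}$ inward with rate $2\pi R/L\to\infty$. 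This forces $|\tilde u^{\perp}(x)|$ to be exponentially small compared to $|\int_{B_{1}^{h}}\tilde u^{\perp}\,d\mathrm{vol}_{h}|$ (the latter controlled by partial-period boundary effects of size $\sim L/R$), so the factor of $2$ is comfortably achieved for every $\rho(x)\ge\rho_{0}$, with $\rho_{0}$ depending only on $M$ and $\nu$.
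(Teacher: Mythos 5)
There is a genuine gap at the final step. Your strategy delivers only \emph{upper} bounds on both sides of the desired inequality: the Fourier-mode/exponential-decay argument bounds $|\tilde u^{\perp}(x)|$ from above (by $e^{-cR/L}$ times boundary data, $R=\rho(x)^{\nu}$), and the ``partial-period boundary effects'' bound $|\int_{B^{\mathrm{cover}}_{R}}u^{\perp}|$ from above by $\sim (L/R)\cdot R^{m}\sup|u^{\perp}|$. But the inequality of \Cref{lem:mvp} requires a \emph{lower} bound on the signed integral in terms of the center value, and nothing in your argument rules out that the integral is anomalously small, or exactly zero, by cancellation while $u^{\perp}(x)\neq 0$. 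This is not a removable technicality: already in the flat model $\RR^{m}\times S^{1}$, taking $u^{\perp}=G(y)\cos(2\pi s/L)$ with $\Delta G=(2\pi/L)^{2}G$ decaying, the ball integral $\int_{B_{R}}G(y)\cos(2\pi s/L)$ reduces to $\int G(y)\,\tfrac{L}{\pi}\sin\bigl(2\pi\sqrt{R^{2}-|y-y_{x}|^{2}}/L\bigr)\,dy$, which oscillates in sign as the radius varies, so a purely local argument cannot force the center value below twice the normalized integral. Your intermediate ``perturbed mean value inequality for every $h$-harmonic $v$'' is likewise false as stated, as you yourself essentially observe; the Fourier fix addresses the size of $v(0)$ but not the direction of the comparison with $\int v$. (A secondary point: the claimed $O(\rho(x)^{-1})$ closeness of the rescaled metric to the Euclidean one should be rechecked; the relevant smallness comes from $\nu<1$ and the rates are of the form $\rho^{\nu-1}$, $\rho^{2(\nu-1)}$, though this does not affect the main issue.)

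For contrast, the paper avoids needing any lower bound on the integral by arguing by contradiction: if the inequality fails along a sequence $x_{k}\to\infty$, one rescales by $\rho(x_{k})^{-\nu}$, normalizes by the local scale-invariant $C^{2}$ data, extracts a limit harmonic for the Euclidean Laplacian, and plays the \emph{exact} Euclidean mean value identity against the assumed reverse inequality (which is now a hypothesis along the sequence); the factor $2$ is exactly the slack that survives the limit. Your exponential-decay estimate is nevertheless valuable: if completed (the modes do not exactly decouple for the perturbed metric, but a comparison/Agmon-type argument works), it yields $|u^{\perp}(x)|\lesssim (L/R)\sup|u^{\perp}|$, which is precisely the combination of \Cref{lem:mvp} with the fiber-cancellation bound that the paper actually uses in the proof of \Cref{prop:gr}, and in a quantitatively stronger form. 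So your approach can be salvaged by targeting that statement directly, but as written it does not prove \Cref{lem:mvp}.
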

\begin{proof}
For the sake of contradiction, suppose not.
Then we have a sequence $x_k \in M \setminus K$ with $\rho(x_k) \to \infty$ that violate the desired inequality. By unique continuation, nonzero  harmonic functions cannot vanish locally, so we may assume that $u^\perp(x_k) \ne 0$ by perturbations if needed.
Define $v_k \colon B_1^m(0) \times \RR \to \RR$ as 
\[
    v_k(y_0, y_1) = \dfrac{u^\perp(x_k+\rho(x_k)^\nu y_0, \rho(x_k)^\nu y_1)}{\sup_{B_{\frac{3}{2}\rho(x_k)^\nu}} \left(u^\perp, \rho(x_k)^\nu |\nabla u^\perp|, \rho(x_k)^{2\nu}|\nabla^2 u^\perp|\right)}
\]
so that $\Delta_{g_k}v_k = 0$ where $g_k$ is a metric on $B_2^m(0) \times \RR$ given by dilating $\Phi^\ast g$ around $x_k$ by a factor of $\rho(x)^{-\nu}$ and translating $x_k$ to the origin. Note that this procedure is not canonical as the metric depends on the choice of coordinates to start with. However, as $0<\nu < 1$, the metric $g_k$ converges to the standard Euclidean metric, and therefore $\Delta_{g_k}$ converges to the standard Laplacian $\Delta_{\text{euc}}$ on $\RR^{m+1}$.

Note that as $\sup_{B_{3/2}}(v_k, |\nabla v_k|, |\nabla^2 v_k|) = 1$ and $v_k \in C^{2,\alpha}(B_{2}(0) \times \RR)$, there exists a subsequence that converges to a nontrivial $v_\infty \in C^{2,\alpha/2}(B_{\frac{3}{2}}(0) \times (-2,2))$.
Since $\Delta_{g_k}$ converges to $\Delta_{\text{euc}}$ on $B_{\frac{3}{2}}(0) \times (-2,2)$, $\Delta_\text{euc} v_\infty = 0$. But then by mean value property of harmonic functions on Euclidean space, 
\[
    v_\infty(0) = \dfrac{1}{\omega_{m+1}}\int_{B_1 \subset B^m_{\frac{3}{2}}(0) \times (-2,2)} v_\infty,
\]
which contradicts the facts that by construction for all $k$,
\[
    \left| v_k(0) \right| > \frac{2}{\omega_{m+1}} \left| \int_{B_1 \subset B^m_{2}(0) \times \RR} v_k \right|,
\]
and that $v_\infty$ is the limit of $v_k$. Here, if $v_\infty(0) = 0$ then we do not get the contradiction immediately, but we may slightly perturb the origin to get a contradiction using the fact that $v_\infty$ is a nontrivial harmonic function and $v_k$ uniformly converges to $v_\infty$ in $C^{2,\alpha/2}$.
\end{proof}

\begin{proof}[Proof of \Cref{prop:gr}]
We prove the statement when $\tau > 0$ first.
We proceed by induction. 
First, assume that $0<\kappa<1$ and take $\nu = \frac{1+\kappa}{2}$ in \Cref{lem:mvp} to get $\rho_0>0$.
Note that as $\tau > 0$, $\left| \sup u^\perp \right| < +\infty$.

We want an estimate on $u^\perp(x)$ for $x \in M \setminus K$ with $\rho(x) \ge \rho_0$.
Note that $u^\perp$ integrates to 0 on each fiber, and therefore, if we let $p \colon B_{\rho(x)^\nu}^{\text{cover}} \to \pi (B_{2\rho(x)^\nu}(x))$ be the projection onto the second argument, then 
\begin{equation}
    \left| \int_{p^{-1}(s)} u^\perp \right| \le L|\sup u^\perp|
    \label{eqn:ce}
\end{equation}
for each $s \in \pi(B_{2\rho(x)^\nu}(x))$.
Hence, by \Cref{lem:mvp}, 
\begin{align*}
    \left| u^\perp(x) \right|
    &\le \frac{2}{\omega_{m+1}(\rho(x))^{\nu(m+1)}} \left| \int_{B_{\rho(x)^\nu}^{\text{cover}}} u^\perp \right|\\
    &\le \dfrac{2}{\omega_{m+1} (\rho(x))^{\nu(m+1)} }\cH^m(B_{2\rho(x)^\nu}(x))L\left|\sup u^\perp\right|\\
    &= O(\rho(x)^{-\nu}) = O(\rho(x)^{-(1+\kappa)/2}).
\end{align*}
In particular, as $(1+\kappa)/2 > \kappa$, we conclude that $u^\perp = o(\rho^{-\kappa})$.

Now assume that for an integer $n$, we know $u^\perp = o(\rho^{-\kappa_n})$ for any positive integer $n$ and $\dfrac{n-1}{2} < \kappa_n < \dfrac{n}{2}$.
For $\dfrac{n}{2} < \kappa_{n+1} < \dfrac{n+1}{2}$, the same argument with one modification that instead of \eqref{eqn:ce} we use 
\[
    \left| \int_{p^{-1}(s)} u^\perp \right| \le L C_{\kappa_{n+1}-\frac{1}{2}}\rho(x)^{\frac{1}{2}-\kappa_{n+1}}
\]
for a constant $C_{\kappa_{n+1}-\frac{1}{2}}>0$, which holds by the induction hypothesis, with $\nu = \frac{1}{2}$ give $u^\perp = O(\rho^{-\kappa_{n+1}})$. 

The proof when $\tau <0$ is similar. Instead of arguing with $|\sup u^\perp| < +\infty$, we use $|u(x)| \le C\norm {u}_{C_\tau^{2,\alpha}(M)} \rho(x)^{-\tau}$ and by the same induction argument we decrease the exponent by $\frac{1}{2}$ repeatedly.
\end{proof}

The arguments so far ultimately show that the polynomial growth rate of harmonic functions is controlled by $u^\pi$, which reduces to the understanding of harmonic functions in the Asymptotically Locally Euclidean (ALE) setting, particularly in the model case of Euclidean space.
Since the literature on this topic is readily available, we list some facts we intend to use below without proof.

\begin{proposition}
    Let $\cH_k(\RR^m)$ denote the space of homogeneous harmonic polynomials of degree $k$ on $\RR^m$, and define 
    \[
        \cH_{\le k}(\RR^m) := \bigoplus_{1 \le l \le k} \cH_l(\RR^m).
    \]
    Then every harmonic function on $\RR^m$ of at most polynomial growth is a harmonic polynomial, and we have 
    \[
        \dim \cH_k(\RR^m) = \binom{m+k-1}{m-1} - \binom{m+k-3}{m-1}.
    \]
    In particular, both $\cH_k(\RR^m)$ and $\cH_{\le k}(\RR^m)$ are finite-dimensional.
    \label{prop:ale1}
\end{proposition}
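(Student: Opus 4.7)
The plan is to prove the two statements independently: (i) the Liouville-type assertion that any polynomially bounded harmonic function on $\RR^m$ is a harmonic polynomial, and (ii) the dimension count for $\cH_k(\RR^m)$, from which finite-dimensionality of $\cH_{\le k}$ follows immediately by summation.

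For (i) I would invoke the standard interior derivative estimates for harmonic functions: if $u$ is harmonic on $B_R(x_0)\subset\RR^m$, then there exists $C_{|\alpha|}>0$ with
\[
|\partial^\alpha u(x_0)| \le \frac{C_{|\alpha|}}{R^{|\alpha|}}\sup_{B_R(x_0)}|u|.
\]
These can be proved by iterating the mean value property (or by differentiating the Poisson kernel representation on $B_R(x_0)$). Assuming $|u(x)|\le C(1+|x|)^N$ on all of $\RR^m$, for any fixed $x_0$ and any multi-index $\alpha$ with $|\alpha|>N$, applying the bound on $B_R(x_0)$ and sending $R\to\infty$ yields $\partial^\alpha u(x_0)=0$. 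Since $x_0$ is arbitrary, $u$ is a polynomial of degree at most $N$, and being harmonic it lies in $\cH_{\le N}(\RR^m)$.

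For (ii) I would establish the direct sum decomposition $\cP_k(\RR^m)=\cH_k(\RR^m)\oplus r^2\cP_{k-2}(\RR^m)$, where $\cP_k$ denotes homogeneous polynomials of degree $k$ and $r^2=x_1^2+\cdots+x_m^2$. The cleanest route is the Fischer inner product on $\cP_k$ defined by $\langle P,Q\rangle := (P(\partial)\overline{Q})(0)$; under this pairing the operator $\Delta\colon \cP_k(\RR^m)\to\cP_{k-2}(\RR^m)$ is the adjoint of multiplication by $r^2$, which is manifestly injective, so $\Delta$ is surjective with kernel $\cH_k$ and orthogonal complement $r^2\cP_{k-2}$. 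The claimed dimension formula then follows from the standard count $\dim\cP_k(\RR^m)=\binom{m+k-1}{m-1}$:
\[
\dim\cH_k(\RR^m)=\dim\cP_k(\RR^m)-\dim\cP_{k-2}(\RR^m)=\binom{m+k-1}{m-1}-\binom{m+k-3}{m-1}.
\]

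There is no real analytic obstacle here; the statements are classical. The only mild care needed is that the Fischer pairing argument, while short, would require introducing formalism not yet used in the paper. A purely self-contained alternative would be to verify the decomposition $\cP_k=\cH_k\oplus r^2\cP_{k-2}$ by a dimension chase: show $\Delta\colon r^2\cP_{k-2}\to\cP_{k-2}$ is injective (using that $\Delta(r^2 Q)=2m Q+4r\partial_r Q+r^2\Delta Q$ has nontrivial leading term), conclude $\cH_k\cap r^2\cP_{k-2}=0$, and then use surjectivity of $\Delta\colon\cP_k\to\cP_{k-2}$ proved by induction on $k$ from the previous identity. Either route keeps the proposition proof short and avoids invoking $SO(m)$-representation theory.
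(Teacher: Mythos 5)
Your proof is correct; note, however, that the paper itself offers no argument to compare against: Proposition \ref{prop:ale1} is explicitly listed among facts quoted ``without proof'' from the standard literature, so you are supplying a proof where the authors chose a citation. What you propose is the classical self-contained route, and both halves are sound. For the Liouville part, the interior derivative estimates $|\partial^\alpha u(x_0)|\le C_{|\alpha|}R^{-|\alpha|}\sup_{B_R(x_0)}|u|$ combined with $|u|\le C(1+|x|)^N$ do kill all derivatives of order $>N$ as $R\to\infty$, so $u\in\cH_{\le N}(\RR^m)$. For the dimension count, the decomposition $\cP_k(\RR^m)=\cH_k(\RR^m)\oplus r^2\cP_{k-2}(\RR^m)$ via the Fischer pairing (where $\Delta$ is adjoint to multiplication by $r^2$, hence surjective with kernel $\cH_k$) immediately gives $\dim\cH_k=\dim\cP_k-\dim\cP_{k-2}$, which is the stated formula. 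One small polish on your ``elementary'' alternative: by Euler's identity the key computation reads $\Delta(r^2Q)=\bigl(2m+4(k-2)\bigr)Q+r^2\Delta Q$ for $Q\in\cP_{k-2}$, and injectivity of $Q\mapsto\Delta(r^2Q)$ is cleanest not via a ``leading term'' remark but by iteration: if $\Delta(r^2Q)=0$ then $Q$ is a constant multiple of $r^2\Delta Q$, hence divisible by $r^2$, and repeating forces $Q$ to be divisible by arbitrarily high powers of $r^2$, so $Q=0$; surjectivity of $\Delta\colon\cP_k\to\cP_{k-2}$ then follows by dimension count. With that phrasing either route is complete, and the advantage of yours over the paper's citation is simply self-containedness, in the same spirit as the paper's stated goal of avoiding black boxes elsewhere.
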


\begin{proposition}
    Let $\cH_k(\RR^m \setminus \{0\})$ be the space of homogeneous harmonic functions of degree $k$ on $\RR^m \setminus \{0\}$. Then
    \[
        \cH_k(\RR^m \setminus \{0\}) =
        \begin{cases}
            \cH_k(\RR^m) &\text{ for } k \ge 0,\\
            \cH_{k+m-2} (\mathbb{S}^{m-1}) &\text{ for } k \le 2-m,\\
            \{0\} &\text{ otherwise},
        \end{cases}
    \]
    where $\cH_k(\SS^{m-1})$ is the eigenspace of the spherical Laplacian on $\SS^{m-1}$ with eigenvalue $k(k+m-2)$.
    \label{prop:ale2}
\end{proposition}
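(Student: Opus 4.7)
The plan is to use separation of variables. Any homogeneous function of degree $k$ on $\RR^m\setminus\{0\}$ takes the form $u(x) = r^k \phi(\omega)$, where $r=|x|$, $\omega = x/|x| \in \SS^{m-1}$, and $\phi := u|_{\SS^{m-1}}$. First I would expand the Laplacian in polar coordinates,
\[
\Delta u = \partial_r^2 u + \frac{m-1}{r}\partial_r u + \frac{1}{r^2}\Delta_{\SS^{m-1}}u,
\]
and substitute $u = r^k\phi$ to get
\[
\Delta u = r^{k-2}\bigl(k(k+m-2)\phi + \Delta_{\SS^{m-1}}\phi\bigr).
\]
Hence $u \in \cH_k(\RR^m\setminus\{0\})$ if and only if $\phi$ is an eigenfunction of $-\Delta_{\SS^{m-1}}$ with eigenvalue $k(k+m-2)$.

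Next I would invoke the classical spectral theorem on $\SS^{m-1}$: the spectrum of $-\Delta_{\SS^{m-1}}$ is $\{\,j(j+m-2) : j \in \ZZ_{\ge 0}\,\}$, with $j$-th eigenspace the space of degree-$j$ spherical harmonics. The equation $j(j+m-2) = k(k+m-2)$, viewed as a quadratic in $j$, has roots $j = k$ and $j = -(k+m-2)$. Three cases then arise. When $k \ge 0$, the non-negative root is $j=k$, so $\phi \in \cH_k(\SS^{m-1})$; since such $\phi$ are restrictions of degree-$k$ harmonic polynomials, $u=r^k\phi$ extends smoothly through the origin to an element of $\cH_k(\RR^m)$, giving the canonical identification. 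When $k \le 2-m$, the non-negative root is $j = -(k+m-2) = 2-m-k$, and we recover the eigenspace written in the paper's notation as $\cH_{k+m-2}(\SS^{m-1})$. Finally, when $2-m < k < 0$, both roots are negative, so $k(k+m-2) < 0$ lies outside the spectrum of $-\Delta_{\SS^{m-1}}$, forcing $\phi = 0$ and hence $\cH_k(\RR^m\setminus\{0\}) = \{0\}$.

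I do not anticipate any significant obstacle: once the polar-coordinate computation is in hand, the result reduces entirely to the standard spectral decomposition on $\SS^{m-1}$, which I take as classical background. The one bookkeeping point worth verifying is the smooth extension at the origin in the case $k \geq 0$, which is immediate from the fact that degree-$k$ spherical harmonics are exactly the restrictions to $\SS^{m-1}$ of degree-$k$ harmonic polynomials on $\RR^m$.
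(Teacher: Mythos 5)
Your argument is correct, and in fact the paper offers no proof at all: this proposition is explicitly listed as a classical fact ``without proof,'' so the standard separation-of-variables argument you give is exactly what is being invoked. The polar-coordinate computation, the identification of the spectrum of $-\Delta_{\SS^{m-1}}$ as $\{j(j+m-2): j\in\ZZ_{\ge 0}\}$ with degree-$j$ spherical harmonics as eigenspaces, the two roots $j=k$ and $j=2-m-k$ of the indicial equation, and the smooth extension through the origin for $k\ge 0$ (via the fact that degree-$k$ spherical harmonics are restrictions of degree-$k$ harmonic polynomials) are all in order, as is the observation that $k(k+m-2)<0$ for $2-m<k<0$ lies below the spectrum. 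One small bookkeeping remark: for $k\le 2-m$ your computation identifies the eigenspace as the degree-$(2-m-k)$ spherical harmonics, i.e.\ the eigenspace with eigenvalue $k(k+m-2)$; this is the space the paper intends, although its subscript $k+m-2$ is the negative of the spherical-harmonic degree, so when citing the paper's notation it is worth noting that the label should be read as $-(k+m-2)=2-m-k$ (equivalently, the eigenvalue $k(k+m-2)$ in the paper's own indexing convention). This does not affect the substance of your proof.
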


Accordingly, we define the set of exceptional values $\Lambda = \ZZ \setminus (0, m-2)$ to be the set of all possible polynomial decays of harmonic functions on $\RR^m \setminus \{0\}$. Then by our previous arguments, the exceptional values of ALF manifolds are also $\Lambda = \ZZ \setminus (0, m-2)$.

\begin{proposition}
Let $\tau_1 < \tau_2$ be two real numbers that are not included in $\Lambda$. 
If $u \in C_{\tau_1}^{2,\alpha}(\RR^m \setminus \overline{B_1(0)})$ and $\Delta u \in C_{\tau_2}^{0,\alpha}(\RR^m \setminus \overline{B_1(0)})$, then for each $k \in (\tau_1, \tau_2) \cap \Lambda$, there exists $h_k \in \cH_k(\RR^m \setminus \{0\})$ such that 
\[
    u - \sum_{k \in (\tau_1,  \tau_2) \cap \Lambda} h_k \in C_{\tau_2}^{2,\alpha}(\RR^m \setminus \overline{B_1(0)}).
\]
\label{prop:ale3}
\end{proposition}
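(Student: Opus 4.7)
The plan is a classical separation-of-variables analysis on the annular end $\RR^m \setminus \overline{B_1(0)}$, solving the Laplace equation mode-by-mode in spherical harmonics and then reassembling. Expand
\[
u(r,\omega) = \sum_{\ell \ge 0} \sum_{j} u_{\ell,j}(r) Y_{\ell,j}(\omega), \qquad f := \Delta u = \sum_{\ell, j} f_{\ell, j}(r) Y_{\ell, j}(\omega),
\]
in an orthonormal basis $\{Y_{\ell,j}\}_j$ of $\cH_\ell(\SS^{m-1})$, normalized so that $\Delta_{\SS^{m-1}} Y_{\ell, j} = -\ell(\ell + m - 2) Y_{\ell, j}$. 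Using the Euclidean identity $\Delta = \partial_r^2 + \tfrac{m-1}{r}\partial_r + r^{-2}\Delta_{\SS^{m-1}}$ on the end, each coefficient satisfies the Euler-type ODE
\[
L_\ell u_{\ell, j} := u_{\ell, j}'' + \tfrac{m-1}{r} u_{\ell, j}' - \tfrac{\ell(\ell + m - 2)}{r^2} u_{\ell, j} = f_{\ell, j}(r),
\]
whose indicial roots $\ell$ and $2 - m - \ell$ give the fundamental solutions $r^\ell$ and $r^{2-m-\ell}$. Paired with $Y_{\ell, j}$ these are exactly the elements of $\cH_\ell(\RR^m)$ and $\cH_{2-m-\ell}(\RR^m \setminus \{0\})$ from \Cref{prop:ale1} and \Cref{prop:ale2}, and as $\ell$ ranges over $\ZZ_{\ge 0}$ the set of indicial exponents traces out exactly $\Lambda$.

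Next I would use variation of parameters. The Wronskian of $r^\ell$ and $r^{2-m-\ell}$ is $-(2\ell + m - 2)r^{1-m}$, so the general solution of $L_\ell u_{\ell, j} = f_{\ell, j}$ has the form
\[
u_{\ell, j}(r) = A_{\ell, j} r^\ell + B_{\ell, j} r^{2 - m - \ell} + \mathrm{I}_{\ell, j}(r),
\]
where $\mathrm{I}_{\ell, j}$ is an explicit double integral of $f_{\ell, j}$ against the Green kernel of $L_\ell$. The endpoint of each of the two integrals in $\mathrm{I}_{\ell, j}$ is chosen according to whether the associated fundamental solution lies inside or outside the target class $C_{\tau_2}^{2,\alpha}$: one integrates from $\infty$ precisely when the corresponding indicial root would otherwise contribute too slow a decay, and from a fixed radius $r_0$ otherwise. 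With that convention, $\mathrm{I}_{\ell, j}$ inherits the sharp weighted Hölder estimate predicted by $f_{\ell, j}$, with no logarithmic correction precisely because the hypothesis $\tau_1, \tau_2 \notin \Lambda$ rules out resonance between the weight and any indicial root. The constants $A_{\ell, j}, B_{\ell, j}$ are then determined by the assumption $u \in C_{\tau_1}^{2,\alpha}$: any fundamental solution decaying strictly faster than the target weight can be harmlessly absorbed into $\mathrm{I}_{\ell, j}$ by uniqueness, while those whose decay rates fall in the gap $(\tau_1, \tau_2) \cap \Lambda$ must be retained and produce a single element of $\cH_k(\RR^m \setminus \{0\})$ for each such $k$.

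Finally, to assemble the full expansion, observe that the harmonic part is a genuine finite sum because $(\tau_1, \tau_2) \cap \Lambda$ is a finite set and each $\cH_k(\RR^m \setminus \{0\})$ is finite-dimensional by \Cref{prop:ale1} and \Cref{prop:ale2}. Convergence of the remainder series $\sum_{\ell, j} \mathrm{I}_{\ell, j}(r) Y_{\ell, j}(\omega)$ in $C_{\tau_2}^{2,\alpha}$ follows from the uniform-in-$\ell$ ODE bounds, combined with rescaled interior Schauder estimates on dyadic annuli $\{R \le r \le 2R\}$ and the rapid decay in $\ell$ of the spherical-harmonic coefficients of a function of finite weighted $C^{2,\alpha}$-norm (itself a consequence of the spectral gap on $\SS^{m-1}$). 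The main obstacle I expect is the endpoint bookkeeping in the second step: one must verify mode-by-mode that the chosen integration limits really produce the claimed decay without a $\log r$ factor, and that the resulting constants depend at most polynomially on $\ell$ so they can be absorbed into the spherical-harmonic tails when summing.
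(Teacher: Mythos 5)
The paper does not actually prove this proposition: it is listed among the facts about harmonic functions on $\RR^m\setminus\{0\}$ that are stated ``without proof'' and taken from the standard literature on weighted spaces on Euclidean ends (Bartnik, Lockhart--McOwen, etc.), so there is no argument in the paper to compare yours against. Your separation-of-variables strategy is precisely the classical route to this result, and most of the steps are sound: the indicial roots $\ell$ and $2-m-\ell$, the Wronskian, the variation-of-parameters solution with integration endpoints chosen mode by mode, the absence of logarithmic terms because $\tau_1,\tau_2\notin\Lambda$, and the fact that the retained homogeneous solutions produce exactly one element of $\cH_k$ for each exceptional rate $k\in(\tau_1,\tau_2)\cap\Lambda$ (a finite set). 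The constants in the ODE estimates indeed behave well in $\ell$, since the distance from $\tau_2$ to the indicial roots of $L_\ell$ grows with $\ell$.

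There is, however, one genuine gap as written: the final summation step. You justify convergence of $\sum_{\ell,j}\mathrm{I}_{\ell,j}(r)Y_{\ell,j}(\omega)$ in $C^{2,\alpha}_{\tau_2}$ by invoking ``rapid decay in $\ell$ of the spherical-harmonic coefficients of a function of finite weighted $C^{2,\alpha}$-norm\ldots a consequence of the spectral gap.'' This is false: the spectral gap by itself gives no decay, and two derivatives plus H\"older control only yield coefficient decay like a fixed negative power of $\ell$ (roughly $\ell^{-2-\alpha}$), while the dimension of $\cH_\ell(\SS^{m-1})$ and the sup-norms of the $Y_{\ell,j}$ grow polynomially in $\ell$; so absolute, term-by-term summation in $C^0$, let alone $C^{2,\alpha}$, does not follow from the stated hypothesis. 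The standard repair is to run the mode analysis only in an $L^2(\SS^{m-1})$ sense (Parseval makes the summation trivial), concluding that the remainder $v:=u-\sum_k h_k$ satisfies a weighted $L^2$-on-spheres (or $L^\infty$-on-annuli) bound of order $\rho^{-\tau_2}$, and then to upgrade this to the weighted $C^{2,\alpha}$ estimate using the equation $\Delta v=\Delta u$ together with the rescaled interior Schauder estimates on dyadic annuli that you already mention; alternatively, one treats all sufficiently high modes collectively, noting that for large $\ell$ no indicial root lies in $[\tau_1,\tau_2]$, so the high-mode projection of $u$ is controlled directly without any mode-by-mode bookkeeping. With that replacement your argument goes through; as a minor point, note also that with the paper's weight convention the natural hypothesis is $\Delta u\in C^{0,\alpha}_{\tau_2+2}$ (consistent with $\Delta_g\colon C^{2,\alpha}_\tau\to C^{0,\alpha}_{\tau+2}$), and your variation-of-parameters step implicitly uses this gain of two orders.
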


\subsection{Main Estimates}

\begin{proposition}
    Let $(M,g)$ be an ALF manifold as in \Cref{ass:fl}. For $\tau \in \RR$, there exists a constant $C=C(M,\tau)>0$ such that 
    \[
        \norm{u}_{C_\tau^{2,\alpha}(M)} \le C(\norm{\Delta_gu}_{C_{\tau+2}^{0,\alpha}(M)}+\norm{u}_{C_\tau^{0}(M)})
    \]
    for all $u \in C_\tau^{2,\alpha}(M)$.
    \label{prop:ape}
\end{proposition}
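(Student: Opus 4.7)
The plan is to establish this weighted Schauder-type estimate via the classical rescaling argument, adapted to the ALF geometry by lifting to the universal cover of the end at each scale. The proof naturally splits into a compact regime and an asymptotic regime.

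For the compact regime $\{\rho \le 2R_0\}$, a finite cover by balls of uniform small size together with the classical interior Schauder estimate for $\Delta_g$ immediately gives the desired estimate on this region, since the weights $\rho^\tau,\rho^{\tau+1},\rho^{\tau+2}$ are all comparable to $1$ there.

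For the asymptotic regime, fix $x_0$ with $R:=\rho(x_0)\ge 2R_0$. Under the standing assumption $g=g_0=dr^2+r^2\sigma+\theta^2$ on a neighborhood of $x_0$, so I would work at the natural scale $R$ as follows: pick a small $\epsilon_0\in(0,1/4)$, consider the metric ball $B^g_{\epsilon_0 R}(x_0)$, lift it to the universal cover $\widetilde M$ of the end $M\setminus K$ (the $S^1$-fiber becomes $\RR$ and the ball becomes a topological ball $\widetilde B^{\tilde g}_{\epsilon_0 R}(\tilde x_0)$), and rescale the lifted metric by $\tilde g_R:=R^{-2}\tilde g$. A direct computation shows that $\tilde g_R$ on the unit-size ball $\widetilde B^{\tilde g_R}_{\epsilon_0}(\tilde x_0)$ converges smoothly to the flat Euclidean metric on $B_{\epsilon_0}\subset\RR^{m+1}$ as $R\to\infty$: the base factor $dr^2+r^2\sigma$ is locally Euclidean at scale $R$, and the lifted fiber becomes infinite. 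In particular, the family $\{\tilde g_R\}_{R\ge 2R_0}$ is uniformly elliptic with metric coefficients uniformly bounded in $C^{k,\alpha}$ for every $k$. The standard Euclidean interior Schauder for $\Delta_{\tilde g_R}$ then yields, with $C$ independent of $R$,
\[
\|\tilde u\|_{C^{2,\alpha}(\widetilde B^{\tilde g_R}_{\epsilon_0/2})}\le C\bigl(\|\Delta_{\tilde g_R}\tilde u\|_{C^{0,\alpha}(\widetilde B^{\tilde g_R}_{\epsilon_0})}+\|\tilde u\|_{C^0(\widetilde B^{\tilde g_R}_{\epsilon_0})}\bigr).
\]

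I would then translate this back via the scaling identities $|\nabla^{g,k}u|=R^{-k}|\widetilde\nabla^{\tilde g_R,k}\tilde u|$, $\Delta_{\tilde g_R}\tilde u=R^2\Delta_g u$, $[\,\cdot\,]_{C^{0,\alpha}_{\tilde g_R}}=R^\alpha[\,\cdot\,]_{C^{0,\alpha}_g}$, and the identifications $\tilde u(\tilde x_0)=u(x_0)$, $\|\tilde u\|_{C^0(\widetilde B)}=\|u\|_{C^0(B)}$ (the projection surjects the cover-ball onto $B$). Multiplying through by $R^\tau$ and using $\rho\asymp R$ on $B^g_{\epsilon_0 R}(x_0)$ absorbs all powers of $R$ into the weighted norms on the right-hand side. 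The H\"older seminorm produced this way lives on the inner ball $B^g_{\epsilon_0 R/2}(x_0)$, whereas the weighted norm uses $B^g_{\rho/2}(x_0)$; a routine split of pairs in $B^g_{\rho/2}$ into close ones (lying in a common small ball, controlled by the pointwise Schauder bound) and far ones (trivially small H\"older quotient, bounded by $R^{-\alpha}$ times a pointwise $|\nabla^2u|$ estimate) extends the control to the defining ball. Taking the supremum over $x_0$ with $\rho(x_0)\ge 2R_0$ and combining with the compact regime completes the proof.

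The main obstacle is that the natural rescaling $R^{-2}g$ collapses the $S^1$-fiber of fixed length $L$ as $R\to\infty$: the injectivity radius of the rescaled manifold at $x_0$ vanishes, so a bounded-geometry Schauder estimate is not uniformly available on $M$ itself. Passing to the universal cover of the end unrolls the fiber to $\RR$ and restores genuine bounded geometry on the rescaled model — indeed one converging smoothly to flat $\RR^{m+1}$ — which is what lets the Euclidean Schauder estimate run with $R$-uniform constants.
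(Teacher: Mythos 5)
Your overall strategy is the same as the paper's: interior Schauder estimates at the natural scale $\rho(x)$, made uniform by passing to a cover that unwinds the collapsing $S^1$ fiber, then weighting by $\rho^\tau$ and combining with a standard estimate on the compact core. (The paper organizes the sup over dyadic annuli with a Vitali covering rather than a pointwise sup over base points, but that difference is immaterial; your near/far splitting for the H\"older seminorm on $B_{\rho(x)/2}(x)$ is a fine way to handle the seminorm's domain.) However, there is one step that genuinely fails as written: you lift to \emph{the universal cover of the whole end} $M\setminus K$ and assert that there ``the $S^1$-fiber becomes $\RR$.'' This is false precisely in the most relevant ALF situations. For $m=3$ the total space $E$ of the circle bundle over $\SS^{2}$ with Euler number $e\neq 0$ is the lens space $S^3/\ZZ_{|e|}$; its universal cover is $S^3$ (for $e=\pm1$ the end is already simply connected, as for Taub--NUT/Taub--Bolt type asymptotics), and the fibers lift to the Hopf fibers of a Berger sphere whose length stays $\asymp L$. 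So even on the universal cover the injectivity radius remains bounded by $\asymp L$ while the scale is $R$; after rescaling by $R^{-2}$ the geometry still collapses (the pointed limit is the collapsed $\RR^{m}$, not $\RR^{m+1}$), and the claimed $R$-uniform $C^{k,\alpha}$ bounds on the metric coefficients, hence the uniform Schauder constant, do not follow from this lift.

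The repair is exactly the paper's choice of cover: do not unwind globally, but locally. The image $\pi(B)$ of your ball under the projection $\pi$ to $\RR^m\setminus \overline{B_1(0)}$ is simply connected, so the restricted circle bundle $\pi^{-1}(\pi(B))$ is trivial, and one passes to the cover $\pi(B)\times\RR$ (equivalently, the universal cover of the local piece $\pi^{-1}(\pi(B))\cong \pi(B)\times S^1$, which is all your argument ever uses). On that local cover the fiber genuinely becomes a line, the rescaled metric is uniformly close to Euclidean, and the rest of your argument — rescaling identities, transfer of sup and H\"older norms through the covering projection, multiplication by $R^\tau$, and the compact-region estimate — goes through verbatim and reproduces the paper's proof. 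So the gap is a wrong choice of cover rather than a wrong idea, but as stated the uniform bounded-geometry claim, which is the crux of the asymptotic regime, is not justified.
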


\begin{proof}[Proof of \Cref{prop:ape}]
    Define annuli $A_{a,b} := B^M_{2^bR_0}\setminus \overline{ B^M_{2^aR_0} }$ for $b>a\ge 1$. 
    Let $k \ge 3$ and $x \in A_{k-1,k+1}$. Then $\pi^{-1}(\pi(B_{2^{k-2}R_0}(x))) \subset M$ is a $S^1$-bundle over the simply connected domain $\pi(B_{2^{k-2}R_0}(x))$, so in particular it is a topologically trivial $S^1$-bundle.
    By applying interior elliptic estimate (c.f. \cite[Theorem 6.2]{GT}) on a cover $\pi(B_{2^{k-2}R_0}(x)) \times \RR$ of the bundle, we deduce 
    \begin{align}
        &\sup_{B^M_{2^{k-3}R_0}(x)} \left( |u| + (2^kR_0) |\nabla u| + (2^kR_0)^2 |\nabla^2 u| \right) + (2^kR_0)^{2+\alpha} \sup_{y \in B^M_{2^{k-3}R_0}(x)} \dfrac{|\nabla^2u(y)-\nabla^2u(z)|}{|y-z|^\alpha} \nonumber\\
        &\;\le C \left( \sup_{B_{2^{k-2}R_0}(x)} |u| + (2^kR_0)^2 \sup_{B_{2^{k-2}R_0}(x)} |\Delta_g u| + (2^kR_0)^{2+\alpha} \sup_{y \in B^M_{2^{k-2}R_0}(x)} \dfrac{|\Delta_g u(y)- \Delta_g u(z)|}{|y-z|^\alpha} \right). \label{eqn:ape1}
    \end{align}
    By Vitali covering lemma, we can find a finite cover $\{B^M_{2^{k-4}R_0}(x_i) : x_i \in A_{k-1,k+1}\}$ of $A_{k-1,k+1}$ such that each point is covered at most a certain times independent of $k$ and $R_0$, so by adding up the estimate \eqref{eqn:ape1} for open sets in the cover, we obtain 
    \begin{align}
        &\sup_{A_{k-1,k+1}} \left( |u| + (2^kR_0) |\nabla u| + (2^kR_0)^2 |\nabla^2 u| \right) + (2^kR_0)^{2+\alpha} \sup_{y \in A_{k-1,k+1}} \dfrac{|\nabla^2 u(y)-\nabla^2 u(z)|}{|y-z|^\alpha} \nonumber\\
        &\le C \left( \sup_{y \in A_{k-2,k+2}} |u| + (2^kR_0)^2 \sup_{A_{k-2,k+2}} |\Delta_g u| + (2^kR_0)^{2+\alpha} \sup_{y \in A_{k-2,k+2}} \dfrac{|\Delta_g u(y)-\Delta_g u(z)|}{|y-z|^\alpha} \right), \label{eqn:ape2}
    \end{align}
    where $C$ is independent of $k$.
    Rescaling \eqref{eqn:ape2} by $(2^kR_0)^\tau$ and adding up for $k \ge 3$ yields  
    \begin{align}
        &\sup_{M \setminus B^M_{5R_0}} \rho^\tau \left( |u| + \rho |\nabla u| + \rho^2 |\nabla^2 u| + \rho^{2+\alpha} [\nabla^2 u]_{C^{0,\alpha}} \right) \nonumber\\
        &\quad\le C \sup_{M \setminus B^M_{2R_0}} \rho^\tau \left( |u| + \rho^2 |\Delta_g u| + \rho^{2+\alpha}[\Delta_g u]_{C^{0,\alpha}} \right) \label{eqn:ape3}
    \end{align}
    Take a cutoff function $\chi \in C_c^\infty(B^M_{8R_0}) \cap C^\infty(M)$ such that $\chi \equiv 1$ on $B^M_{6R_0}$.
    By interior estimates again on a covering of $B_{8R_0}^M$ and patching them together, we have 
    \begin{align}
        \norm{u}_{C^{2,\alpha}(B^M_{6R_0})} 
        &\le \norm{\chi u}_{C^{2,\alpha}(B^M_{8R_0})}\nonumber\\
        &\le C (\norm{\Delta_g(\chi u)}_{C^{\alpha}(B^M_{8R_0})} + \norm{\chi u}_{C_\tau^0(B_{8R_0}^M)} )\nonumber\\
        &\le C(\norm{\Delta_gu}_{C_\tau^{\alpha}(M)} + \norm{u}_{C_\tau^0(M)}). 
        \label{eqn:ape4}
    \end{align}
    Then \eqref{eqn:ape3} and \eqref{eqn:ape4} give the desired estimate.
\end{proof}

A similar proof technique can be used to bootstrap the regularity of $u$. For $\tau \in \RR$, define $L_\tau^1(M)$ norm as 
\[
    \norm{u}_{L_{\tau}^1(M)} = \int_M \rho^{\tau-m} |u|.
\]
\begin{proposition}
    For $\tau \in \RR$, if $u \in L_{\tau}^1(M)$ and $\Delta_g u \in C_{\tau+2}^{0,\alpha}(M)$, then $u \in C_\tau^{2,\alpha}(M)$.
    \label{prop:bs}
\end{proposition}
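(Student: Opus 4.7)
The plan is to reduce \Cref{prop:bs} to \Cref{prop:ape} by upgrading the hypothesis $u\in L^1_\tau(M)$ to $u\in C^0_\tau(M)$, while keeping the given $C^{0,\alpha}_{\tau+2}$ control on $\Delta_g u$. Once this is in hand, interior Schauder immediately gives $u\in C^{2,\alpha}_{\mathrm{loc}}(M)$, and \Cref{prop:ape} applied to $u$ (whose hypotheses are now met) yields
\[
    \norm{u}_{C^{2,\alpha}_\tau(M)} \le C\bigl(\norm{\Delta_g u}_{C^{0,\alpha}_{\tau+2}(M)} + \norm{u}_{C^0_\tau(M)}\bigr) < \infty.
\]

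To produce the $C^0_\tau$ bound I would mirror the annular decomposition of \Cref{prop:ape}. For each $k\ge 3$ and each $x\in A_{k-1,k+1}$ set $r=2^{k-3}R_0$, so that $B^M_r(x)\subset A_{k-2,k+2}$, and lift $u$ to the universal cover of $\pi^{-1}(\pi(B^M_r(x)))$, which by the simple-connectedness argument already used in \Cref{prop:ape} is $\pi(B^M_r(x))\times\RR$, endowed with a metric uniformly close to the flat Euclidean metric on an $(m+1)$-ball of radius $r$. The lift $\tilde u$ is $L$-periodic in the fibre direction, and on the cover the standard $L^1$-to-$L^\infty$ elliptic estimate (cf.\ \cite[Theorem 8.17]{GT}) gives
\[
    \sup_{B^{\mathrm{cover}}_{r/2}}|\tilde u| \le C\Bigl(r^{-(m+1)}\int_{B^{\mathrm{cover}}_r}|\tilde u| + r^2\sup_{B^{\mathrm{cover}}_r}|\Delta_g\tilde u|\Bigr).
\]
Periodicity of $\tilde u$ gives $\int_{B^{\mathrm{cover}}_r}|\tilde u| \le C(r/L)\int_{B^M_r(x)}|u|$, and the ALF volume growth together with the fact that $\rho\sim 2^kR_0$ on $A_{k-2,k+2}$ lets one rewrite the right-hand side as $Cr^{-\tau}\bigl(\norm{u}_{L^1_\tau(M)}+\norm{\Delta_g u}_{C^0_{\tau+2}(M)}\bigr)$. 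Multiplying through by $\rho(x)^\tau\sim r^\tau$, covering each annulus by finitely many such balls via Vitali, and taking the supremum over $k$ then delivers the $C^0_\tau$ bound on $M\setminus B^M_{5R_0}$. The compact core $B^M_{8R_0}$ is handled by the same cutoff/interior-Schauder step as in the last part of the proof of \Cref{prop:ape}, using that $\Delta_g u\in C^{0,\alpha}_{\mathrm{loc}}$ and $u\in L^1_{\mathrm{loc}}$ already force $u\in C^{2,\alpha}_{\mathrm{loc}}$ there.

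The main obstacle is the scaling bookkeeping in the middle step: the cover is genuinely $(m+1)$-dimensional, so the local maximum principle produces a factor $r^{-(m+1)}$, while the ALF volume grows only as $r^m$, accounting for the extra factor of $r/L$ when one passes from the cover back to $M$. These two mismatches cancel precisely, which is the reason the weighted space $L^1_\tau$ is defined with weight $\rho^{\tau-m}$ rather than $\rho^{\tau-(m+1)}$, and is the correct $L^1$-companion to $C^0_\tau$ in the ALF setting. Beyond this accounting the argument is essentially a verbatim repeat of the proof of \Cref{prop:ape}, now with the $L^1$-variant of interior elliptic regularity in place of the $C^0$-variant.
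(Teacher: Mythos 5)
Your argument is correct and essentially the paper's own proof: the paper likewise reruns the annular covering scheme of \Cref{prop:ape}, replacing the $C^0$ term by an $L^1$-based interior estimate on the cover, with the same fiber-periodicity factor $\sim r/L$ and the same observation that the $\rho^{\tau-m}$ weight in $L^1_\tau$ is exactly what makes the scaling close up — the only cosmetic difference being that the paper extracts the full weighted $C^{2,\alpha}$ bound on each annulus in one step, while you first prove the $C^0_\tau$ bound and then invoke \Cref{prop:ape}. The one point to make explicit in your last step is that \Cref{prop:ape} is stated for $u\in C^{2,\alpha}_\tau(M)$, so you should either remark that its proof (being a sup over local annular estimates) only needs $u\in C^{2,\alpha}_{\loc}(M)$ together with finiteness of $\norm{u}_{C^0_\tau(M)}$, or fold the Schauder step into the annular estimate as the paper does.
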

\begin{proof}
    Since elliptic regularity gives $u \in C^{2,\alpha}(M)$, we only need to prove that $\norm{u}_{C_\tau^{2,\alpha}(M)} < \infty$.
    We repeat our previous proof of \Cref{prop:ape} but instead for an interpolation inequality. 
    So substitute \eqref{eqn:ape1} by an interpolation inequality on a cover to get 
    \begin{align*}
        &\sup_{B_{2^{k-3}R_0}^M(x)} \left( |u| + (2^kR_0)|\nabla u| + (2^kR_0)^2 |\nabla^2 u| + (2^kR_0)^{2+\alpha} \dfrac{|\nabla^2 u(y) - \nabla^2 u(z)|}{|y-z|^\alpha} \right)\\
        &\le C \left( (2^kR_0)^2 \sup_{B_{2^{k-2}R_0}^M(x)} \dfrac{|\Delta u(y) - \Delta u(z)|}{|y-z|^\alpha} + (2^kR_0)^{-(m+1)} \int_{B_{2^{k-2}R_0}^M(x)} |u| \right).
    \end{align*}
    Rescaling it by $(2^kR_0)^\tau$ and noting that in fiber direction $u$ is repeated around $\frac{L}{\rho}$ times, we get 
    \[
        \norm{u}_{C_\tau^{2,\alpha}(A_{k-1,k+1})} \le C(\norm{\Delta_g u}_{C_{\tau+2}^{0,\alpha}(A_{k-2,k+2})} + \norm{u}_{L_{\tau}^1(A_{k-2,k+2})})
    \]
    So adding this for $k \ge 3$ and an interpolation inequality on a compact set yields that $\norm{u}_{C_\tau^{2,\alpha}(M)}$ is finite.
\end{proof}

\begin{proposition}
    Let $(M,g)$ be an ALF manifold as in \Cref{ass:fl}. For $\tau \in \RR \setminus \Lambda$, there exist a constant $C=C(M,\tau)>0$ and a compact set $D \subset M$ such that 
    \[
        \norm{u}_{C_\tau^{2,\alpha}(M)} \le C(\norm{\Delta_g u}_{C_{\tau+2}^{0,\alpha}(M)} + \norm{u}_{L^1(D)})
    \]
    for all $u \in C_\tau^{2,\alpha}(M)$.
    \label{prop:me}
\end{proposition}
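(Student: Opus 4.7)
The plan is to run a blow-up contradiction argument built on \Cref{prop:ape}, with the asymptotic structure of harmonic functions (\Cref{prop:gr}, \Cref{prop:ale3}) used to rule out concentration at infinity.

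Since \Cref{prop:ape} already gives the inequality with $\norm{u}_{C^0_\tau(M)}$ on the right, it suffices to control $\norm{u}_{C^0_\tau(M)}$ by $\norm{\Delta_g u}_{C^{0,\alpha}_{\tau+2}(M)} + \norm{u}_{L^1(D)}$ on some fixed compact $D$. If this failed, I would extract a sequence $u_j \in C^{2,\alpha}_\tau(M)$ with $\norm{u_j}_{C^{2,\alpha}_\tau(M)} = 1$ and $\norm{\Delta_g u_j}_{C^{0,\alpha}_{\tau+2}(M)} + \norm{u_j}_{L^1(D_j)} \to 0$, where $D_j \nearrow M$ is a compact exhaustion. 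Via \Cref{prop:ape} and Arzelà--Ascoli, pass to a subsequence converging in $C^{2,\alpha/2}_\loc(M)$ to some $u_\infty \in C^{2,\alpha}_\tau(M)$. The limit $u_\infty$ is $g$-harmonic and vanishes on every fixed compact (since $\norm{u_j}_{L^1(D)} \le \norm{u_j}_{L^1(D_j)} \to 0$ once $D \subset D_j$), hence $u_\infty \equiv 0$. Picking points $x_j \in M$ at which the pointwise $C^{2,\alpha}_\tau$-seminorm of $u_j$ is at least $1/2$, local uniform convergence forces $\rho(x_j) \to \infty$.

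To contradict this concentration at infinity, I would use the fiber decomposition $u_j = u_j^\pi + u_j^\perp$ on the end $M \setminus B^M_{R_0}$, where by \Cref{ass:fl}(4) the metric equals $g_0$. For $u_j^\pi$, fiber-averaging commutes with $\Delta_{g_0}$, so $u_j^\pi$ descends to an approximately harmonic function on $\RR^m \setminus \overline{B_1}$ with $\Delta u_j^\pi$ small in $C^{0,\alpha}_{\tau+2}$. Since $\tau \notin \Lambda$, choose $\tau_2 > \tau$ with $(\tau, \tau_2) \cap \Lambda = \emptyset$; a quantitative version of \Cref{prop:ale3} then yields $u_j^\pi \in C^{2,\alpha}_{\tau_2}(\RR^m \setminus \overline{B_1})$ with a bound in $\norm{\Delta_g u_j}_{C^{0,\alpha}_{\tau+2}(M)}$ plus the boundary data on $\partial B^M_{R_0}$, the latter going to zero by the local uniform convergence. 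For $u_j^\perp$, the blow-up proof of \Cref{lem:mvp} adapts almost verbatim: in the rescaled limit, $\Delta v_k \to 0$ is as good as $\Delta v_k = 0$ for extracting a Euclidean-harmonic nontrivial limit, yielding faster-than-polynomial decay of $u_j^\perp$ uniform in $j$. Combining these two pieces, $u_j$ decays strictly faster than $\rho^{-\tau}$ on the end, contradicting $\rho(x_j) \to \infty$ together with the lower bound at $x_j$.

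The main obstacle is upgrading \Cref{prop:gr} and \Cref{prop:ale3} from the exactly-harmonic setting to the approximately-harmonic one with uniform estimates, and matching the asymptotic analysis on the end to the local control near $\partial B^M_{R_0}$. For $u^\pi$ this is essentially the ALE weighted elliptic estimate for $\Delta_{\mathrm{euc}}$ on $\RR^m \setminus \overline{B_1}$ with Dirichlet data, and for $u^\perp$ it is a bookkeeping adaptation of the blow-up in \Cref{lem:mvp} absorbing an additive $\Delta_g u$ term. Both are routine in spirit, but ensuring the boundary contributions from $\partial B^M_{R_0}$ are controlled by the local $C^{2,\alpha}$-smallness (rather than by the weighted elliptic theory) is where care is needed.
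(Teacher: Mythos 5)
Your overall architecture (reduce to a $C^0_\tau$ bound via \Cref{prop:ape}, split the end into the fiber-average $u^\pi$ and the zero-mean $u^\perp$, treat $u^\pi$ by Euclidean weighted theory and $u^\perp$ by the fiber spectral-gap mechanism) matches the paper's in its ingredients, but you organize it as one global blow-up, whereas the paper argues directly: it decomposes $u=\chi u+((1-\chi)u)^\pi+((1-\chi)u)^\perp$, handles the average part by the Fredholm estimate on $\RR^m$, handles the zero-mean part by a separately stated estimate (\Cref{lem:de}) proved by a compactness argument with \emph{global} normalization $\norm{u_k}_{C^0_\tau}=1$, and glues the pieces with elementary fiber-splitting inequalities plus interpolation. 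The difference in normalization is exactly where your sketch has a genuine gap. In \Cref{lem:mvp} the rescaled function $v_k$ is normalized by the \emph{local} size of $u^\perp$ and its derivatives; for approximately harmonic $u_j$ the rescaled equation reads $\Delta_{g_k}v_k=\rho(x_k)^{2\nu}(\Delta_g u_j)/N_k$ with $N_k$ the local normalizer, and since $N_k$ can be far smaller than $\rho^{2\nu-\tau-2}$, it is simply not true that ``$\Delta v_k\to 0$ is as good as $\Delta v_k=0$'': the limit need not be harmonic and the mean-value inequality can fail. Relatedly, your advertised conclusion, uniform-in-$j$ faster-than-polynomial decay of $u_j^\perp$, is false in the inhomogeneous setting: the iteration of \Cref{prop:gr} stalls once the error term $\rho^{2\nu}\sup|\Delta_g u_j|\sim\rho^{2\nu-\tau-2}$ dominates, so the best you can extract is a gain of one or two orders over $\rho^{-\tau}$. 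That weaker gain (obtained after a case analysis: either $N_k$ dominates the inhomogeneity and the blow-up runs, or $u^\perp$ is already pointwise smaller than $\rho^{2\nu-\tau-2}$) does suffice for your contradiction, but it is a real modification of both the statement and the proof of \Cref{lem:mvp}/\Cref{prop:gr}, not bookkeeping; the paper's \Cref{lem:de} avoids the issue entirely because the global normalization makes $\Delta v_k\to0$ automatic.

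Two further points need repair. First, your treatment of $u_j^\pi$ has a weight mismatch: you cannot conclude a $C^{2,\alpha}_{\tau_2}$ bound, $\tau_2>\tau$, from a source term controlled only in $C^{0,\alpha}_{\tau+2}$ (the norm $C^{0,\alpha}_{\tau_2+2}$ is strictly stronger and need not even be finite). You do not need the gain: use the exterior weighted estimate at the non-exceptional weight $\tau$ itself, $\norm{u_j^\pi}_{C^{2,\alpha}_\tau}\le C(\norm{\Delta u_j^\pi}_{C^{0,\alpha}_{\tau+2}}+\text{annulus data})\to0$, which is the same standard Euclidean input the paper invokes. Second, your contradiction point $x_j$ carries the full weighted pointwise $C^{2,\alpha}$ quantity, which is not controlled by $C^{2,\alpha/2}_{\loc}$ convergence alone (neither in ruling out bounded $\rho(x_j)$ nor in the final step for $u_j^\perp$, where you only produce $C^0$ decay); you must run interior Schauder estimates with the vanishing $\norm{\Delta_g u_j}_{C^{0,\alpha}_{\tau+2}}$, on the covers at scale $\rho$ as in the proof of \Cref{prop:ape}, to upgrade. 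Finally, when comparing $\Delta_g u_j^\pi$, $\Delta_g u_j^\perp$ with $\Delta_g u_j$ you implicitly need boundedness of the fiber-average projection on the weighted H\"older norms (the analogue of the paper's \Cref{cl:me1}); on the exactly invariant end this follows since the $S^1$-action is by isometries, but it should be said. With these repairs your route works, at the cost of re-proving a weaker, quantitative version of \Cref{lem:de} inside the blow-up; as written, the key step for $u^\perp$ does not go through.
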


\begin{lemma}
    For $\tau \in \RR$, there exists $C = C(M,\tau)>0$ satisfying the following: If $u \in C_\tau^{2,\alpha}(M)$ integrates to 0 on each $S^1$-fiber in the region of $M \setminus K$, then 
    \[
        \norm{u}_{C_\tau^0(M)} \le C(\norm{\Delta_g u}_{C_{\tau+2}^{0,\alpha}(M)} + \norm{u}_{L^1(K)}).
    \] 
    \label{lem:de} 
\end{lemma}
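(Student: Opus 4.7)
The plan is a blow-up/contradiction argument. Assume the estimate fails and extract a sequence $u_k \in C_\tau^{2,\alpha}(M)$ with $u_k = u_k^\perp$ on $M\setminus K$, normalized so that $\|u_k\|_{C_\tau^0(M)} = 1$, while $\|\Delta_g u_k\|_{C_{\tau+2}^{0,\alpha}(M)} + \|u_k\|_{L^1(K)} \to 0$. Pick $x_k \in M$ with $\rho(x_k)^\tau |u_k(x_k)| \ge \tfrac12$. I split according to whether $\rho(x_k)$ stays bounded or tends to infinity. The key nontrivial input is the super-polynomial decay of $u^\perp$ from \Cref{prop:gr}, which rules out nonzero limiting harmonic profiles of fiber-zero-average type.

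In the bounded case, pass to a subsequence so that $x_k \to x_\infty \in M$. The estimate of \Cref{prop:ape} applied annulus-by-annulus together with Arzel\`a--Ascoli gives $C^{2,\alpha/2}_{\loc}$ precompactness, so a subsequential limit $u_\infty \in C_\tau^{2,\alpha}(M)$ exists; it is harmonic on $M$, satisfies $u_\infty = u_\infty^\perp$ on $M\setminus K$ (the fiber condition is closed under the locally uniform limit), and vanishes on $K$ since uniform convergence on the compact set $K$ combined with $\|u_k\|_{L^1(K)}\to 0$ forces $\|u_\infty\|_{L^1(K)}=0$. Applying \Cref{prop:gr} to $u_\infty\big|_{M\setminus K}$ yields $u_\infty = o(\rho^{-\kappa})$ for every $\kappa>0$; in particular $u_\infty\to 0$ at infinity. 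Viewed on $M\setminus K$, $u_\infty$ is harmonic with zero boundary values on $\partial K$ and vanishing at infinity, so the maximum principle forces $u_\infty\equiv 0$, which together with $\rho(x_k)$ bounded and $u_k \to u_\infty$ locally uniformly contradicts $|u_k(x_k)| \ge \tfrac12\rho(x_k)^{-\tau}$.

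The main obstacle is the unbounded case $\rho(x_k)\to\infty$, where there is no compactness in $M$ itself and one must pass to a blow-down on the model at infinity. I rescale values but not distances: set $v_k(y) := \rho(x_k)^\tau u_k(y)$ on $B_R^M(x_k)$ for an arbitrary fixed $R$. Since $\rho(y)/\rho(x_k) \to 1$ uniformly on such balls, $|v_k|\le 1+o(1)$, and $|\Delta_g v_k|\le C\|\Delta_g u_k\|_{C_{\tau+2}^{0,\alpha}}\rho(x_k)^{-2}\to 0$. Passing to the trivializing covers $\pi(B_R^M(x_k))\times\RR$ already used in the proof of \Cref{prop:ape}, the metrics converge smoothly to the flat product as $\rho(x_k)\to\infty$, since the base sphere of radius $\rho(x_k)$ flattens while the fibers retain length $L$; equivalently, the pointed Riemannian manifolds $(M,g,x_k)$ converge to $\RR^m\times S^1_L$. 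Interior Schauder estimates on these covers give $C^{2,\alpha/2}_{\loc}$ precompactness, and a diagonal subsequence produces $v_\infty$ on $\RR^m\times S^1_L$ that is bounded, harmonic, has zero fiber average, and satisfies $|v_\infty(0)|\ge\tfrac12$.

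To close the argument I show that such $v_\infty$ must vanish, which is the Euclidean analogue of \Cref{prop:gr}. Expanding $v_\infty(x,y) = \sum_{n\ne 0} a_n(x)\,e^{2\pi i n y/L}$ reduces the problem to Helmholtz equations $\Delta_{\RR^m} a_n = (2\pi n/L)^2 a_n$ on $\RR^m$ with bounded $a_n$. A sequence $y_j$ with $a_n(y_j)\to\sup a_n$ yields, after translation and extraction, a bounded solution of the same equation achieving its supremum at an interior point; the strong maximum principle gives $\Delta a_n \le 0$ there, while the equation gives $\Delta a_n = (2\pi n/L)^2\sup a_n$, forcing $\sup a_n \le 0$. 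The symmetric argument gives $\inf a_n \ge 0$, so $a_n\equiv 0$ for every $n\ne 0$, hence $v_\infty\equiv 0$, contradicting $v_\infty(0)\ne 0$.
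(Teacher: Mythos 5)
Your proof is correct and follows essentially the same strategy as the paper: a contradiction/normalization argument with the dichotomy between bounded and unbounded $\rho(x_k)$, local limits in the bounded case, and a blow-up to the flat model $\RR^m\times S^1_L$ in the unbounded case. The only differences are in the endgame of each case (you use \Cref{prop:gr} plus the maximum principle where the paper invokes unique continuation, and a Fourier-mode/Helmholtz maximum-principle argument where the paper lifts to $\RR^{m+1}$ and uses the classification of bounded harmonic functions), and these are valid minor variants of the same steps.
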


\begin{proof}
For the sake of contradiction, suppose there exists a sequence $\{u_k\}_k \subset C_\tau^{2,\alpha}(M)$ which integrates to 0 on each fiber,
\[
    \norm{u_k}_{C_\tau^0(M)} = 1 \text{ and } \norm{\Delta_g u_k}_{C_{\tau+2}^{0,\alpha}} + \norm{u_k}_{L^1(K)} \to 0.
\]
Since $\norm{u_k}_{C_\tau^0(M)}=1$, there exists $p_k \in M$ such that $1-\frac 1k < \rho(p_k)^\tau |u_k(p_k)| \le 1$. 
By flipping the sign of $u_k$ if needed, we may assume that $1-\frac 1k < \rho(p_k)^\tau u_k(p_k) \le 1$.
We divide into two cases depending on whether $\rho(p_k)$ is bounded.

We first assume that $\rho(p_k)$ is bounded.
Then there exists a subsequence of $p_k$ that converges to $p \in M$. Note that as $\norm{u_k}_{C_\tau^0(M)}$ and $\norm{\Delta_g u_k}_{C_{\tau+2}^{0,\alpha}(M)}$ are bounded, by \Cref{prop:ape}, $\norm{u_k}_{C_\tau^{2,\alpha}(M)}$ is bounded as well. Therefore, there exists a subsequence of $u_k$ that locally converges to $u_\infty \in C_{\tau'}^{2,\alpha/2}(M)$ for some $\tau' < \tau$.
This limit $u_\infty$ satisfies 
\[
    u_\infty(p) = \frac{1}{\rho(p)^\tau}, \quad \Delta_g u_\infty = 0, \text{ and } \norm{u_\infty}_{L^1(K)} = 0.
\] 
That is, $u_\infty$ is a nontrivial harmonic function vanishing on a compact set, giving a contradiction.

We now assume that $\rho(p_k)$ is unbounded.
We use a trick similar to the one in the proof of \Cref{lem:mvp}.
Here, we define $v_k \colon B_{\rho(p_k)^\nu}^m(0) \times S^1 \to \RR$ as 
\[
    v_k(y_0, y_1) = \rho(p_k)^\tau u_k(p_k+y_0, y_1)
\]
for coordinates around $p_k$ on which $\Delta_g$ converges to $\Delta_{\text{euc}}$ as $k \to \infty$.
Since $\norm{u_k}_{C_\tau^0(M)}$ and $\norm{\Delta_g u_k}_{C_{\tau+2}^{0,\alpha}(M)}$ are bounded, $\norm{v_k}_{C^0(B_{\rho(p_k)^\nu}^m(0)\times S^1)}$ and $\norm{\Delta_g v_k}_{C^{0,\alpha}(B_{\rho(p_k)^\nu}^m(0) \times S^1)}$ are bounded independent of $\rho(p_k)$ and $k$.
Note that by \Cref{prop:ape}, $\norm{u_k}_{C_\tau^{2,\alpha}(M)}$ are bounded independent of $k$, and therefore, $\norm{v_k}_{C^{2,\alpha}(B_{\rho(p_k)^\nu}^m \times S^1)}$ are bounded. So there exists a subsequence of $v_k$ that converges locally to $v_\infty \in C^{2,\alpha/2}(\RR^m \times S^1)$ such that
\[
    v_\infty(0) = 1, \quad \Delta_g v_\infty = 0,
\]
and $v_\infty$ integrates to 0 on each fiber. 
Viewing $v_\infty$ as a function on $\RR^{m+1}$, a cover of $\RR^m \times S^1$, we know that $v_\infty$ is a harmonic polynomial. Also, periodicity in fiber direction implies that this harmonic polynomial is invariant under $S^1$-action. But then $v_\infty$ does not integrate to 0 on $\{0\} \times S^1 \subset \RR^m \times S^1$, giving a contradiction.
\end{proof}

\begin{proof}[Proof of \Cref{prop:me}]
    By \Cref{prop:ape}, it is enough to prove that for any $\epsilon > 0$, there holds 
    \[
        \norm{u}_{C_\tau^0(M)} \le \epsilon \norm{u}_{C_\tau^{2,\alpha}(M)} + C\norm{\Delta_g u}_{C_{\tau+2}^{0,\alpha}(M)} + C(\epsilon) \norm{u}_{L^1(D)}.
    \]
    Fix $\chi$ a smooth cutoff function such that $\chi \equiv 1$ on $B_{R_0}$, $\chi \equiv 0$ outside $B_{2R_0}$, and taking values in $[0,1]$.
    We decompose $u$ into 3 parts:
    \[
        u = \chi u + ((1-\chi)u)^\pi + ((1-\chi)u)^\perp.
    \]
    Denote by $u_1 = \chi u$, $u_2 = ((1-\chi)u)^\pi$, and $u_3 = ((1-\chi)u)^\perp$.

    Since $u_1$ is compactly supported in $B_{2R_0}$, by an interpolation inequality, 
    \begin{align}
        \norm{u_1}_{C_\tau^0(M)}
        &\le C \norm{u}_{C^0(\overline{B_{2R_0}})} \nonumber\\
        &\le \epsilon \norm{u}_{C^{2,\alpha}(\overline{B_{2R_0}})} + C(\epsilon) \norm{u}_{L^1(\overline{B_{2R_0}})} \label{eqn:me1}
    \end{align}

    Note that $\pi_\ast u_2$ is defined on $\RR^m \setminus \overline{B_1(0)}$. Since $\Delta_{\text{euc}}$ is Fredholm on $\RR^m$ for $\tau \in \RR \setminus \Lambda$, we have 
    \[
        \norm{\pi_\ast u_2}_{C_\tau^{2,\alpha}(\RR^m)} \le C(\norm{\Delta \pi_\ast u_2}_{C_\tau^{0,\alpha}(\RR^m)} + \norm{\pi_\ast u_2}_{L^1(K)}).
    \]
    Therefore, up to a constant, there holds
    \begin{equation}
        \norm{u_2}_{C_\tau^{2,\alpha}(M)} \le C(\norm{\Delta_g u_2}_{C_\tau^{0,\alpha}(M)} + \norm{u_2}_{L^1(K)}).
        \label{eqn:me2}
    \end{equation}

    For $u_3$, we appeal to \Cref{lem:de} to get 
    \begin{equation}
        \norm{u_3}_{C_\tau^0(M)} \le C(\norm{\Delta_g u_3}_{C_{\tau+2}^{0,\alpha}(M)} + \norm{u_3}_{L^1(K)}).
        \label{eqn:me3}
    \end{equation}

    Now we would like to add up \eqref{eqn:me1}, \eqref{eqn:me2}, and \eqref{eqn:me3}. To get the desired estimate with $D = \overline{B_{2R_0}}$, it remains to prove
    \begin{equation}
        \begin{cases}
            \norm{\Delta_g u_2}_{C_\tau^{0,\alpha}(M)} + \norm{\Delta_g u_3}_{C_\tau^{0,\alpha}(M)} \le C\norm{\Delta_g u}_{C_\tau^{0,\alpha}(M)}\\
            \norm{u_2}_{L^1(K)} + \norm{u_3}_{L^1(K)} \le C\norm{u}_{L^1(K)}.
        \end{cases}
        \label{eqn:men}
    \end{equation}

    \begin{claim}
    Let $a \in \RR$ and $b \colon S^1 \to \RR$ a continuous function with $\int_{S^1} b = 0$. Then 
    \[
        |a| + \sup |b| \le 10 \sup |a+b|.
    \]
    \label{cl:me1}
    \end{claim}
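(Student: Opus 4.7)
The plan is to exploit the zero-average condition on $b$ to produce a point where $b$ vanishes, and then bound $|a|$ and $\sup|b|$ separately in terms of $\sup|a+b|$. The constant $10$ in the statement is clearly not sharp, so I will not try to optimize it; instead I will aim for the crude bound $3\sup|a+b|$, which trivially implies the claim.

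First, I would observe that since $b\colon S^1\to \RR$ is continuous with $\int_{S^1}b = 0$, the function $b$ cannot be strictly positive nor strictly negative on $S^1$, so by the intermediate value theorem there exists $\theta_0 \in S^1$ with $b(\theta_0) = 0$. Evaluating $a+b$ at this point gives the immediate estimate
\[
    |a| \;=\; |a + b(\theta_0)| \;\le\; \sup_{S^1}|a+b|.
\]

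Next, for an arbitrary $\theta \in S^1$, I would write $b(\theta) = (a+b(\theta)) - a$ and apply the triangle inequality together with the previous bound on $|a|$:
\[
    |b(\theta)| \;\le\; |a+b(\theta)| + |a| \;\le\; 2\sup_{S^1}|a+b|.
\]
Taking the supremum over $\theta$ and adding the two inequalities then yields
\[
    |a| + \sup_{S^1}|b| \;\le\; 3\sup_{S^1}|a+b| \;\le\; 10\sup_{S^1}|a+b|,
\]
which is the desired inequality. There is no real obstacle here — the only point that requires any thought is the application of the intermediate value theorem to locate a zero of $b$, which is exactly where the hypothesis $\int_{S^1}b=0$ enters.
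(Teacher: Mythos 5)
Your proof is correct, and it is in fact cleaner than the argument in the paper. Both proofs hinge on the same elementary observation that a continuous function on $S^1$ with zero average must vanish somewhere, but you exploit it more efficiently: evaluating $a+b$ at a zero of $b$ gives $|a|\le \sup|a+b|$ at once, and then the triangle inequality $|b(\theta)|\le|a+b(\theta)|+|a|$ gives $\sup|b|\le 2\sup|a+b|$, so you obtain the constant $3$ with no case analysis. The paper instead normalizes to $a=1$ and splits into the cases $\sup|b|\le 9$ and $\sup|b|>9$, using the zero of $b$ only in the first case and the crude bound $1+b_0\le 10(b_0-1)\le 10\sup|1+b|$ in the second; this yields only the constant $10$ stated in the claim (which is all that is needed downstream, where the analogous integral estimate in Claim \ref{cl:me2} also naturally produces a constant of that size). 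Your argument is shorter, avoids the rescaling step, and sharpens the constant, so there is no gap to report.
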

    \begin{proof}[Proof of \Cref{cl:me1}]
        If $a = 0$, then the inequality is trivial.
        Hence, we prove for $a \ne 0$ only, and by rescaling we may assume that $a = 1$.

        Suppose that $\sup |b| \le 9$. Since $\int b = 0$ and $b$ is continuous, $0 \in b(S^1)$. That is, $1 \le \sup |1+b|$. On the other hand, the left hand side is $1 + \sup|b| \le 1 + 9 = 10$, so the inequality holds.

        Suppose now that $\sup |b| > 9$. Denote by $b_0 =\sup |b|$. Then
        \[
            1+\sup|b| = 1+b_0 \le 10(b_0-1) \le 10 \sup|1+b|.
        \]
        Hence, the inequality is proven.
    \end{proof}

    \begin{claim}
    Let $a \in \RR$ and $b \colon S^1 \to \RR$ a continuous function with $\int_{S^1} b = 0$. Then 
    \[
        \int_{S^1} (|a|+|b|) \le 10 \int |a+b|.
    \]
    \label{cl:me2}
    \end{claim}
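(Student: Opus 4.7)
The plan is to deduce the inequality from two applications of the triangle inequality, exploiting $\int_{S^1} b = 0$ to compare $|a|$ directly with $\int|a+b|$. No case analysis (as in the sup-version Claim \ref{cl:me1}) will be needed, and in fact the constant $10$ is rather loose.

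Let $L$ denote the length of $S^1$. First I would observe that since $\int_{S^1} b = 0$,
\[
|a|\,L = \left|\int_{S^1}(a+b)\right| \le \int_{S^1}|a+b|,
\]
which handles the $|a|$ contribution on the left-hand side. Next, writing $b = (a+b) - a$ and applying the triangle inequality pointwise, followed by the previous estimate, gives
\[
\int_{S^1}|b| \le \int_{S^1}|a+b| + |a|\,L \le 2\int_{S^1}|a+b|.
\]
Adding the two displays yields
\[
\int_{S^1}\bigl(|a|+|b|\bigr) = |a|\,L + \int_{S^1}|b| \le 3\int_{S^1}|a+b|,
\]
which is strictly stronger than the claimed inequality with constant $10$.

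There is no real obstacle here, and in particular no need to imitate the two-case split used for Claim \ref{cl:me1}: the mean-zero hypothesis is cleaner to use in integrated form than in sup form, because $|a|L$ appears on the nose as $\bigl|\int(a+b)\bigr|$. The only thing worth checking is that the trivial case $a=0$ is subsumed (indeed both sides reduce to $\int|b|$), and that continuity of $b$ is not actually used in this argument, only integrability; continuity was needed in Claim \ref{cl:me1} to produce a zero of $b$ via the intermediate value theorem, but integration bypasses this.
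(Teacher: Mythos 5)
Your proof is correct, and it actually sharpens the claim: you obtain the constant $3$ in place of $10$, which of course suffices for the application in \eqref{eqn:men}. Your route is genuinely simpler than the paper's. Both arguments begin with the same observation that the mean-zero hypothesis gives $|a|L = \bigl|\int_{S^1}(a+b)\bigr| \le \int_{S^1}|a+b|$ (this is \eqref{eqn:mecl21} in the paper, stated there after rescaling to $a=1$). The difference lies in how the $\int|b|$ term is controlled: the paper splits $S^1$ into the set $I=\{t: b(t)\in(-5,5)\}$ and its complement, bounding $\int_I|b|$ by $5L$ and using the elementary inequality $|x|\le 4|1+x|$ for $|x|\ge 5$ off $I$, which is what produces the constant $10$; you instead use the pointwise triangle inequality $|b|\le|a+b|+|a|$ and feed the already-established bound on $|a|L$ back in, avoiding any case analysis and any rescaling to $a=1$. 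Your closing remarks are also accurate: the $a=0$ case is subsumed, and continuity of $b$ is not needed here (only integrability), whereas in \Cref{cl:me1} continuity is genuinely used to produce a zero of $b$ from $\int_{S^1}b=0$.
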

    \begin{proof}
        If $a = 0$, then the inequality is trivial.
        Hence, we prove for $a \ne 0$ only, and by rescaling we may assume that $a = 1$.

        By triangle inequality, note that 
        \[
            \int |1+b| \ge \left| \int (1+b) \right| = L.
        \]
        Therefore, 
        \begin{equation}
            \int_{S^1} |a| \le \int |1+b|.
            \label{eqn:mecl21}
        \end{equation}

        Define $I = \{t \in S^1 : b(t) \in (-5,5)\}$. Then 
        \begin{equation}
            \int_{I} |b| \le 5L \le 5\int |1+b|.
            \label{eqn:mecl22}
        \end{equation}
        Finally, note that for a real number $x$ with $|x| \ge 5$, $|x| \le 4|1+x|$. Therefore, 
        \begin{equation}
            \int_{S^1 \setminus I} |b| \le 4\int |1+b|.
            \label{eqn:mecl23}
        \end{equation}
        Adding up \eqref{eqn:mecl21}, \eqref{eqn:mecl22}, and \eqref{eqn:mecl23} gives the desired inequality.
    \end{proof} 
    By applying \Cref{cl:me1} and \Cref{cl:me2} on each fiber, we get \eqref{eqn:men}.
    Hence the desired estimate is proven.
\end{proof}

\subsection{Fredholmness of $\Delta_g$}

\begin{theorem}
    Let $(M,g)$ be an ALF manifold as in \Cref{ass:fl}.
    For $\tau \in \RR \setminus \Lambda$, the Laplacian $\Delta_g \colon C_\tau^{2,\alpha}(M) \to C_\tau^{0,\alpha}(M)$ is Fredholm.
    \label{prop:fl}
\end{theorem}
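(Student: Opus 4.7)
The plan is to upgrade the a priori bound of Proposition \ref{prop:me} to a genuine Fredholm statement via Peetre's lemma and a duality argument on the dual weight.

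First I would establish semi-Fredholmness. Proposition \ref{prop:me} provides
\[
\|u\|_{C_\tau^{2,\alpha}(M)} \le C\bigl(\|\Delta_g u\|_{C_{\tau+2}^{0,\alpha}(M)}+\|u\|_{L^1(D)}\bigr)
\]
for a fixed compact set $D\subset M$. The inclusion $C_\tau^{2,\alpha}(M)\hookrightarrow L^1(D)$ is compact by Arzelà–Ascoli since $D$ is compact and uniformly bounded $C^{2,\alpha}$-families are equicontinuous. Combined with the displayed estimate, the standard Peetre lemma implies that $\Delta_g$ has closed range and finite-dimensional kernel.

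Next, the cokernel is handled through duality on the dual weight $\tau^{\star}:=m-2-\tau$. The exceptional set $\Lambda=\mathbb{Z}\setminus(0,m-2)$ is invariant under the involution $\tau\mapsto m-2-\tau$, because $m\in\mathbb{Z}$ so the involution preserves $\mathbb{Z}$, and the open interval $(0,m-2)$ is symmetric about its midpoint; hence $\tau^{\star}\in\mathbb{R}\setminus\Lambda$, and running the previous argument with $\tau^{\star}$ yields a second semi-Fredholm operator $\Delta_g:C_{\tau^{\star}}^{2,\alpha}(M)\to C_{m-\tau}^{0,\alpha}(M)$ whose kernel $K^{\star}$ is finite-dimensional. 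Because the range of $\Delta_g$ is closed, Hahn–Banach identifies $\operatorname{coker}(\Delta_g)$ with the annihilator of $\operatorname{Im}(\Delta_g)$ inside $\bigl(C_{\tau+2}^{0,\alpha}(M)\bigr)^{\ast}$. Any annihilator $T$ satisfies $\Delta_g T=0$ distributionally (tested against $\Delta_g\psi$ for $\psi\in C_c^{\infty}(M)$), so interior elliptic regularity represents $T$ by a smooth harmonic function $\phi$. Testing $T$ on bump functions $\psi_R$ supported in dyadic annuli $\{R\le\rho\le 2R\}$ and normalized to unit $C_{\tau+2}^{0,\alpha}$-norm yields $\int_{\{R\le\rho\le 2R\}}|\phi|\lesssim R^{\tau+2}$. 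Together with $\operatorname{Vol}(\{R\le\rho\le 2R\})\sim R^m$ and the mean value inequality for the subharmonic function $|\phi|$, this gives $|\phi(x)|\lesssim\rho(x)^{-(m-2-\tau)}$, and bootstrapping via Proposition \ref{prop:bs} promotes $\phi$ to an element of $C_{\tau^{\star}}^{2,\alpha}(M)$, hence of $K^{\star}$. The linear map $T\mapsto\phi$ is injective, whence $\dim\operatorname{coker}(\Delta_g)\le\dim K^{\star}<\infty$ and $\Delta_g$ is Fredholm.

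The delicate step is the duality: converting the abstract continuity of $T$ on the weighted Hölder space into a classical pointwise decay of the representing function $\phi$. The naive pairing between $C_{\tau+2}^{0,\alpha}$ and its dual identifies $\phi$ only up to weight $m-2-\tau$, which is borderline and would be obstructed by logarithmic corrections if $\tau$ lay in $\Lambda$; the hypothesis $\tau\notin\Lambda$ rules out exactly those harmonic modes saturating this exceptional rate. If needed, the fiber decomposition $\phi=\phi^{\pi}+\phi^{\perp}$ of Proposition \ref{prop:gr} reduces the asymptotic analysis of $\phi$ to the classical Euclidean expansion of Proposition \ref{prop:ale3} applied to the fiber-averaged part $\phi^{\pi}$, while $\phi^{\perp}$ decays faster than any polynomial and contributes no obstruction.
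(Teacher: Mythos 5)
Your proposal is correct in outline and is structurally close to the paper's own proof: both rest on the a priori estimate of \Cref{prop:me} and on a duality at the reflected weight $\tau^{\star}=m-2-\tau$ (the paper's identity \eqref{eqn:kc}; your observation that $\Lambda=\ZZ\setminus(0,m-2)$ is invariant under $\tau\mapsto m-2-\tau$ is exactly what makes that identity usable). The genuine difference is in the first half: you obtain closed range and finite-dimensional kernel simultaneously from \Cref{prop:me}, the compact inclusion $C_\tau^{2,\alpha}(M)\hookrightarrow L^1(D)$, and Peetre's lemma, whereas the paper proves finiteness of the kernel by expanding harmonic elements (via \Cref{prop:ale3} for the fiber average and \Cref{prop:gr} for the oscillating part, mapping injectively into harmonic polynomials) and leaves the closed-range step implicit in \Cref{prop:me}. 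Your route is more economical and makes the semi-Fredholm statement explicit; the paper's route additionally identifies the kernel with a finite-dimensional space of asymptotic harmonic polynomials. Your cokernel step is essentially the paper's: annihilators of the range are weakly harmonic, hence smooth, and are placed in the kernel at the dual weight.

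Two points need repair or at least care. First, the bounds you actually derive are logarithmically borderline for the tool you cite: summing $\int_{\{R\le\rho\le 2R\}}|\phi|\lesssim R^{\tau+2}$ over dyadic scales, or integrating the pointwise bound $|\phi|\lesssim\rho^{-(m-2-\tau)}$, gives only $\int_{\{R\le\rho\le 2R\}}\rho^{\tau^{\star}-m}|\phi|\lesssim 1$ per annulus, so the hypothesis $\phi\in L^1_{\tau^{\star}}(M)$ of \Cref{prop:bs} does not follow and that proposition cannot be invoked as stated. The fix is available inside the paper's toolkit: either test $T$ against the single global function $\rho^{-(\tau+2)}$ multiplied by a sign of $\phi$ mollified at scale $\sim\rho$ (its $C_{\tau+2}^{0,\alpha}$ norm is bounded), which is precisely the paper's device with $\rho^{\tau-m}\in C_{m-\tau}^{0,\alpha}$ and yields $\phi\in L^1_{\tau^{\star}}(M)$ directly; or keep your pointwise bound and upgrade it to $C_{\tau^{\star}}^{2,\alpha}$ by the annulus Schauder estimates underlying \Cref{prop:ape}, which need only a $C^0_{\tau^{\star}}$ bound. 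Relatedly, your mean value step should be carried out on the local fiber covers as in \Cref{lem:mvp}, since intrinsic balls of radius comparable to $\rho(x)$ are collapsed. Second, a subtlety your write-up shares with the paper: the representing function $\phi$ is produced by testing $T$ only against $C_c^\infty(M)$, which is not dense in the weighted H\"older space, so the injectivity of $T\mapsto\phi$ (needed for $\dim\operatorname{coker}\le\dim K^{\star}$) is asserted rather than proved in both treatments; since the paper glosses over the same point, I do not count it against you, but it deserves a remark if you write this up.
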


\begin{proof}[Proof of \Cref{prop:fl}]
    We first show that $\Delta_g \colon C_\tau^{2,\alpha}(M) \to C_{\tau+2}^{0,\alpha}(M)$ has finite dimensional kernel.

    If $\tau > 0$, then the kernel is trivial by maximum principle. So we assume that $\tau < 0$.
    Let $u \in C_\tau^{2,\alpha}(M)$.
    By \Cref{prop:ale3}, for any small $\epsilon >0$ there exists a harmonic $h \in \cH_{\le -\tau}(\RR^m \setminus \overline{B_1(0)})$ such that 
    \[
        \pi_\ast u^\pi - h \in C_{\epsilon}^{2,\alpha}(\RR^m \setminus \overline{B_1(0)}).
    \]
    We claim that such $h$ is unique. Suppose that $\pi_\ast u^\pi - h_1, \pi_\ast u^\pi - h_2 \in C_\epsilon(\RR^m \setminus \overline{B_1(0)})$.
    Then $h_1 - h_2 \in C_\epsilon(\RR^m \setminus \overline{B_1(0)})$ is a harmonic polynomial, which therefore cannot decay at infinity unless is identically zero.

    We define $\Phi \colon \ker \Delta_g \to \cH_{\le -\tau}(\RR^m \setminus \{0\})$ by $u \mapsto h$. 
    We prove that the map is injective.
    Suppose $\Phi(u) = 0$. That is, $\pi_\ast u^\pi \in C_\epsilon(\RR^m \setminus \overline{B_1(0)})$.
    Also, by \Cref{prop:gr}, $u^\perp$ decays fast.
    Hence, $u$ is harmonic decaying fast at infinity, contradicting the maximum principle unless $u = 0$.
    Therefore, $\Phi$ is injective. Since $\cH_{\le -\tau}(\RR^m \setminus \overline{B_1(0)})$ is finite dimensional, we conclude that the kernel is finite dimensional.

    Next, we prove that 
    \begin{equation}
    \ker(\Delta_g \colon C_\tau^{2,\alpha}(M) \to C_{\tau+2}^{0,\alpha}(M)) = \im(\Delta_g \colon C_{-\tau+m-2}^{2,\alpha}(M) \to C_{-\tau+m}^{0,\alpha}(M))^\perp,
    \label{eqn:kc}
    \end{equation}
    where the spaces are regarded as subspaces of $(C^{0,\alpha}_{-\tau+m}(M))^\ast$.

    We first show $\ker \subset \im^\perp$. For harmonic $u \in C_\tau^{2,\alpha}(M)$, it is enough to show that $\int_M u \Delta_g v  =0$ for all $v \in C_{-\tau+m}^{2,\alpha}(M)$.
    Indeed, by integration by parts, 
    \[
        \int_M u \Delta_g v = - \int_M \langle \nabla^g u, \nabla^g v \rangle = \int_M (\Delta_g u)v = 0,
    \]
    where we used the fact that $u\nabla^g v$ and $v\nabla^g u$ decay fast enough so that the boundary terms are negligible.

    We next show that $\im^\perp \subset \ker$. For $u \in (C_{-\tau+m}^{0,\alpha}(M))^\ast$ that annihilates the image of $\Delta_g$ in $C_{-\tau+m}^{0,\alpha+2}(M)$,
    in particular $u$ annihilates $\Delta_g v$ for all compactly supported smooth functions $v$ on $M$, and therefore $u$ is weakly harmonic on $M$. So by elliptic regularity, $u \in C^\infty(M) \subset C^{2,\alpha}(M)$ and $u$ is harmonic.
    Moreover, $u \in (C_{-\tau+m}^{0,\alpha}(M))^\ast$ and $\rho^{\tau-m} \in C_{-\tau+m}^{0,\alpha}(M)$, so in particular $u \in L_\tau^1(M)$. By \Cref{prop:bs}, $u \in C_\tau^{2,\alpha}(M)$.
\end{proof}

\begin{corollary}
The Laplacian $\Delta_g \colon C_\tau^{2,\alpha}(M) \to C_{\tau+2}^{0,\alpha}(M)$ is Fredholm of index 0 for $\tau \in (0, m-2)$.
\end{corollary}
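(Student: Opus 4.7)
The plan is to invoke \Cref{prop:fl} for Fredholmness and then separately show that both the kernel and the cokernel vanish on the interval $(0, m-2)$; once this is done, the index is $0 - 0 = 0$. Since $\Lambda = \ZZ \setminus (0, m-2)$, any $\tau \in (0, m-2)$ automatically satisfies $\tau \notin \Lambda$, so \Cref{prop:fl} applies and the Laplacian $\Delta_g \colon C_\tau^{2,\alpha}(M) \to C_{\tau+2}^{0,\alpha}(M)$ is Fredholm with no further work.

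For the kernel, I would reuse the observation already made inside the proof of \Cref{prop:fl}: any $u \in C_\tau^{2,\alpha}(M)$ with $\tau > 0$ satisfies $u(x) \to 0$ as $\rho(x) \to \infty$. If in addition $\Delta_g u = 0$, then the maximum principle on the complete manifold $M$ forces $u \equiv 0$. Hence $\ker\bigl(\Delta_g \colon C_\tau^{2,\alpha}(M) \to C_{\tau+2}^{0,\alpha}(M)\bigr) = 0$ for every $\tau \in (0, m-2)$.

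For the cokernel, the key tool is the duality identity \eqref{eqn:kc} already established within the proof of \Cref{prop:fl}. Substituting the dual weight $\tau' := m - 2 - \tau$ in place of $\tau$ in that identity gives
\[
\ker\bigl(\Delta_g \colon C_{\tau'}^{2,\alpha}(M) \to C_{\tau'+2}^{0,\alpha}(M)\bigr) = \im\bigl(\Delta_g \colon C_{\tau}^{2,\alpha}(M) \to C_{\tau+2}^{0,\alpha}(M)\bigr)^\perp,
\]
so the cokernel at weight $\tau$ is identified with the kernel at weight $\tau'$. The interval $(0, m-2)$ is symmetric under the involution $\tau \mapsto m - 2 - \tau$, so $\tau' \in (0, m-2)$ as well, and in particular $\tau' > 0$. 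The same maximum principle argument then shows this kernel is trivial, so the cokernel vanishes and the index equals $0$.

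The only potential subtlety is ensuring that the dual weight $\tau' = m - 2 - \tau$ is itself a valid weight for which the identity \eqref{eqn:kc} applies; this is precisely why $(0, m-2)$ is the natural symmetric range. All the genuine analytic work has been carried out in the preceding estimates and in the proof of \Cref{prop:fl}, so this corollary is essentially a bookkeeping consequence of the vanishing of the kernel at positive weights combined with the self-adjoint duality.
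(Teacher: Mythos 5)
Your proof is correct and follows essentially the same route as the paper: Fredholmness from \Cref{prop:fl}, trivial kernel via the maximum principle for harmonic functions decaying at infinity, and trivial cokernel via the duality identity \eqref{eqn:kc}. The only difference is that you make explicit the substitution $\tau \mapsto m-2-\tau$ in \eqref{eqn:kc} (using the symmetry of the interval $(0,m-2)$), which the paper leaves implicit in the sentence ``By \eqref{eqn:kc}, the cokernel is trivial as well.''
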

\begin{proof}
    Since $\Lambda = \ZZ \setminus (0, m-2)$ and there is no harmonic functions decaying at infinity by maximum principle, we know that $\ker(\Delta_g \colon C_\tau^{2,\alpha}(M) \to C_{\tau+2}^{0,\alpha}(M))$ is trivial.
    By \eqref{eqn:kc}, the cokernel is trivial as well. Hence, $\Delta_g$ is Fredholm of index 0. 
\end{proof}

So far, we have assumed that the $S^1$-fiber forms a closed orbit. However, this need not hold in general. In fact, if $(M, g)$ is an ALF manifold that is asymptotically close but not necessarily identical to a model metric $g_0$ at infinity, as described in \Cref{sec:alf}, then the difference $\Delta_g - \Delta_{g_0}$ is a compact operator. As a result, the Fredholm property and its index remain unchanged. This leads to the following:
\begin{corollary}
    Let $(M, g)$ be an ALF manifold. For $\tau \in \RR \setminus \Lambda$, the Laplacian $\Delta_g \colon C_\tau^{2,\alpha}(M) \to C_{\tau+2}^{0, \alpha}(M)$ is Fredholm. Moreover, if $\tau \in (0, m-2)$, then $\Delta_g$ has index 0. 
    \label{cor:lfi}
\end{corollary}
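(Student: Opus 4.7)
The plan is to reduce \Cref{cor:lfi} to \Cref{prop:fl} and the subsequent corollary via a compact perturbation argument. Given a general ALF metric $g$ of order $\eta > 0$ with model $g_0$, I would construct a comparison metric
\[
g' := \chi g + (1-\chi)g_0,
\]
where $\chi$ is a smooth cutoff equal to $1$ on $B_{R_0}$ and $0$ outside $B_{2R_0}$. For $R_0$ sufficiently large the ALF decay of $g - g_0$ guarantees that $g'$ is a genuine Riemannian metric on $M$, and by construction $g' = g_0$ on $M \setminus B_{2R_0}$, so $g'$ satisfies \Cref{ass:fl}. Applying \Cref{prop:fl} and its corollary to $g'$ yields that $\Delta_{g'}\colon C_\tau^{2,\alpha}(M) \to C_{\tau+2}^{0,\alpha}(M)$ is Fredholm for all $\tau \in \RR \setminus \Lambda$, of index $0$ when $\tau \in (0, m-2)$.

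The core step is to show that $K := \Delta_g - \Delta_{g'}$ defines a compact operator between the same weighted Hölder spaces: the stability of the Fredholm property and of the Fredholm index under compact perturbation then delivers the corollary, with $\Delta_g = \Delta_{g'} + K$ inheriting both properties. In local coordinates
\[
Ku = (g^{ij} - (g')^{ij})\partial_i \partial_j u + b^i\,\partial_i u + c\, u,
\]
with smooth coefficients that are polynomial expressions in $g^{-1}, (g')^{-1}$ and the first derivatives of $g - g'$. Since both $g$ and $g'$ are ALF of order $\eta$ with the same model $g_0$, all coefficients of $K$ (together with their derivatives) decay at rate $\rho^{-\eta}$, and consequently $K$ is bounded into the improved-weight target $C_{\tau + 2 + \eta}^{0,\alpha}(M)$.

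The main analytic point is therefore the compact embedding $C_{\sigma + \eta}^{0,\alpha}(M) \hookrightarrow C_\sigma^{0,\alpha}(M)$ for $\eta > 0$. Given a bounded sequence $\{f_n\}$ in the former, Arzelà–Ascoli on each annulus $A_{k-1,k+1} \Subset M$ combined with a diagonal extraction produces a subsequence converging in $C^0_{\mathrm{loc}}(M)$; the weight gain $\rho^{-\eta}$ makes the $C_\sigma^{0,\alpha}$-norm of $f_n - f_{n'}$ outside $B_R$ uniformly $O(R^{-\eta})$, and Hölder interpolation between the uniform $C^0$-convergence on compact sets and the uniform $C^{0,\alpha}$-bound forces convergence in the target (at worst with a slight weakening of the Hölder exponent, which still suffices for the Fredholm conclusion). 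This is the step I expect to be the main obstacle, since in the weighted Hölder category compact embeddings generally demand strict improvement in the weight and care to transfer the gained decay into honest Hölder convergence; once it is in place, the Fredholmness and vanishing index of $\Delta_g$ follow at once from those of $\Delta_{g'}$.
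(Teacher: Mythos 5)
Your overall strategy is the same as the paper's: interpolate to a metric $g'=\chi g+(1-\chi)g_0$ that satisfies \Cref{ass:fl}, apply \Cref{prop:fl} and its corollary to $g'$, and transfer Fredholmness and the index to $\Delta_g$ by viewing $K=\Delta_g-\Delta_{g'}$ as a perturbation (the paper itself disposes of this step in one sentence, asserting that the difference of Laplacians is compact). The genuine gap is in the step you yourself flag as the main analytic point: the embedding $C^{0,\alpha}_{\sigma+\eta}(M)\hookrightarrow C^{0,\alpha}_{\sigma}(M)$ with the \emph{same} H\"older exponent is not compact, and no gain of weight can make it so, because compactness already fails on a fixed compact ball: the functions $f_n(x)=n^{-\alpha}\sin(nx_1)\phi(x)$ are bounded in $C^{0,\alpha}$, converge to $0$ uniformly, but admit no subsequence that is Cauchy in the $C^{0,\alpha}$ seminorm. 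More to the point, the operator $K$ itself is not compact from $C^{2,\alpha}_\tau$ into $C^{0,\alpha}_{\tau+2}$: its principal symbol $(g^{ij}-(g')^{ij})\xi_i\xi_j$ is nonzero somewhere, and near such a point $x_0$ the concentrating family $u_k=k^{-2-\alpha}\phi(k(x-x_0))$ (suitably normalized in $C^{2,\alpha}_\tau$) satisfies $Ku_k\to 0$ in sup norm while $[Ku_k]_{C^{0,\alpha}}$ stays bounded below, so no subsequence of $Ku_k$ converges in the target. Your hedge that a ``slight weakening of the H\"older exponent still suffices'' does not repair this: stability of the Fredholm property under compact perturbation requires $K$ to be compact between the \emph{same} Banach spaces on which $\Delta_{g'}$ is Fredholm, and compactness of $K$ only into $C^{0,\beta}_{\tau+2}$ with $\beta<\alpha$ gives no conclusion for $\Delta_g\colon C^{2,\alpha}_\tau\to C^{0,\alpha}_{\tau+2}$ (and if you switch the target to $\beta$ throughout, the same non-compactness recurs, since the problematic factorization is ``decaying coefficient times two derivatives,'' not the weight).

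The argument can be repaired within your framework, but by smallness rather than compactness: since $g-g_0=O(\rho^{-\eta})$ with all derivatives and $K$ vanishes on $B_{R_0}$, the operator norm of $K\colon C^{2,\alpha}_\tau\to C^{0,\alpha}_{\tau+2}$ is $O(R_0^{-\eta})$, and Fredholm operators of a fixed index form an open set in operator norm; to use this one must check that the quantitative input for $\Delta_{g'}$ (the estimates of \Cref{prop:ape} and \Cref{prop:me}, whose constants depend only on geometric bounds) is uniform in the gluing radius $R_0$, so that the stability threshold does not degenerate as $R_0\to\infty$. Equivalently, one can rerun the proofs of \Cref{prop:ape}, \Cref{prop:me}, \Cref{prop:bs} and the duality identification of the cokernel in the proof of \Cref{prop:fl} directly for a metric that is only asymptotic to the model, absorbing the $O(\rho^{-\eta})$ error terms into the left-hand side. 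Be aware that the paper's own one-line justification (``$\Delta_g-\Delta_{g_0}$ is a compact operator'') suffers from the same issue, so you have reproduced its strategy faithfully; but as a self-contained proof, the compact-embedding lemma you rely on is false as stated and needs to be replaced by one of the fixes above.
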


\section{Preservation of ALF structure under Ricci Flow}

We now prove that the ALF structure is preserved under the Ricci flow. The short-time existence of the Ricci flow starting from an ALF metric is guaranteed by \cite[Theorem 1.1]{Shi89}. Combining this with the result established in this section, we conclude that for any ALF metric $g$, there exists a Ricci flow defined on a definite time interval, starting from $g$, whose metrics remain ALF throughout the flow.

\begin{theorem}
    Let $(M, \overline{g})$ be an ALF manifold of order $\eta > 0$ with model metric $g_0$.
    Suppose that there exist $\sigma > \frac{m-2}{2}$ and a metric $g_{\mathrm{RF}}$ on $M$ that is Ricci flat outside a compact set such that
    \[
        \rho_{g_0}^k |\nabla^{g_0, k}(\overline{g}-g_{\mathrm{RF}})|_{g_0} = O(\rho_{g_0}^{-\sigma})
    \]
    for all $k \ge 0$. If $(g(t))_{t \in [0,T]}$ is a Ricci flow with bounded curvature starting from $g(0) = \overline{g}$, then the Ricci flow preserves the decaying order of the metric towards the background Ricci-flat metric, i.e. 
    \[
        \rho_{g_0}^k |\nabla^{g_0, k}(g(t) - g_{\mathrm{RF}})|_{g_0} = O(\rho_{g_0}^{-\sigma})
    \]
    for all $t \in [0,T]$.
    \label{thm:alfp}
\end{theorem}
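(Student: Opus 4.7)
The plan is to work in DeTurck gauge with $g_{\mathrm{RF}}$ as the reference: this converts the weakly parabolic Ricci flow into a strictly parabolic quasilinear heat equation for $\tilde h := \tilde g - g_{\mathrm{RF}}$, whose decay can be analyzed via a weighted parabolic theory on ALF manifolds. The result is then transferred back to $g(t)$ through the DeTurck diffeomorphism. Setting $W(g)^i := g^{kl}(\Gamma_{kl}^i(g) - \Gamma_{kl}^i(g_{\mathrm{RF}}))$, I first solve the Ricci-DeTurck flow
\[
    \partial_t \tilde g = -2\Ric(\tilde g) + \cL_{W(\tilde g)} \tilde g, \qquad \tilde g(0) = \overline g,
\]
and integrate $-W(\tilde g(t))$ from the identity to get diffeomorphisms $\phi_t \colon M \to M$. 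By uniqueness among bounded-curvature flows, $g(t) = (\phi_t^{-1})^* \tilde g(t)$ agrees with Shi's Ricci flow.

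Outside the compact set where $\Ric(g_{\mathrm{RF}}) = 0$, $\tilde h$ satisfies
\[
    \partial_t \tilde h = \tilde g^{kl} \nabla^{g_{\mathrm{RF}}}_k \nabla^{g_{\mathrm{RF}}}_l \tilde h + \Rm(g_{\mathrm{RF}}) \ast \tilde h + Q(\tilde h, \nabla^{g_{\mathrm{RF}}} \tilde h),
\]
whose principal part is the Lichnerowicz-type operator $L := \Delta_{g_{\mathrm{RF}}} + \Rm(g_{\mathrm{RF}}) \ast$ on symmetric $2$-tensors, and $Q$ vanishes quadratically in $(\tilde h, \nabla \tilde h)$. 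The key step is short-time existence of $\tilde h$ in the weighted H\"older space $C^{2,\alpha}_\sigma$. Parabolic Schauder estimates for $L$ in these weighted spaces are derived by the same interior-estimate-plus-scaling-plus-Vitali-covering technique as in \Cref{prop:ape}, together with the Fredholm-type mapping properties analogous to \Cref{cor:lfi}; they show that the heat semigroup $e^{tL}$ preserves $C^{2,\alpha}_\sigma$ with norms uniform on $[0,T]$. A Banach contraction argument on the Duhamel formula $\tilde h(t) = e^{tL}\tilde h(0) + \int_0^t e^{(t-s)L} Q(\tilde h(s))\,ds$, exploiting the quadratic nature of $Q$, then produces $\tilde h(t) \in C^{2,\alpha}_\sigma$ on a short interval, which is extended to $[0,T]$ using the bounded-curvature hypothesis.

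To transfer decay to $g(t)$, note that $W(\tilde g)$ is a contraction of $\nabla^{g_{\mathrm{RF}}} \tilde h$ and so decays like $\rho^{-\sigma - 1}$; thus $\phi_t$ and $\phi_t^{-1}$ displace points at infinity by $O(\rho^{-\sigma - 1})$ with similarly decaying Jacobian perturbations. Taylor expansion then gives $(\phi_t^{-1})^* g_{\mathrm{RF}} - g_{\mathrm{RF}} = O(\rho^{-\sigma - 1})$ and $(\phi_t^{-1})^* \tilde h(t) = O(\rho^{-\sigma})$, so $g(t) - g_{\mathrm{RF}} = O(\rho^{-\sigma})$. The higher-order decay $\rho^k |\nabla^{g_0, k}(g(t) - g_{\mathrm{RF}})|_{g_0} = O(\rho^{-\sigma})$ follows by combining Shi's higher-order estimates with a weighted H\"older bootstrap on the $\tilde h$ equation.

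The main obstacle I anticipate is setting up the weighted parabolic theory for the Lichnerowicz operator on $(M, g_{\mathrm{RF}})$: on the model end one must verify that $e^{tL}$ preserves decay at rate $\sigma$, which requires a spectral decomposition in the $S^1$-fiber direction analogous to the $u^\pi/u^\perp$ split of \Cref{prop:gr}, and the hypothesis $\sigma > (m-2)/2$ should enter as the threshold ensuring that the decay of $\tilde h$ is strong enough for the quadratic nonlinearity $Q$ to close the Duhamel iteration in $C^{2,\alpha}_\sigma$.
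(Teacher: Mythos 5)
Your route is genuinely different from the paper's. The paper never leaves the Ricci flow itself: it notes that since $g_{\mathrm{RF}}$ is Ricci-flat at infinity, the initial Ricci tensor already decays at the improved rate $|\nabla^{g_0,k}\Ric_{\overline g}| = O(\rho_{g_0}^{-2-\sigma-k})$, then propagates this decay in time by applying a weighted maximum principle (with weight $\rho_{g_0}^{4+2\sigma+2k}$ and the drift term checked to be bounded) to the evolution equation of $\Ric_{g(t)}$ and its derivatives, and finally integrates $\partial_t g = -2\Ric_{g(t)}$ to conclude that $g(t)-g(0)$ decays at rate $\rho_{g_0}^{-2-\sigma}$, which is faster than $\rho_{g_0}^{-\sigma}$. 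The only analytic input is a maximum principle on complete manifolds with bounded curvature; no gauge-fixing, no Fredholm theory, and no heat-kernel or semigroup estimates are needed. Your Ricci--DeTurck strategy is a standard and in principle viable alternative (it is the one used in related asymptotically Euclidean/ALE stability work), but it is considerably heavier, and as written it has two genuine gaps.

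First, the core of your argument --- weighted \emph{parabolic} Schauder estimates and the statement that $e^{tL}$ preserves $C_\sigma^{2,\alpha}$ uniformly on $[0,T]$ --- is asserted by analogy with \Cref{prop:ape} and \Cref{cor:lfi}, but those are elliptic results and do not yield the parabolic theory; you would have to build the weighted parabolic estimates on the ALF end (including the fiber decomposition you mention) from scratch, which is precisely the machinery the paper's maximum-principle argument is designed to avoid. Second, the continuation from short time to the full interval $[0,T]$ is not justified: the contraction/Duhamel argument gives an existence time depending on $\lVert \tilde h(0)\rVert_{C_\sigma^{2,\alpha}}$, and the bounded-curvature hypothesis controls unweighted norms (via Shi) but not the weighted norm whose persistence is exactly the content of \Cref{thm:alfp}. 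To close this you would need an a priori weighted estimate on $[0,T]$, e.g.\ a Gronwall argument treating $Q(\tilde h,\nabla\tilde h)$ as linear in the weighted norm with the other factor bounded in unweighted norms, together with a uniqueness theorem (Chen--Zhu type) to identify the pulled-back DeTurck solution with the given bounded-curvature flow. With these pieces supplied your approach would work, but each is a nontrivial addition, whereas the paper's proof needs none of them; also note that the threshold $\sigma > \tfrac{m-2}{2}$ plays no quantitative role in this theorem (quadratic terms decay at rate $2\sigma+2 > \sigma+2$ for any $\sigma>0$), so it is not the mechanism that closes the iteration.
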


\begin{proof}
    The argument in the proof of \cite[Theorem 2.2]{Li18} with some modifications yield $|\nabla^{g(t), l} \Rm_{g(t)}| = O(\rho_{g_0}^{-2-l})$. Since we will repeat a similar argument for Ricci curvature with some nontrivial modifications, we omit the proof of the aforementioned decaying rates of $\Rm_{g(t)}$.

    Fix  orthonormal coordinates with respect to $g_0$ in a neighborhood of $p \in M$. 
    In these coordinates, note that 
    \[
        \Ric_{\overline{g}} = \overline{g}^{-1} \ast \partial^2 \overline{g} + \overline{g}^{-1} \ast \overline{g}^{-1} \ast \partial \overline{g} \ast \partial \overline{g}.
    \]
    Therefore comparing with $g_{\mathrm{RF}}$, we have $|\Ric_{\overline{g}}| = O(\rho_{g_0}^{-2-\sigma})$.
    Moreover, by differentiating the above equation repeatedly, we have $|\nabla^{g_0, k} \Ric_{\overline{g}}| = O(\rho_{g_0}^{-2-\sigma-k})$ for all $k \ge 0$.

    Recall the evolution equation of Ricci curvature under Ricci flow: schematically
    \begin{equation}
        \dfrac{\partial}{\partial t} (\Ric_{g(t)}) = \Delta_{g(t)} \Ric_{g(t)} + g(t)^{-1} \ast g(t)^{-1} \ast \Rm_{g(t)} \ast \Ric_{g(t)}.
        \label{eqn:alfpre}
    \end{equation}
    Since the curvature is bounded, there exist $C_1, C_2 >0$ such that for all $t \in [0,T]$,
    \begin{align*}
        \dfrac{\partial}{\partial t} |\Ric_{g(t)}|_{g(t)}^2 &\le \Delta_{g(t)} |\Ric_{g(t)}|_{g(t)}^2 - |\nabla^{g(t)} \Ric_{g(t)}|_{g(t)}^2 + C_1 |\Rm_{g(t)}|_{g(t)}|\Ric_{g(t)}|_{g(t)}^2\\
        &\le \Delta_{g(t)} |\Ric_{g(t)}|_{g(t)}^2 + C_2|\Ric_{g(t)}|_{g(t)}^2.
    \end{align*}
    Define $u(t) = e^{-C_2t}|\Ric_{g(t)}|_{g(t)}^2$ for $t \in [0,T]$ so that $\partial_t u \le \Delta u$ on $M \times [0,T]$, and $h(x) = \rho_{g_0}^{4+2\sigma}$ on $M$. 
    Then $w = hu$ satisfies 
    \[
        (\partial_t - \Delta_{g(t)}) w \le Bw - 2 \langle \nabla^{g(t)} \log h, \nabla^{g(t)} w \rangle_{g(t)}
    \]
    on $M \times [0,T]$ where
    \begin{equation}
    B = \dfrac{2|\nabla^{g(t)}h|_{g(t)}^2 - h \Delta_{g(t)}h}{h^2}.
    \label{eqn:alfp1}
    \end{equation}
    We claim that $B$ is uniformly bounded on $M \times [0,T]$ with respect to $g_0$.

    Since the curvature is bounded and that $\overline{g}$ and $g_0$ are equivalent, there exists $C>0$ such that 
    \begin{equation}
        C^{-1}g_0 \le g(t) \le C g_0
        \label{eqn:alfpcm}
    \end{equation}
    on $M \times [0,T]$.

    Note that 
    \begin{align*}
        \dfrac{\partial}{\partial t} |\nabla^{g(t)} h|_{g_0}^2 &= 2 \left\langle \dfrac{\partial}{\partial t} \nabla^{g(t)} h, \nabla^{g(t)} h \right\rangle_{g_0} \le C_3 \Ric_{g(t)} (\nabla^{g(t)} h, \nabla^{g(t)} h),\\
        \dfrac{\partial}{\partial t} |\Delta_{g(t)} h| &\le \left| \dfrac{\partial}{\partial t} \Delta_{g(t)}h \right|= 2 \left| \langle \Ric_{g(t)}, \nabla^{g(t),2} h \rangle_{g(t)} \right|,
    \end{align*}
    where the first equation follows from Ricci flow equation and second equation follows from \cite[Lemma 2.30]{CLN06}.
    Since the curvature is bounded, by \eqref{eqn:alfpcm}, there exists $C_4 > 1$ such that
    \begin{align*}
        \left| \dfrac{\partial}{\partial t} |\nabla^{g(t)}h|_{g(t)}^2 \right| &\le C_4|\nabla^{g_0} h|_{g_0}^2,\\
        \left| \dfrac{\partial}{\partial t} |\Delta_{g(t)}h| \right| &\le C_4|\nabla^{g_0,2} h|_{g_0}. 
    \end{align*}
    Integrating over $[0,T]$, we get 
    \begin{align*}
        |\nabla^{g(t)}h|_{g(t)}^2 &\le (C_4(T+1)) |\nabla^{g_0} h|_{g_0}^2,\\
        |\Delta_{g(t)}h| &\le (C_4(T+1)) |\nabla^{g_0,2} h|_{g_0}.
    \end{align*}
    By a direct computation on $(M, g_0)$, for some $C_5 > 0$ we have 
    \begin{align*}
        |\nabla^{g_0}h|_{g_0}^2 &\le C_5 \rho_{g_0}^{6+4\sigma},\\
        |\nabla^{g_0,2}h|_{g_0} &\le C_5 \rho_{g_0}^{2+2\sigma}.
    \end{align*}
    Therefore, $B$ as in \eqref{eqn:alfp1} is uniformly bounded on $M \times [0,T]$.
    Then by maximum principle \cite[Theorem 2.1]{Li18}, $|w|$ is uniformly bounded on $M \times [0,T]$, and therefore, $|\Ric_{g(t)}|_{g(t)} \le C \rho_{g_0}^{-2-\sigma}$.

    By induction on $k$, the order of covariant derivative, from \eqref{eqn:alfpre}, we get 
    \begin{align}
        \dfrac{\partial}{\partial t} |\nabla^{k} \Ric_{g(t)}|_{g(t)}^2 
        &= \Delta |\nabla^{k} \Ric_{g(t)}|_{g(t)}^2 - 2|\nabla^{k+1} \Ric_{g(t)}|_{g(t)}^2 + \sum_{l=0}^k \nabla^{l} \Rm_{g(t)} \ast \nabla^{k-l} \Ric_{g(t)} \ast \nabla^k \Ric_{g(t)}\nonumber\\
        &\le \Delta |\nabla^k \Ric_{g(t)}|_{g(t)}^2 + C_6 \sum_{l=0}^k |\nabla^l \Rm_{g(t)}|_{g(t)} |\nabla^{k-l} \Ric_{g(t)}|_{g(t)} |\nabla^k \Ric_{g(t)}|_{g(t)}\label{eqn:alfp2}
    \end{align}
    for some $C_6>0$, where the covariant derivatives are all with respect to $g(t)$.
    
    Now we claim that $|\nabla^k \Ric_{g(t)}| \le C\rho_{g_0}^{-2-k-\sigma}$. We proceed by induction on $k$. The claim for $k=0$ is proven above. Suppose that the claim is true when $k$ is substituted by any integer less than $k$. 
    Define $h_k = \rho_{g_0}^{4+2k+2\sigma}$ and $w_k = h_k|\nabla^k \Ric_{g(t)}|_{g(t)}^2$. 
    By \eqref{eqn:alfp2}, 
    \[
        (\partial_t - \Delta_{g(t)})w_k \le B_k w_k - 2\langle \nabla \log h_k, \nabla w_k \rangle + C_6 \sum_{l=0}^k h_k |\nabla^l \Rm_{g(t)}|_{g(t)} |\nabla^{k-l} \Ric_{g(t)}|_{g(t)} |\nabla^k \Ric_{g(t)}|_{g(t)},
    \]
    where 
    \[
        B_k = \dfrac{2|\nabla h_k|^2-h_k\Delta h_k}{h_k^2}.
    \]
    As before, $B_k$ is uniformly bounded on $M \times [0,T]$. By inductive hypothesis and decaying rates for $|\nabla^l \Rm_{g(t)}|$, 
    \[
        h_k |\nabla^l \Rm_{g(t)}|_{g(t)} |\nabla^{k-l} \Ric_{g(t)}|_{g(t)} |\nabla^k \Ric_{g(t)}|_{g(t)} \le C_7 \rho_{g_0}^{k+\sigma} |\nabla^k \Ric_{g(t)}|_{g(t)} \le C_8 w_k^{1/2}
    \]
    for some $C_8 > 0$ if $0 < l \le k$ and 
    \[
        h_k|\nabla^l \Rm_{g(t)}|_{g(t)} |\nabla^{k-l} \Ric_{g(t)}|_{g(t)} |\nabla^k \Ric_{g(t)}|_{g(t)} = h_k |\Rm_{g(t)}|_{g(t)} |\nabla^k \Ric_{g(t)}|_{g(t)}^2 \le C_9 w_k^2
    \]
    if $l=k$. Hence for $C_{10} = kC_8+C_9$, 
    \[
        (\partial_t - \Delta)w_k \le -2\langle \nabla \log h, \nabla w_k \rangle + C_{10}(w_k + w_k^{1/2}).
    \]
    Since the solution to $\frac{dv}{dt} = C_{10}(v+v^{1/2})$ with $v(0)=c>0$ is bounded, by the maximum principle, $|w_k|$ is bounded. Therefore, $|\nabla^k \Ric_{g(t)}|_{g(t)} = O(\rho_{g_0}^{-2-k-\sigma})$ on $M \times [0,T]$.

    For any $k \ge 0$, 
    \[
        \partial_t(\nabla^{g_0, k}g(t)) = \nabla^{g_0, k} (\partial_t g(t)) = -2 \nabla^{g_0, k} \Ric_{g(t)}.
    \]
    Then for any $t \in [0,T]$ and any vector field $X$ on $M$, 
    \begin{align*}
        |\log \nabla^{g_0, k} g(t)(X,X) - \log \nabla^{g_0, k} g(0)(X,X)| = \left| \int_0^t \dfrac{-2\nabla^{g_0, k} \Ric_{g(t)}(X,X)}{g(s)(X,X)} ds \right|
        \le C_{11} \rho_{g_0}^{-2-k-\sigma},
    \end{align*}
    where $C_{11} > 0$ is a uniform constant on $M \times [0,T]$ independent on $X$. Therefore, 
    \[
        |\nabla^{g(0), k} (g(t)-g(0))| = g(0) O(\rho_{g_0}^{-2-k-\sigma})
    \]
    and since $g(0)$ is equivalent to $g_0$, we get the desired decaying rate.
\end{proof}

\section{A renormalized Perelman's $\lambda$-functional $\lambda_{\ALF}$}

In this section, we define an adapted $\lambda$-functional, called $\lambda_{\ALF}$. We begin by introducing Haslhofer's $\lambda$-functional, denoted by $\lambda_{\ALF}^\circ$. A Hardy-type inequality, different from the one used in \cite{DO20}, is required to justify the existence of this functional, since the inequality used in \cite{DO20} relies on a non-collapsing assumption that does not hold in our setting. We first define $\lambda_{\ALF}^\circ$ as the infimum of a functional called $\widetilde{\cF}$, and then show that this infimum is attained by a unique positive function with appropriate decay. The existence and uniqueness follow from the Fredholm theory developed in \Cref{sec:fl}.

\subsection{A preliminary construction of $\lambda_{\ALF}$ under scalar curvature conditions}
Throughout this section, $\epsilon > 0$ is a small enough constant.

\begin{assumption}
    \begin{enumerate}
        \item $m \ge 3$, $\eta >0$, $\sigma \in (\frac{m-2}{2}, m-2)$, and $\alpha \in (0,1)$ are given.
        \item $(M^{m+1}, g)$ is an ALF manifold of order $\eta$ with a model metric $g_0$.
        \item There exists a Ricci-flat metric $g_{\mathrm{RF}}$ for which 
        \[
            \rho_{g_0}^k |\nabla^{g_0, k}(g-g_{\mathrm{RF}})|_{g_0} = O(\rho_{g_0}^{-\sigma}).
        \]
        \item Given $\epsilon > 0$, a preliminary neighborhood of $g_{\mathrm{RF}}$ that we consider is 
        \begin{equation}
            \cM_{\sigma}^{2,\alpha}(g_{\mathrm{RF}}, \epsilon) := \{ g' \in B_\epsilon(g_{\mathrm{RF}}) ; C_\sigma^{2,\alpha} \;|\; R_{g'} = O(\rho_{g_0}^{-\eta'}) \text{ for } \eta' > m \text{ or } R_{g'} \ge 0 \}.
            \label{eqn:nbhd}
        \end{equation}
    \end{enumerate}
    \label{ass:bs}
\end{assumption}
\begin{definition}
    Let $(M, g)$ be an ALF manifold of order $\eta > \frac{m-2}{2}$. For $w-1 \in C_c^\infty(M)$, define 
    \[
        \widetilde{\cF}_{\ALF}(g,w) = \int_M (4|\nabla^g w|_g^2 + R_g w^2).
    \]
    Then \emph{$\lambda_{\ALF}^\circ$-functional} is
    \[
        \lambda_{\ALF}^\circ(g) = \inf_{\substack{w\colon M \to \RR \\ w-1 \in C_c^\infty(M)}} \widetilde{\cF}_{\ALF}(g,w).
    \]
\end{definition}

\begin{proposition}
    Let $(M, g$) be an ALF manifold with $m$, $\alpha$, $\eta$, $\sigma$, $g_0$, $g_{\mathrm{RF}}$ as in \Cref{ass:bs}.
    Then there exists a unique solution $w_g \in C_\sigma^{2,\alpha}(M)$ to 
    \begin{equation}
        -4\Delta_g w_g + R_g w_g = 0.
        \label{eqn:lcalf2}
    \end{equation}
    If $g \in \cM_{\sigma}^{2,\alpha}(g_{\mathrm{RF}}, \epsilon)$,
    then the infimum of $\lambda_{\ALF}^\circ$ is attained by $w_g$.
    \label{prop:lcalf}
\end{proposition}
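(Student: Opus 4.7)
The plan is to reduce the problem to a linear inhomogeneous equation and then use the Fredholm machinery developed in \Cref{sec:fl}. Writing $w_g = 1 + u_g$ with $u_g \in C_\sigma^{2,\alpha}(M)$, the equation $-4\Delta_g w_g + R_g w_g = 0$ becomes
\[
L_g u_g := (-4\Delta_g + R_g) u_g = -R_g.
\]
First I would verify that the right-hand side lies in $C_{\sigma+2}^{0,\alpha}(M)$: since $g - g_{\mathrm{RF}} \in C_\sigma^{2,\alpha}$ and $R_{g_{\mathrm{RF}}} \equiv 0$ on the end, the standard expansion of $R_g$ in terms of $g - g_{\mathrm{RF}}$ and its first two derivatives yields $R_g = O(\rho_{g_0}^{-\sigma-2})$ with the needed H\"older regularity. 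Next, \Cref{cor:lfi} gives that $-4\Delta_g \colon C_\sigma^{2,\alpha}(M) \to C_{\sigma+2}^{0,\alpha}(M)$ is Fredholm of index zero for $\sigma \in (0, m-2)$, and its kernel is trivial by the maximum principle. The zeroth-order term $u \mapsto R_g u$ factors through $C_{2\sigma+2}^{0,\alpha}(M)$ and is compact as a map into $C_{\sigma+2}^{0,\alpha}(M)$ by a weighted Arzel\`a--Ascoli argument, so $L_g$ remains Fredholm of index zero. Injectivity of $L_g$ follows either from smallness of $\|R_g\|$ when $g \in \cM_\sigma^{2,\alpha}(g_{\mathrm{RF}}, \epsilon)$ (so $L_g$ is a small perturbation of the isomorphism $-4\Delta_g$), or by integration by parts when $R_g \ge 0$. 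Hence $u_g = -L_g^{-1} R_g$ is the unique solution, and $w_g = 1 + u_g$.

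For the minimization claim, I would expand $\widetilde{\cF}_{\ALF}(g, 1+v)$ for $v \in C_c^\infty(M)$, integrate by parts the cross term $\int R_g v$ using the rewritten Euler--Lagrange equation $-4\Delta_g u_g = -R_g(1 + u_g)$ (legitimate since $v$ has compact support), and then complete the square in $v - u_g$ to obtain the algebraic identity
\[
\widetilde{\cF}_{\ALF}(g, 1+v) = \int_M R_g w_g + \int_M \bigl( 4 |\nabla^g(v - u_g)|_g^2 + R_g (v - u_g)^2 \bigr).
\]
The first integral is a finite constant independent of $v$ (by $R_g \in L^1$, which is exactly what the defining hypotheses of $\cM_\sigma^{2,\alpha}(g_{\mathrm{RF}}, \epsilon)$ ensure), while the second is non-negative either directly from $R_g \ge 0$, or, in the sign-indefinite case $R_g = O(\rho^{-\eta'})$, from the Hardy-type inequality announced at the opening of this section combined with smallness of $R_g$. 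Since $C_c^\infty(M)$ is dense in $C_\sigma^{2,\alpha}(M)$, one can pick $v_k \to u_g$ in $C_\sigma^{2,\alpha}$ so that the second integral goes to zero, yielding $\lambda_{\ALF}^\circ(g) = \int_M R_g w_g$ as the value attained in the limit at $w_g$.

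I expect the main obstacle to be the Hardy-type inequality underpinning non-negativity and coercivity of the quadratic form $Q(\phi) = \int (4|\nabla \phi|_g^2 + R_g \phi^2)$ when $R_g$ changes sign. Because ALF manifolds collapse to an $m$-dimensional base at infinity, the correct Hardy inequality must account for the fiber direction and is genuinely different from the one used in \cite{DO20}; this is the analytic heart of the argument. A secondary delicate point is the integration by parts at infinity: with $u_g = O(\rho_{g_0}^{-\sigma})$ and $\sigma$ possibly only slightly larger than $(m-2)/2$ against ALF volume growth $\rho_{g_0}^{m-1}$, the vanishing of the boundary terms in the limit $R\to\infty$ has to be handled with a careful cutoff argument that exploits the precise decay rates of $u_g$ and $\nabla u_g$ inherited from the weighted elliptic theory.
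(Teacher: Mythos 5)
Your proposal is correct and follows essentially the same route as the paper: reduce to the linear problem $(-4\Delta_g+R_g)u_g=-R_g$, get Fredholmness of index $0$ from \Cref{cor:lfi} plus compactness of the zeroth-order term, obtain invertibility from smallness/sign of $R_g$ (the paper phrases this as the coercivity estimate coming from the Hardy inequality of \Cref{thm:hi}), and prove minimality by the same completing-the-square argument combined with that coercivity. The only caveat is your appeal to ``$C_c^\infty(M)$ is dense in $C_\sigma^{2,\alpha}(M)$,'' which is false in the weighted H\"older norm topology; what you actually need is that cutoffs $\chi_R u_g$ make the quadratic remainder $\int_M \bigl(4|\nabla^g(v-u_g)|_g^2+R_g(v-u_g)^2\bigr)$ small, and this does hold because the integrands are $O(\rho_{g_0}^{-2\sigma-2})$ with $2\sigma+2>m$, so the fix is purely cosmetic (the paper's own ``density'' extensions of \eqref{eqn:hi} and \eqref{eqn:lcalf3} are of the same nature).
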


To justify the functional, we will need the following Hardy-type inequality.
\begin{theorem}[{\cite[Theorem 1.2]{Hei11}}]
    Let $(M, g)$ be an ALF manifold of order $\eta > 0$. Then there exists $C_H>0$ such that
    \begin{equation}
        \int_M |\nabla^{\overline{g}}\phi|_{g}^2 \ge C_H\int_M \dfrac{|\phi|^2}{\rho_{g_0}^2}  
        \label{eqn:hi}
    \end{equation}
    for all $\phi \in C_c^\infty(M)$.
    \label{thm:hi}
\end{theorem}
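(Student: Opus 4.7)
The plan is to apply the classical weight-function principle: for any smooth positive function $f$ on $M$ and any $\phi\in C_c^\infty(M)$, integration by parts yields
$$\int_M |\nabla^{g}\phi|_g^2 \;\geq\; -\int_M \frac{\Delta_g f}{f}\,\phi^2,$$
so it suffices to produce a positive $f$ satisfying $-\Delta_g f \geq C_H \rho_{g_0}^{-2} f$ pointwise. The natural candidate is $f = \rho_{g_0}^{-(m-2)/2}$, motivated by the sharp Euclidean Hardy weight in ambient dimension $m$.

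First I would verify the model identity at infinity. On the model end $g_0 = dr^2 + r^2\sigma + \theta^2$ the volume density factorizes so that the radial Laplacian reduces to $\Delta_{g_0} = \partial_r^2 + \tfrac{m-1}{r}\partial_r$, and a one-line ODE computation gives
$$\Delta_{g_0}\bigl(r^{-(m-2)/2}\bigr) \;=\; -\Bigl(\tfrac{m-2}{2}\Bigr)^2 \frac{r^{-(m-2)/2}}{r^2}.$$
Since $m\geq 3$ the exponent is positive and the sharp Hardy constant $((m-2)/2)^2$ appears. Next, I would transfer this inequality to $(M,g)$ on the end $\{\rho_{g_0}\geq R_0\}$ using the ALF decay hypothesis: the bounds $|\nabla^{g_0,k}(g-g_0)|_{g_0} = O(r^{-\eta-k})$ propagate to $|\Delta_g f - \Delta_{g_0}f|\leq C r^{-\eta-2}f$, so for $R_0$ large enough the inequality $-\Delta_g f \geq \tfrac{1}{2}((m-2)/2)^2 \rho_{g_0}^{-2}f$ persists on the end.

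For the compact core a naïve positive extension $\tilde f$ of $f$ may satisfy $-\Delta_g \tilde f<0$, so some care is required. The cleanest remedy is to switch to a globally positive superharmonic weight: let $G(\cdot,p_0)$ be the minimal positive Green's function of $-\Delta_g$, whose existence and asymptotic behaviour $G\sim c\rho_{g_0}^{-(m-2)}$, $|\nabla^g G|_g \sim c\rho_{g_0}^{-(m-1)}$ follow from \Cref{cor:lfi} combined with the asymptotic expansion in \Cref{prop:ale3} applied to $\pi_\ast G^\pi$ (the fiber-average, the fiber-orthogonal part decaying rapidly by \Cref{prop:gr}). Set $f := G^{1/2}$ on $M\setminus\{p_0\}$. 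Since $\Delta_g G = 0$ away from $p_0$, a direct computation gives
$$-\frac{\Delta_g f}{f} \;=\; \frac{|\nabla^g G|_g^2}{4 G^2},$$
which is nonnegative globally, asymptotic to $((m-2)/2)^2/\rho_{g_0}^2$ at infinity by the model asymptotics, and bounded below by $c\rho_{g_0}^{-2}$ on any compact set disjoint from $p_0$ by strict positivity of $|\nabla^g G|$ outside the zero set (which has no interior accumulation by unique continuation). A standard cutoff argument shows that the restriction $\phi\mapsto \phi$ vanishing near $p_0$ is dense enough in $C_c^\infty(M)$ that integrability near the pole is harmless, yielding the stated inequality.

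The main obstacle is the globalization step, i.e. patching the sharp weight from the end to the compact core. Using $G^{1/2}$ as the weight bypasses the transition issue because $G$ is globally positive and superharmonic by construction, but it imports the nontrivial input that the positive Green's function of an ALF Laplacian has the expected $\rho_{g_0}^{-(m-2)}$ leading order — this is precisely where the weighted Fredholm theory of \Cref{sec:fl} enters. If instead one insists on working directly with $f=\rho_{g_0}^{-(m-2)/2}$, the interior correction can be absorbed at the price of replacing $C_H$ by $C_H - \epsilon$ via a localization and Poincaré argument on a fixed annular region, but the Green's-function route is both cleaner and closer to sharp.
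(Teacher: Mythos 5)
First, a point of comparison: the paper does not prove \Cref{thm:hi} at all — it is quoted verbatim from \cite[Theorem 1.2]{Hei11} — so your argument has to stand on its own rather than match an internal proof. The first half of it does: the ground-state/weight principle $\int|\nabla^g\phi|_g^2\ge\int\frac{-\Delta_g f}{f}\phi^2$ is correct, the radial Laplacian of the model metric is indeed $\partial_r^2+\frac{m-1}{r}\partial_r$ (the fiber has $r$-independent length), the computation $\Delta_{g_0}r^{-(m-2)/2}=-\bigl(\tfrac{m-2}{2}\bigr)^2 r^{-(m-2)/2-2}$ is right, and the ALF decay does give $|\Delta_g f-\Delta_{g_0}f|\le Cr^{-\eta-2}f$, so the weight works on the end $\{\rho_{g_0}\ge R_0\}$.

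The genuine gap is in your main globalization step, the Green's function weight. The identity $-\Delta_g G^{1/2}/G^{1/2}=|\nabla^g G|_g^2/(4G^2)$ is fine, but this weight vanishes at every critical point of $G$, and away from the pole $G$ is a nonconstant harmonic function whose critical set is in general nonempty: whenever the small level sets of $G$ near $p_0$ (spheres $S^{m}$) are not diffeomorphic to the level sets near infinity (circle bundles over $\mathbb S^{m-1}$, e.g.\ lens spaces or $S^{m-1}\times S^1$), Morse theory forces critical points — and this is exactly the typical ALF situation (Taub-NUT/Taub-Bolt type topologies). Unique continuation only tells you the critical set is small (empty interior, measure zero); it does not make it empty, and the weight principle only delivers $\int|\nabla^g\phi|^2\ge\int\frac{|\nabla^g G|^2}{4G^2}\phi^2$. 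To conclude \eqref{eqn:hi} you need the \emph{pointwise} comparison $\frac{|\nabla^g G|^2}{4G^2}\ge c\,\rho_{g_0}^{-2}$ on all of $M\setminus\{p_0\}$, and this fails on the critical set, so the "cleaner" route does not close as written. (Secondary, fixable, but currently asserted rather than proved: existence of the minimal positive $G$ requires a non-parabolicity argument, e.g.\ the superharmonic barrier $\min(1,(r/R_0)^{-\delta})$; decay of $G$ at infinity must be established by a barrier before \Cref{prop:gr} and \Cref{prop:ale3} can be applied to its fiber average; and positivity of the leading coefficient $c$ in $G\sim c\rho_{g_0}^{-(m-2)}$ needs the flux identity $\int_{\partial B_R^{g_0}}\langle\nabla^g G,n\rangle=-1$.)

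The repair is essentially the fallback you dismiss in one sentence, and it should be the main argument: with the end weight $\rho_{g_0}^{-(m-2)/2}$ you get $\int_M|\nabla^g\phi|^2\ge c\int_{\{\rho_{g_0}\ge R_0\}}\rho_{g_0}^{-2}\phi^2 - C\int_{A}\phi^2$ after a cutoff at $r\approx R_0$ (the cross terms live on a fixed annulus $A$), and the compact core is absorbed via $\int_{B_{2R_0}^{g_0}}\phi^2\le C\bigl(\int_A\phi^2+\int_M|\nabla^g\phi|^2\bigr)$, which follows from a standard compactness/Poincaré argument (a sequence violating it would converge to a nonzero constant with vanishing trace on $A$, impossible). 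Since \Cref{thm:hi} only asks for \emph{some} $C_H>0$, the loss of sharpness is irrelevant. Written out, that localization argument is a complete and elementary proof; the Green's-function detour, as it stands, is not.
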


\begin{proof}[Proof of \Cref{prop:lcalf}]
    We claim that $-4\Delta_g + R_g \colon C_\sigma^{2,\alpha}(M) \to C_{\sigma+2}^{0,\alpha}(M)$ is an isomorphism. 
    Since $R_g \colon C_\sigma^{2,\alpha}(M) \to C_{\sigma+2}^{0,\alpha}(M)$ is a compact operator, by \Cref{cor:lfi}, $-4\Delta_g + R_g$ is a Fredholm operator of index 0. Then \eqref{eqn:lcalf3} ensures that the operator is indeed an isomorphism. 

    Since $g \in B_\epsilon(g_{\mathrm{RF}}; C_\sigma^{2,\alpha})$, $R_g \in C_{\sigma+2}^{0,\alpha}(M)$.
    Therefore, there exists a unique solution $v_g \in C_\sigma^{2,\alpha}(M)$ such that $-4\Delta_g v_g + R_g v_g = -R_g$. Then $w_g = 1+v_g$ is a unique solution to \eqref{eqn:lcalf2}.

    Assume from now that $g \in \cM_{\sigma}^{2,\alpha}(g_{\mathrm{RF}}, \epsilon)$.
    We first show that $\lambda_{\ALF}^\circ(g)$ is finite. Suppose that $g$ has integrable scalar curvature.
    By density argument, \eqref{eqn:hi} holds for $\phi \in C_\tau^{2,\alpha}(M)$ with $\tau \in (\frac{m-2}{2}, m-2)$.
    Note that for small enough $\epsilon > 0$, the metrics in $\cM_{\sigma}^{2,\alpha}(g_{\mathrm{RF}},\epsilon)$ are uniformly equivalent, and therefore \eqref{eqn:hi} uniformly holds for all metrics in $\cM_{\sigma}^{2,\alpha}(g_{\mathrm{RF}},\epsilon)$ for some fixed $C_H>0$ given $\epsilon$ is small enough. Together with the absorbing inequality, we get
    \begin{align*}
        \int_M (4|\nabla^g w|_g^2+R_gw^2)
        &= \int_M \left(4|\nabla^g(w-1)|_g^2 + R_g((w-1)+1)^2\right)\\
        &\ge 4C_H \int_M \dfrac{(w-1)^2}{\rho_{g_0}^2} - 2 \left(   \int_M |R_g|(w-1)^2 + \int_M |R_g| \right)\\
        &\ge 4C_H \int_M \dfrac{(w-1)^2}{\rho_{g_0}^2} - 2 \left(\sup_M |R_g|\rho_{g_0}^2\right) \int_M \dfrac{(w-1)^2}{\rho_{g_0}^2} - 2 \int_M |R_g|\\
        &\ge -2\int_M |R_g|,
    \end{align*}
    given $\norm{g - g_{\mathrm{RF}}}_{C_\sigma^{2,\alpha}(M)} < \epsilon$ with $\epsilon>0$ small enough.
    
    A similar argument also shows that
    \begin{align*}
        \int_M (4 |\nabla^g \phi|_g^2 + R_g\phi^2)
        &\ge \int_M 4 |\nabla^g\phi|_g^2 - \sup_M(|R_g|\rho_{g_0}^2) \int_M \dfrac{\phi^2}{\rho_{g_0}^2}\\
        &\ge \int_M 4 |\nabla^g\phi|_g^2 - C_H \sup_M(|R_g|\rho_{g_0}^2) \int_M |\nabla^g \phi|_g^2\\
        &\ge \int_M |\nabla^g \phi|_g^2.
    \end{align*}

    As a consequence, we get 
    \begin{equation}
        \langle -4\Delta_g \phi + R_g \phi, \phi \rangle \ge \norm{\nabla^g \phi}_{L^2}^2
        \label{eqn:lcalf3}
    \end{equation}
    for all $\phi \in C_c^\infty(M)$. By a density argument, \eqref{eqn:lcalf3} holds for $\phi \in C_\tau^{2,\alpha}(M)$ as well.

    Note that if $g$ has nonnegative scalar curvature, then we directly get that $\lambda_{\ALF}^\circ(g) \ge 0$ and \eqref{eqn:lcalf3} holds.

    Now we prove that $\lambda_{\ALF}^\circ$ is attained by $w_g$. To that end, it is enough to prove 
    \[
        \int_M (4|\nabla^g(w_g+\phi)|_g^2 + R_g(w_g+\phi)^2) \ge \int_M (4|\nabla^gw_g|_g^2 + R_gw_g^2)
    \]
    for all $\phi \in C_c^\infty(M)$. This is equivalent to
    \[
        \int_M 2(-4\Delta_g w_g + R_g w_g)\phi + (4|\nabla^g\phi|_g^2+R_g\phi^2) \ge 0,
    \]
    which follows from \eqref{eqn:lcalf2} and \eqref{eqn:lcalf3}.
\end{proof}

\begin{proposition}
    Let $(M, g$) be an ALF manifold with $m$, $\alpha$, $\eta$, $\sigma$, $g_0$, $g_{\mathrm{RF}}$ as in \Cref{ass:bs}.
    Suppose $g \in \cM_{\sigma}^{2,\alpha}(g_{\mathrm{RF}}, \epsilon)$ and
    let $w_g$ be as in \Cref{prop:lcalf}. Then $w_g$ is positive.
\end{proposition}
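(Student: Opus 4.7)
The plan is to combine a Kato-type variational argument with the strong maximum principle. Since $w_g - 1 \in C_\sigma^{2,\alpha}(M)$ decays at infinity, its negative part $w_g^- := \max(-w_g, 0)$ vanishes outside a compact set (namely outside $\{\rho_{g_0} \gg 1\}$, where $w_g > 1/2$). So $w_g^-$ is a nonnegative, compactly supported, globally Lipschitz function. My goal is first to show $w_g \ge 0$ by ruling out $w_g^- \not\equiv 0$, and then upgrade to $w_g > 0$ via the strong maximum principle.

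Suppose for contradiction that $w_g^- \not\equiv 0$. I would test the equation $-4\Delta_g w_g + R_g w_g = 0$ against $w_g^-$. Since $w_g^-$ is compactly supported and $w_g \in C^{2,\alpha}$, integration by parts is legitimate and gives
\[
0 = \int_M (-4\Delta_g w_g + R_g w_g)\, w_g^- = \int_M \bigl( 4\langle \nabla^g w_g, \nabla^g w_g^- \rangle_g + R_g\, w_g\, w_g^- \bigr).
\]
Now I invoke the Stampacchia chain rule: on $\{w_g > 0\}$, $w_g^-$ and $\nabla w_g^-$ vanish a.e., whereas on $\{w_g < 0\}$, $w_g = -w_g^-$ and $\nabla w_g = -\nabla w_g^-$. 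Substituting,
\[
0 = -\int_M \bigl(4|\nabla^g w_g^-|_g^2 + R_g (w_g^-)^2\bigr).
\]
By the coercivity estimate \eqref{eqn:lcalf3}, which extends from $C_c^\infty(M)$ to compactly supported $H^1$-functions by density, the right-hand side is bounded above by $-\|\nabla^g w_g^-\|_{L^2}^2$. Hence $\nabla^g w_g^- \equiv 0$, and compact support forces $w_g^- \equiv 0$, a contradiction. Therefore $w_g \ge 0$ on $M$.

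To upgrade to strict positivity, I rewrite the equation as $\Delta_g w_g = \tfrac{R_g}{4} w_g$, a linear elliptic equation with bounded coefficients (in fact with $R_g \in C_{\sigma+2}^{0,\alpha}$). If $w_g(p) = 0$ at some interior point $p$, then $p$ is a nonpositive interior minimum of the nonnegative $C^{2,\alpha}$-solution $w_g$, and the strong maximum principle (applied to $-\Delta_g + \tfrac{R_g}{4}$) forces $w_g \equiv 0$ on $M$, contradicting $w_g \to 1$ at infinity. Hence $w_g > 0$ everywhere.

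The main technical point to be careful about is the density step that extends \eqref{eqn:lcalf3} from $C_c^\infty(M)$ to compactly supported Lipschitz functions: one approximates $w_g^-$ by $C_c^\infty$ mollifications and uses the uniform equivalence of $g$ and $g_0$ on the compact support of $w_g^-$, together with the boundedness of $R_g$ there, to pass to the limit in $\widetilde{\cF}_{\ALF}(g, \cdot)$. Everything else is routine, and notably the argument does not require the sign assumption on $R_g$ beyond what is already embedded in the coercivity \eqref{eqn:lcalf3}, which was established under the hypotheses of $\cM_\sigma^{2,\alpha}(g_{\mathrm{RF}},\epsilon)$.
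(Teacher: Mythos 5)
Your argument is correct, and it reaches the same two milestones as the paper (first $w_g\ge 0$, then $w_g>0$) but by a genuinely different mechanism at each step. For nonnegativity, the paper observes that $|w_g|$ is again a weak solution of \eqref{eqn:lcalf2} (via the minimizing property/Kato), upgrades its regularity and decay, and then invokes the uniqueness of the solution furnished by the isomorphism $-4\Delta_g+R_g$ to conclude $w_g=|w_g|$; you instead test the equation against the compactly supported Lipschitz truncation $w_g^-$ and feed the resulting identity into the coercivity \eqref{eqn:lcalf3}. Your route has the advantage of bypassing the slightly delicate assertion that $|w_g|$ is a genuine weak solution (Kato's inequality alone only yields a subsolution; one really needs the minimizing property), at the cost of the density/mollification step you correctly flag, which is harmless because the support is compact and $R_g$ is bounded there; note also that \eqref{eqn:lcalf3} is exactly what is available under the hypothesis $g\in\cM^{2,\alpha}_\sigma(g_{\mathrm{RF}},\epsilon)$, so no extra sign condition on $R_g$ is smuggled in. For strict positivity, the paper runs a parabolic strong maximum principle on $W_g(x,t)=e^{(\sup_M R_g/4)t}w_g(x)$, while you use the elliptic strong maximum principle directly; since the zeroth-order coefficient $R_g/4$ has no sign, you should add the standard one-line absorption, namely that $w_g\ge 0$ solves $\Delta_g w_g-\tfrac{R_g^+}{4}w_g=-\tfrac{R_g^-}{4}w_g\le 0$, so the Hopf minimum principle (or Harnack for nonnegative solutions) applies to an operator with nonpositive zeroth-order term; with that remark the vanishing of $w_g$ at one point forces $w_g\equiv 0$ on the (connected) manifold, contradicting $w_g\to 1$ at infinity. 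This is a presentational refinement rather than a gap.
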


\begin{proof}
    Note that $|w_g|$ is a weak solution to \eqref{eqn:lcalf2}. Then elliptic regularity gives $|w_g| \in C_{\loc}^{2,\alpha}(M)$, and then Kato inequality gives $|w_g| \in C_\sigma^{2,\alpha}(M)$. Since the equation has a unique solution, $w_g = |w_g|$, i.e. $w_g$ is nonnegative. Now define 
    \[
        W_g(x,t) = \exp\left( \dfrac{\sup_M R_g}{4} t \right) w_g(x),
    \]
    then a direct computation gives that 
    \[
        (\partial_t - \Delta_g)W_g \ge 0,
    \]
    so by strong maximum principle, $w_g$ is positive.
\end{proof}

We will perform multiple integration by parts. To simplify notation, we introduce the following shorthand: although these are not intrinsic balls, we denote
\begin{align*}
    B_R^{g_0} &:= \{\rho_{g_0} < R\} \subset M,\\
    \partial B_R^{g_0} &:= \{\rho_{g_0} = R\} \subset M.
\end{align*}

\begin{proposition}
    Let $(M, g$) be an ALF manifold with $m$, $\alpha$, $\eta$, $\sigma$, $g_0$, $g_{\mathrm{RF}}$ as in \Cref{ass:bs}.
    Suppose $g \in \cM_{\sigma}^{2,\alpha}(g_{\mathrm{RF}}, \epsilon)$ and
    let $w_g$ be as in \Cref{prop:lcalf}. Then 
    \[
        \lambda_{\mathrm{ALF}}^\circ(g) = \int_M (4|\nabla^g w_g|_g^2 + R_gw_g^2) = \int_M R_gw_g.
    \]
    \label{prop:rw}
\end{proposition}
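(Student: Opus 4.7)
The plan is to combine the variational characterization of $w_g$ from the previous proposition with integration by parts against the Euler--Lagrange equation $-4\Delta_g w_g + R_g w_g = 0$, and to control the resulting boundary terms using the ALF decay rates.

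For the first equality, I would appeal to the previous proposition's identification of $w_g$ as the minimizer, so that $\lambda_{\ALF}^\circ(g) = \widetilde{\cF}_{\ALF}(g, w_g) = \int_M(4|\nabla^g w_g|_g^2 + R_g w_g^2)$. Since $w_g - 1 = v_g \in C_\sigma^{2,\alpha}$ is not compactly supported while the infimum is taken over $w - 1 \in C_c^\infty$, I would approximate by $w^R := 1 + \chi_R v_g$ with $\chi_R$ a smooth cutoff equal to $1$ on $B_R^{g_0}$ and vanishing outside $B_{2R}^{g_0}$. The decay $v_g = O(\rho^{-\sigma})$, $\nabla^g v_g = O(\rho^{-\sigma-1})$ with $\sigma > (m-2)/2$, together with integrability of $R_g$ in the admissible regime, give $\widetilde{\cF}_{\ALF}(g, w^R) \to \int_M (4|\nabla^g w_g|_g^2 + R_g w_g^2)$ by dominated convergence, and combining with the minimality of $w_g$ identifies the infimum.

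For the second equality, the key step will be integration by parts on the sublevel set $B_R^{g_0}$ against the identity $4\Delta_g w_g = R_g w_g$. Expanding $\nabla\cdot(w_g\nabla^g w_g) = |\nabla^g w_g|_g^2 + w_g \Delta_g w_g$ and using \eqref{eqn:lcalf2} yields
\[
\int_{B_R^{g_0}} \bigl(4|\nabla^g w_g|_g^2 + R_g w_g^2\bigr) = \int_{\partial B_R^{g_0}} 4 w_g \,\partial^g_\nu w_g\, dA^g.
\]
Writing $w_g = 1 + v_g$ on the boundary and applying the divergence theorem to the constant piece,
\[
\int_{\partial B_R^{g_0}} 4 \partial_\nu^g w_g\, dA^g = \int_{B_R^{g_0}} 4\Delta_g w_g = \int_{B_R^{g_0}} R_g w_g,
\]
this will rearrange into
\[
\int_{B_R^{g_0}} \bigl(4|\nabla^g w_g|_g^2 + R_g w_g^2\bigr) = \int_{B_R^{g_0}} R_g w_g + \int_{\partial B_R^{g_0}} 4 v_g \,\partial^g_\nu w_g\, dA^g.
\]

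The remaining task is to show that the boundary integral vanishes as $R \to \infty$. The ALF structure gives $\mathrm{Area}_g(\partial B_R^{g_0}) = O(R^{m-1})$ since one direction is a bounded $S^1$-fiber rather than a growing factor; combined with $|v_g| = O(\rho^{-\sigma})$ and $|\nabla^g w_g|_g = |\nabla^g v_g|_g = O(\rho^{-\sigma-1})$, the boundary integral is $O(R^{m-2-2\sigma})$, which tends to zero because $\sigma > (m-2)/2$. Passing $R\to\infty$, the left-hand side converges to $\lambda_{\ALF}^\circ(g)$ (finite by step one), so $\int_M R_g w_g := \lim_R \int_{B_R^{g_0}} R_g w_g$ exists and equals it. The main obstacle is that $R_g$ need not be absolutely integrable in the nonnegative case, so $\int_M R_g w_g$ has to be interpreted as this improper limit along the exhaustion by the $B_R^{g_0}$; the identity $R_g w_g = 4\Delta_g v_g$ recasts the convergence as a boundary flux problem, which is exactly what the ALF decay rates $\sigma > (m-2)/2$ resolve.
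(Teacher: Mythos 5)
Your proposal is correct and follows essentially the same route as the paper: integrate by parts on $B_R^{g_0}$, use $-4\Delta_g w_g + R_g w_g = 0$ to kill the bulk term, split $w_g = 1 + (w_g-1)$ in the boundary flux, convert the constant piece back to $\int_{B_R^{g_0}} 4\Delta_g w_g = \int_{B_R^{g_0}} R_g w_g$ by the divergence theorem, and discard the $(w_g-1)\nabla^g w_g$ boundary term using the ALF area growth $O(R^{m-1})$ and $\sigma > \frac{m-2}{2}$. Your explicit cutoff approximation for the first equality and the improper-limit interpretation of $\int_M R_g w_g$ are harmless additions; the paper simply cites \Cref{prop:lcalf} for the first equality and leaves the decay estimate implicit.
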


\begin{proof}
    The first equality is proven in \Cref{prop:lcalf}.
    For the second equality, we use integration by parts multiple times and observe $|(w_g-1)\nabla^g w_g|$ decays fast:
    \begin{align*}
        \int_M (4|\nabla^g w_g|_g^2 + R_gw_g^2)
        &= \lim_{R \to \infty} \left[\int_{ B_R^{g_0} } (-4\Delta_g w_g + R_gw_g)w_g + 4\int_{\partial B_R^{g_0}} \langle \nabla^g w_g, n_g \rangle_g w_g \right]\\
        &= \lim_{R \to \infty} \left[ 4\int_{\partial B_R^{g_0}} \left(\langle \nabla^g w_g, n_{g_0} \rangle_g + \langle (w_g-1)\nabla^g w_g, n_g \rangle_g \right) \right]\\
        &= \lim_{R \to \infty}4 \int_{B_R^{g_0}} \Delta_g w_g
        = \lim_{R \to \infty} \int_{B_R^{g_0}} R_gw_g
        = \int_M R_gw_g.
    \end{align*}
\end{proof}

\subsection{Relative mass and $\lambda_{\ALF}$ beyond scalar curvature assumptions}
Unlike the ALE setting, where we have the ADM mass, there is no universally accepted analogue in the ALF setting. By choosing a suitable reference metric, we introduce a relative mass, which serves the role of the ADM mass for our purposes.

\begin{proposition}
    Let $(M, g$) be an ALF manifold with $m$, $\alpha$, $\eta$, $\sigma$, $g_0$, $g_{\mathrm{RF}}$ as in \Cref{ass:bs}.
    If $g, g+h \in \cM_{\sigma}^{2,\alpha}(g_{\mathrm{RF}}, \epsilon)$, then the limit defining \emph{relative mass}
    \begin{equation}
        m(g+h, g) := \lim_{R \to \infty} \int_{\partial B_R^{g_0}} \langle \div_g h - \nabla^g \Tr_g h, n_g \rangle_{g}
        \label{eqn:rm}
    \end{equation}
    exists.
    \label{prop:rme}
\end{proposition}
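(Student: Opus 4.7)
The plan is a divergence-theorem argument that reduces convergence of the boundary integral to integrability of a specific volume quantity on the end, in the spirit of Bartnik's classical argument for the well-definedness of the ADM mass. Fix $R_0$ large enough that $g$ and $g+h$ both differ from $g_{\mathrm{RF}}$ by $O(\rho_{g_0}^{-\sigma})$ on $M \setminus B_{R_0}^{g_0}$ and that $g_{\mathrm{RF}}$ is Ricci-flat there. Viewing $X := \div_g h - \nabla^g \Tr_g h$ as a vector field, Stokes' theorem on the annular region $B_R^{g_0} \setminus B_{R_0}^{g_0}$ gives
\[
    \int_{\partial B_R^{g_0}} \langle X, n_g \rangle_g\, dA_g = \int_{\partial B_{R_0}^{g_0}} \langle X, n_g \rangle_g\, dA_g + \int_{B_R^{g_0} \setminus B_{R_0}^{g_0}} \div_g X\, dV_g,
\]
so existence of the boundary-integral limit as $R \to \infty$ is equivalent to convergence of $\int_{M \setminus B_{R_0}^{g_0}} \div_g X\, dV_g$.

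To control $\div_g X$ I use the classical scalar-curvature linearization
\[
    DR_g(h) = \div_g \div_g h - \Delta_g \Tr_g h - \langle \Ric_g, h \rangle_g,
\]
which gives $\div_g X = DR_g(h) + \langle \Ric_g, h \rangle_g$. Taylor expanding $R_{g+h} = R_g + DR_g(h) + Q(g,h)$, where $Q(g,h)$ collects the terms of order $\geq 2$ in $(h, \nabla^g h, \nabla^{g,2} h)$, yields
\[
    \div_g X = (R_{g+h} - R_g) + \langle \Ric_g, h \rangle_g - Q(g,h).
\]
The two remainder pieces are integrable on the end: since $g_{\mathrm{RF}}$ is Ricci-flat on $M \setminus K$ and $g - g_{\mathrm{RF}} \in C_\sigma^{2,\alpha}$, one obtains $|\Ric_g|_g = O(\rho_{g_0}^{-\sigma-2})$, hence $|\langle \Ric_g, h \rangle_g| = O(\rho_{g_0}^{-2\sigma-2})$; the remainder $Q$ admits the pointwise bound $|Q(g,h)|_g \lesssim |h|_g\, |\nabla^{g,2} h|_g + |\nabla^g h|_g^2 + |h|_g^2\, |\Rm_g|_g = O(\rho_{g_0}^{-2\sigma-2})$. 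The ALF end has volume element $dV_g \asymp \rho_{g_0}^{m-1}\, d\rho_{g_0}$, and the hypothesis $\sigma > (m-2)/2$ gives $2\sigma + 2 > m$, so both remainder terms are absolutely integrable on $M \setminus B_{R_0}^{g_0}$.

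The only potentially non-integrable piece is the leading $R_{g+h} - R_g$, which is controlled by the defining conditions of $\cM_\sigma^{2,\alpha}(g_{\mathrm{RF}}, \epsilon)$: in the case both scalar curvatures satisfy $R = O(\rho_{g_0}^{-\eta'})$ with $\eta' > m$, the difference is in $L^1$ and the integral converges absolutely; in the nonnegativity case, the monotone maps $R \mapsto \int_{B_R^{g_0}} R_g\, dV_g$ and its analogue for $g+h$ each have a limit in $[0, +\infty]$, giving a limit of the difference in the extended real line. The main obstacle is the second step: verifying the claimed pointwise $O(\rho_{g_0}^{-2\sigma-2})$ bound on $Q(g,h)$ requires carefully expanding the coordinate formula for $R_g$ in $g^{-1}$ and the Christoffel-symbol differences $\Gamma_{g+h} - \Gamma_g$, and schematically collecting the pieces of order $\geq 2$ in $(h, \nabla^g h, \nabla^{g,2} h)$. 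Once this pointwise estimate is in hand, the ALF volume-growth estimate together with the threshold $\sigma > (m-2)/2$ immediately yields the required absolute integrability, and the proposition follows.
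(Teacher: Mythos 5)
Your proposal follows essentially the same route as the paper: reduce the boundary integral by the divergence theorem to $\lim_{R\to\infty}\int_{B_R^{g_0}}\big[R_{g+h}-R_g+\langle h,\Ric_g\rangle_g-Q\big]$ via the linearization of scalar curvature, observe that $\Ric_g=O(\rho_{g_0}^{-\sigma-2})$ and that the quadratic remainder decays like $\rho_{g_0}^{-2\sigma-2}$, and use the ALF volume growth together with $2\sigma+2>m$ (i.e.\ $\sigma>\frac{m-2}{2}$) plus the scalar-curvature conditions defining $\cM_\sigma^{2,\alpha}(g_{\mathrm{RF}},\epsilon)$ to handle the leading term. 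Two comments on where your write-up falls short of the paper's proof.

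First, the step you call ``the main obstacle'' --- the pointwise bound $|Q(g,h)|=O(\rho_{g_0}^{-2\sigma-2})$ --- does not require expanding the coordinate formula for the scalar curvature. The paper writes the remainder in integral form, $R_{g+h}-R_g=\int_0^1\delta_{g+th}R_{g+th}(h)\,dt$, so that $Q_g(h)$ is the sum of $\int_0^1\big[(\div_{g+th}\div_{g+th}h-\Delta_{g+th}\Tr_{g+th}h)-(\div_g\div_g h-\Delta_g\Tr_g h)\big]dt$ and $-\int_0^1\big[\langle h,\Ric_{g+th}\rangle_{g+th}-\langle h,\Ric_g\rangle_g\big]dt$; since $g+th-g=th\in C_\sigma^{2,\alpha}$ and $\Ric_{g+th}=O(\rho_{g_0}^{-\sigma-2})$ uniformly in $t$, each bracket is $O(\rho_{g_0}^{-2\sigma-2})$, which is exactly the bound you want. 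So this is a short verification rather than a genuine difficulty, and you should carry it out (or quote it) rather than leave it as an acknowledged gap.

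Second, your handling of the branch $R\ge 0$ contains a non sequitur: if both $\int_{B_R^{g_0}}R_{g+h}$ and $\int_{B_R^{g_0}}R_g$ diverge, monotonicity of each does \emph{not} give a limit of their difference, even in the extended real line (take $a(R)=2R+\sin R$ and $b(R)=2R$: both are monotone and divergent, while $a-b$ oscillates). To be fair, the paper's own proof is equally terse at this point, simply citing the definition \eqref{eqn:nbhd}; but as written your inference ``each limit exists, hence the limit of the difference exists'' is false in general, and you should either restrict to the interpretation actually needed (both integrals finite, or exactly one infinite so the mass is $\pm\infty$) or supply an additional argument for the case where both diverge.
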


\begin{proof}
    Recall that the first variation of scalar curvature is 
    \[
        \delta_g R_g(h) = \div_g(\div_g h) - \Delta_g \Tr_g h - \langle h, \Ric_g \rangle_g.
    \]
    Integrating the formula, we get
    \begin{align}
        R_{g+h} - R_g &= \int_0^1 \delta_{g+th} R_{g+th}(h) dt\nonumber\\
        &= \int_0^1 (\div_{g+th}\div_{g+th}h-\Delta_{g+th}\Tr_{g+th}h - \langle h, \Ric_{g+th} \rangle_{g+th})dt\nonumber\\
        &=: \div_g \div_g h - \Delta_g \Tr_g h - \langle h, \Ric_g \rangle_g + Q_g(h). \label{eqn:rme1}
    \end{align}
    Since we have
    \begin{align*}
        &(\div_{g+th}\div_{g+th}h - \Delta_{g+th}\Tr_{g+th}h) - (\div_g\div_g h - \Delta_g \Tr_g h) = O(\rho_{g_0}^{-2\sigma-2}),\\
        &\langle h, \Ric_{g+th} \rangle_{g+th} - \langle h, \Ric_g \rangle_g = O(\rho_{g_0}^{-2\sigma-2}),
    \end{align*}
    we conclude that $Q_g(h) \in O(\rho_{g_0}^{-2\min(\eta,\sigma)-2})$. In particular, $Q_g(h)$ is integrable. 

    By Integration by parts,
    \begin{align*}
        m(g+h, g) &= \lim_{R \to \infty} \int_{B_R^{g_0}} \div_g(\div_g h - \nabla^g \Tr_g h)\\
        &= \lim_{R \to \infty} \int_{B_R^{g_0}} \left[ R_{g+h} - R_g + \langle h, \Ric_g\rangle_g - Q_g(h) \right],
    \end{align*}
    and the limit exists by \eqref{eqn:nbhd} and  $\langle h, \Ric_g \rangle_g = O(\rho_{g_0}^{-2\sigma-2})$.
\end{proof}

\begin{definition}
    Let $(M, g$) be an ALF manifold with $m$, $\alpha$, $\eta$, $\sigma$, $g_0$, $g_{\mathrm{RF}}$ as in \Cref{ass:bs}. For $f \in C_\tau^{2,\alpha}(M)$ with $\tau \in (\frac{m-2}{2}, m-2)$, define 
    \[
        \cF_{\ALF}(g,f) = \int_M (|\nabla^g f|_g^2 + R_g)e^{-f}.
    \]
\end{definition}

\begin{remark}
    $\cF_{\ALF}(g,f) = \widetilde{\cF}_{\ALF}(g, e^{-f/2})$.
\end{remark}

\begin{proposition}
    Let $(M, g$) be an ALF manifold with $m$, $\alpha$, $\eta$, $\sigma$, $g_0$, $g_{\mathrm{RF}}$ as in \Cref{ass:bs}.
    For $\tau \in (\frac{m-2}{2}, m-2)$, $(g,f) \in \cM_{\sigma}^{2,\alpha}(g_{\mathrm{RF}}, \epsilon) \times C_\tau^{2,\alpha}(M)$, and $(h, \phi) \in C_\sigma^{2,\alpha}(S^2T^\ast M) \times C_\tau^{2,\alpha}(M)$ with $g+h \in \cM_{\sigma}^{2,\alpha}(g_{\mathrm{RF}}, \epsilon)$, 
    \begin{align}
        \delta_{g,f} \cF_{\ALF}(h, \phi) &= -\int_M \langle\Ric_f, h\rangle_g e^{-f} 
        + \int_M R_f\left( \frac{1}{2}\Tr_g h - \phi \right)e^{-f}
        + m(g+h,g), \label{eqn:vff}\\
        \delta_g \left[\lambda_{\ALF}^\circ(g)\right](h) &= -\int_M \langle \Ric_{f_g},h\rangle_g e^{-f_g} + m(g+h,g). \label{eqn:vfl}
    \end{align}
    \label{prop:vf}
\end{proposition}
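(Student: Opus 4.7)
The plan is to mimic Perelman's classical first-variation computation on closed manifolds, but carried out on the exhaustion $\{B_R^{g_0}\}_{R>0}$, so that each integration by parts produces a boundary term on $\partial B_R^{g_0}$. Of all the resulting boundary contributions, exactly one will survive the limit $R\to\infty$, and it will be precisely the relative mass $m(g+h,g)$; the remaining boundary terms will vanish thanks to the weighted Hölder decay hypotheses.

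Starting from the standard pointwise variational formulas
\[
\delta R_g(h) = \div_g\div_g h - \Delta_g \Tr_g h - \langle \Ric_g, h\rangle_g,\qquad \delta |\nabla^g f|_g^2 = -h(\nabla^g f, \nabla^g f) + 2\langle \nabla^g f, \nabla^g \phi\rangle_g,
\]
together with $\delta(e^{-f}) = -\phi\, e^{-f}$ and $\delta(dV_g) = \tfrac{1}{2}\Tr_g h\, dV_g$, I would integrate over $B_R^{g_0}$. Applying Stokes to $\int_{B_R^{g_0}}(\div_g\div_g h - \Delta_g \Tr_g h)e^{-f}$ produces the natural boundary term $\int_{\partial B_R^{g_0}}\langle \div_g h - \nabla^g \Tr_g h, n_g\rangle_g\, e^{-f}$; successive integrations by parts on the resulting interior terms, combined with the identity $2\langle \nabla^g f, \nabla^g \phi\rangle_g e^{-f} = 2\div_g(\phi\nabla^g f\, e^{-f}) - 2\phi(\Delta_g f - |\nabla^g f|_g^2)e^{-f}$, recover in the interior the compact Perelman density $-\langle \Ric_f, h\rangle_g e^{-f} + R_f(\tfrac{1}{2}\Tr_g h - \phi)e^{-f}$. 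The extra boundary contributions thereby generated are of three types: $h(\nabla^g f, n_g)e^{-f}$, $\Tr_g h\cdot \langle \nabla^g f, n_g\rangle_g e^{-f}$, and $\phi\langle \nabla^g f, n_g\rangle_g e^{-f}$.

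It then remains to control the four boundary terms on $\partial B_R^{g_0}$, whose area grows like $R^{m-1}$. Using $h \in C_\sigma^{2,\alpha}$ with $\nabla^g h = O(\rho_{g_0}^{-\sigma-1})$ and $f, \phi \in C_\tau^{2,\alpha}$ with $\nabla^g f = O(\rho_{g_0}^{-\tau-1})$, the three extra boundary integrals are $O(R^{m-2-\sigma-\tau})$ or $O(R^{m-2-2\tau})$; these vanish since the assumptions $\sigma,\tau \in (\tfrac{m-2}{2}, m-2)$ force $\sigma+\tau > m-2$ and $2\tau > m-2$. For the principal boundary term, the same decay comparison gives $\int_{\partial B_R^{g_0}}\langle \div_g h - \nabla^g \Tr_g h, n_g\rangle_g (e^{-f}-1) = O(R^{m-2-\sigma-\tau}) = o(1)$, so by \Cref{prop:rme} it converges to $m(g+h, g)$. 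Assembling the pieces yields \eqref{eqn:vff}. For \eqref{eqn:vfl}, recall from \Cref{prop:lcalf} that $w_g = e^{-f_g/2}$ satisfies $-4\Delta_g w_g + R_g w_g = 0$, which is equivalent to $R_{f_g}=0$, and that $f_g$ is a critical point of $\cF_{\ALF}(g,\cdot)$, so the $\phi$-derivative of $\cF_{\ALF}$ at $(g, f_g)$ vanishes. Consequently $\delta_g[\lambda_{\ALF}^\circ(g)](h) = \delta_{g,f_g}\cF_{\ALF}(h, 0)$, and the $R_f(\tfrac{1}{2}\Tr_g h)$-term in \eqref{eqn:vff} drops because $R_{f_g}=0$, yielding \eqref{eqn:vfl}.

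The main obstacle is the bookkeeping: carefully tracking every boundary term that arises during repeated integration by parts and verifying that only $m(g+h,g)$ survives. This comes down entirely to the weight inequalities $\sigma+\tau > m-2$ and $2\tau > m-2$, which are exactly what the assumed ranges on $\sigma$ and $\tau$ guarantee and which explain the precise choice of weights made throughout the paper.
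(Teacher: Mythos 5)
Your proposal is correct and follows essentially the same route as the paper's proof: compute the variation of the density $(|\nabla^g f|_g^2+R_g)e^{-f}dV_g$ with the standard formulas, integrate by parts over $B_R^{g_0}$, identify the single surviving boundary term with $m(g+h,g)$ via \Cref{prop:rme} while the remaining boundary terms vanish by the weight count $\sigma+\tau>m-2$, $2\tau>m-2$, and then obtain \eqref{eqn:vfl} from $R_{f_g}=0$ so that the $\phi$-direction (i.e.\ the $\delta_g[f_g](h)$ contribution in the chain rule) drops out. Your explicit verification that replacing $e^{-f}$ by $1$ in the principal boundary term only costs $O(R^{m-2-\sigma-\tau})$ spells out a step the paper leaves implicit.
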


We record here several variation formulas that will be used in the proof.
\begin{lemma}
    \begin{align}
        \delta_{g} R_g (h) &= \div_g (\div_g h) -\Delta_g \Tr_g h - \langle h, \Ric_g \rangle_g,\label{eqn:vfs1}\\
        \delta_{g,f}|\nabla^g f|_g^2 (h,\phi) &= -h(\nabla^g f, \nabla^g f) + 2\langle \nabla^g f, \nabla^g \phi \rangle_g,\\
        \delta_{g,f} [e^{-f}dV_g] (h,\phi) &= \left( \frac{1}{2}\Tr_gh - \phi \right)e^{-f}dV_g.
    \end{align}
    \label{lem:vfs}
\end{lemma}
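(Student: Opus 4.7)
These three identities in \Cref{lem:vfs} are pointwise variation formulas in local coordinates and require no global analysis; the plan is to verify each one by a direct computation in a local frame.

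For the first formula, I write $R_g = g^{ij}R_{ij}$ and split
\[
    \delta_g R_g(h) = (\delta_g g^{ij})(h)\, R_{ij} + g^{ij}\, \delta_g R_{ij}(h).
\]
Since $\delta_g g^{ij}(h) = -h^{ij}$, the first term produces $-\langle h, \Ric_g\rangle_g$. For the second, I use the Palatini identity $\delta R_{ij} = \nabla_k(\delta \Gamma^k_{ij}) - \nabla_j(\delta \Gamma^k_{ik})$ and the standard expression
\[
    \delta_g \Gamma^k_{ij}(h) = \tfrac{1}{2}g^{kl}\bigl(\nabla_i h_{jl} + \nabla_j h_{il} - \nabla_l h_{ij}\bigr).
\]
Contracting against $g^{ij}$ and using the symmetry of $h$ to cancel cross terms gives $g^{ij}\delta\Gamma^k_{ij} = (\div_g h)^k - \tfrac{1}{2}\nabla^k \Tr_g h$ and $g^{ij}\delta\Gamma^k_{ik} = \tfrac{1}{2}\nabla^j \Tr_g h$. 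Taking divergences and combining produces $\div_g(\div_g h) - \Delta_g \Tr_g h$, completing the formula.

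For the second identity, I write $|\nabla^g f|_g^2 = g^{ij}\partial_i f\, \partial_j f$ and differentiate in both $g$ and $f$: the inverse-metric variation contributes $-h^{ij}\partial_i f\, \partial_j f = -h(\nabla^g f,\nabla^g f)$, and the variation $\delta f = \phi$ contributes $2 g^{ij}\partial_i f\, \partial_j \phi = 2\langle \nabla^g f,\nabla^g \phi\rangle_g$. For the third, I combine the classical measure variation $\delta_g(dV_g)(h) = \tfrac{1}{2}\Tr_g h\, dV_g$ (from $\delta \log \det g = \Tr_g h$) with $\delta_f(e^{-f})(\phi) = -\phi\, e^{-f}$ via the product rule.

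No step is a real obstacle: everything is algebraic and pointwise, and the only place where care is needed is the Palatini-type contraction in (4.7), where one must keep track of the symmetry $k \leftrightarrow l$ in the Christoffel variation to ensure the correct coefficient $\tfrac{1}{2}$ on $\Delta_g \Tr_g h$. Sign conventions for $\delta g^{ij}$ and $\delta \log \det g$ are the only other bookkeeping concerns.
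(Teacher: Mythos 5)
Your computation is correct: all three formulas follow exactly as you describe, and the signs ($\delta_g g^{ij}(h) = -h^{ij}$, $\delta_g \log\det g\,(h) = \Tr_g h$) and the Palatini contraction are handled properly, yielding the standard first-variation identities. The paper states \Cref{lem:vfs} without proof, treating these as classical variation formulas (as in the references for the ALE case), so your direct local-coordinate verification is essentially the intended, standard justification.
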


\begin{proof}[Proof of \Cref{prop:vf}]
    By \Cref{lem:vfs},
    \begin{align*}
        &\delta_{g,f}[(|\nabla^g f|_g^2 + R_g)e^{-f}dV_g](h,\phi)\\
        &\quad = \left[ -h(\nabla^g f, \nabla^g f) + (\div_g (\div_g h) - \Delta_g \Tr_g h - \langle \Ric_g, h \rangle_g) + 2\langle \nabla^g f, \nabla^g \phi \rangle_g\right]e^{-f}dV_g\\
        &\quad\quad + (|\nabla^g f|_g^2+R_g) \left( \frac{1}{2}\Tr_g h - \phi \right) e^{-f}dV_g.
    \end{align*}

    By Integration by parts:
    \begin{align*}
        \int_{B_R^{g_0}} \div_g(\div_g h)e^{-f}
        &= \int_{B_R^{g_0}} (\div_g h)(\nabla^g f)e^{-f} + \int_{\partial B_R^{g_0}}\langle \div_g h, n_g \rangle_g e^{-f}\\
        &= \int_{B_R^{g_0}} \left( h(\nabla^g f, \nabla^g f) - \langle h, \nabla^{g,2}f \rangle_g \right)e^{-f}
        + \int_{\partial B_R^{g_0}} \langle \div_g h + h(\nabla^g f), n_g \rangle_g e^{-f},
        \\
        \int_{B_R^{g_0}} (\Delta_g \Tr_g h)e^{-f}
        &= \int_{B_R^{g_0}} \langle \nabla^g \Tr_g h, \nabla^g f \rangle_g e^{-f} + \int_{\partial B_R^{g_0}} \langle \nabla^g \Tr_g h, n_g \rangle_g e^{-f}\\
        &= -\int_{B_R^{g_0}} (\Tr_g h)(\Delta_g f - |\nabla^g f|_g^2)e^{-f}
        + \int_{\partial B_R^{g_0}} \langle \nabla^g \Tr_g h + (\Tr_g h)\nabla^g f, n_g \rangle_g e^{-f},
        \\
        \int_{B_R^{g_0}} \langle \nabla^g f, \nabla^g \phi \rangle_g e^{-f}
        &= -\int_{B_R^{g_0}} (\Delta_g f - |\nabla^g f|_g^2)\phi e^{-f} + \int_{\partial B_R^{g_0}} \langle \nabla^g f, n_g \rangle_g \phi e^{-f}.
    \end{align*}

As a consequence, we get 
\begin{align}
    &\int_{B_R^{g_0}} \delta_{g,f}\left[ (|\nabla^g f|_g^2+R_g)e^{-f}dV_g \right](h,\phi)\label{eqn:vfb}\\
    &= \int_{B_R^{g_0}} -\langle \Ric_{f_g}, h\rangle_g e^{-f}
    + \int_{B_R^{g_0}} (2\Delta_g f - |\nabla^g f|_g^2+R_g)\left( \frac{1}{2}\Tr_g h - \phi \right)e^{-f}\nonumber\\
    &\quad + \int_{\partial B_R^{g_0}} \langle \div_g h - \nabla^g \Tr_g h, n_g \rangle e^{-f}
    + \int_{\partial B_R^{g_0}} \langle h(\nabla^g f) - (\Tr_g h) \nabla^g f + 2\phi\nabla^g f, n_g \rangle_g e^{-f}. \nonumber
\end{align}
Since 
\begin{equation}
    h (\nabla^g f) - (\Tr_gh)\nabla^g f + 2\phi \nabla^g f = O(\rho_{g_0}^{-\sigma-\tau-2}),
    \label{eqn:vfu}
\end{equation}
we conclude \eqref{eqn:vff}.

To compute the variation of $\lambda_{\ALF}^\circ$, we first define 
\begin{equation}
    f_g = -2\log w_g
    \label{eqn:fg}
\end{equation}
where $w_g$ is as in \Cref{prop:lcalf}. 
Then following \eqref{eqn:lcalf2},
\begin{equation}
    R_{f_g} = 2\Delta_g f_g - |\nabla^g f_g|_g^2 + R_g = 0.
    \label{eqn:fge}
\end{equation}
Since $\lambda_{\ALF}^\circ(g) = \widetilde{\cF}_\ALF(g, w_g) = \cF_\ALF(g, f_g)$,
\[
    \delta_g \left[\lambda_{\ALF}^\circ(g)\right](h) = \delta_{g, f_g} \cF_\ALF(h, \delta_g[f_g](h)),
\]
so together with \eqref{eqn:fge}, \eqref{eqn:vfl} follows.
\end{proof}

\begin{definition}
    Let $(M, g$) be an ALF manifold with $m$, $\alpha$, $\eta$, $\sigma$, $g_0$, $g_{\mathrm{RF}}$ as in \Cref{ass:bs}.
    For $g \in \cM_{\sigma}^{2,\alpha}(g_{\mathrm{RF}}, \epsilon)$, define a renormalized Perelman's $\lambda$-functional, denoted by $\lambda_{\ALF}(g, \overline{g})$, by 
    \[
        \lambda_{\ALF}(g, \overline{g}) = \lambda_{\ALF}^\circ(g) - m(g, \overline{g}).
    \]
\end{definition}

An important property of relative mass is its additivity: changing the choice of reference metric shifts the relative mass only by an additive constant, and consequently shifts $\lambda_{\ALF}$ by negative of the same constant.
\begin{proposition}[Additivity of relative mass]
    Let $(M, \overline{g}$) be an ALF manifold with $m$, $\alpha$, $\eta$, $\sigma$, $g_0$, $g_{\mathrm{RF}}$ as in \Cref{ass:bs}.
    Then for $g_1, g_2, g_3 \in \cM_{\sigma}^{2,\alpha}(g_{\mathrm{RF}}, \epsilon)$, 
    \[
        m(g_1, g_2) + m(g_2, g_3) = m(g_1, g_3).
    \]
    \label{prop:rml}
\end{proposition}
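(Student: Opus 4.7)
The plan is to exploit the linearity in the perturbation tensor of the boundary integrand defining the relative mass, and to control the error introduced by changing the reference metric. Write $h_{ij} := g_i - g_j$, so that $h_{13} = h_{12} + h_{23}$, and for a reference metric $g'$ and a symmetric $2$-tensor $h$ set
$$U_{g'}(h) := \langle \div_{g'} h - \nabla^{g'} \Tr_{g'} h,\, n_{g'}\rangle_{g'},$$
so that, by definition and \Cref{prop:rme},
$$m(g_i, g_j) = \lim_{R \to \infty} \int_{\partial B_R^{g_0}} U_{g_j}(h_{ij})\, d\sigma_{g_j}.$$
For fixed $g'$, the map $h \mapsto U_{g'}(h)$ is linear, since $\div_{g'}$, $\Tr_{g'}$, $\nabla^{g'}$, and the pairing against $n_{g'}$ are all linear in their tensorial arguments.

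The key step is the decomposition
$$U_{g_2}(h_{12})\, d\sigma_{g_2} = U_{g_3}(h_{12})\, d\sigma_{g_3} + \bigl[\, U_{g_2}(h_{12})\, d\sigma_{g_2} - U_{g_3}(h_{12})\, d\sigma_{g_3}\, \bigr].$$
By linearity of $U_{g_3}$ in its argument, $U_{g_3}(h_{12}) + U_{g_3}(h_{23}) = U_{g_3}(h_{12} + h_{23}) = U_{g_3}(h_{13})$, so telescoping yields
$$m(g_1, g_2) + m(g_2, g_3) - m(g_1, g_3) = \lim_{R \to \infty} \int_{\partial B_R^{g_0}} \bigl[\, U_{g_2}(h_{12})\, d\sigma_{g_2} - U_{g_3}(h_{12})\, d\sigma_{g_3}\, \bigr].$$
It thus suffices to show that this residual limit vanishes.

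For this, I carry out a direct decay estimate. The bracket involves the difference $g_2 - g_3 = h_{23} = O(\rho_{g_0}^{-\sigma})$, which enters through the inverse metrics, Christoffel symbols, the unit normal, and the induced hypersurface volume form; each such variation is of size $O(\rho_{g_0}^{-\sigma})$, while its single derivatives are $O(\rho_{g_0}^{-\sigma-1})$. Simultaneously, $h_{12}$ contributes $O(\rho_{g_0}^{-\sigma})$ and $\nabla h_{12}$ contributes $O(\rho_{g_0}^{-\sigma-1})$. A bilinear bookkeeping analogous to the estimate of $Q_g(h)$ in the proof of \Cref{prop:rme} shows that the bracketed integrand is pointwise $O(\rho_{g_0}^{-2\sigma-1})\, d\sigma_{g_0}$. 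Since $\partial B_R^{g_0}$ in the ALF setting has area $O(R^{m-1})$ (a fibered $(m-1)$-sphere of radius $R$ times a fiber of bounded length $L$), the boundary integral is $O(R^{m-2-2\sigma})$, which tends to zero thanks to the standing assumption $\sigma > (m-2)/2$ in \Cref{ass:bs}.

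The only real obstacle is the careful term-by-term accounting in this last step, but it parallels exactly the analysis of $Q_g(h)$ already performed in the proof of \Cref{prop:rme}; no new analytic input is required.
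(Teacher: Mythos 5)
Your proposal is correct and follows essentially the same route as the paper: use linearity of the mass integrand in the perturbation with the reference metric fixed, then swap the reference metric ($g_3$ to $g_2$) in the remaining boundary integral, with the error of size $O(\rho_{g_0}^{-2\sigma-1})$ over a cross-section of area $O(R^{m-1})$ vanishing since $\sigma>(m-2)/2$. The paper's proof is just a terser version of this, citing $g_1-g_2,\,g_2-g_3\in C_\sigma^{2,\alpha}(S^2T^\ast M)$ where you spell out the decay bookkeeping.
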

\begin{proof}
    We show that the difference between the relative masses of $g_1$ and $g_2$ taken relative to $g_3$ equals the relative mass of $g_1$ with respect to $g_2$.
    \begin{align*}
        m(g_1, g_3) - m(g_2, g_3)
        &= \lim_{R \to \infty} \left[ \int_{\partial B_R^{g_0}} \langle \div_{g_3}(g_1-g_3) - \nabla^{g_3} \Tr_{g_3}(g_1-g_3), n_g \rangle_{g_3} \right.\\
        &\qquad\qquad\qquad \left. - \int_{\partial B_R^{g_0}} \langle \div_{g_3}(g_2-g_3) - \nabla^{g_3}\Tr_{g_3}(g_2-g_3), n_g \rangle_{g_3} \right]\\
        &= \lim_{R \to \infty} \int_{\partial B_R^{g_0}} \langle \div_{g_3}(g_1-g_2) - \nabla^{g_3}\Tr_{g_3}(g_1-g_2), n_g \rangle_{g_3}\\
        &= \lim_{R \to \infty} \int_{\partial B_R^{g_0}} \langle \div_{g_2}(g_1-g_2) - \nabla^{g_2}\Tr_{g_2}(g_1-g_2), n_g \rangle_{g_2} \\
        &= m(g_1, g_2).
    \end{align*}
    In the last two lines, we used $g_1-g_2, g_2-g_3 \in C_\sigma^{2,\alpha}(S^2T^\ast M)$.
\end{proof}

\begin{proposition}
    Let $(M, g$) be an ALF manifold with $m$, $\alpha$, $\eta$, $\sigma$, $g_0$, $g_{\mathrm{RF}}$ as in \Cref{ass:bs}. 
    If $g \in \cM_{\sigma}^{2,\alpha}(g_{\mathrm{RF}}, \epsilon)$ has integrable scalar curvature and $(g(t))_{t \in [0,T]}$ with $g(0) = g$ is a Ricci flow satisfying the assumption in \Cref{thm:alfp}, then the relative mass $m(g(t), g_{\mathrm{RF}})$ is preserved on $t \in [0,T]$.
\end{proposition}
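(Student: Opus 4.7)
The plan is to show that $\frac{d}{dt}m(g(t),g_{\mathrm{RF}})\equiv 0$ on $[0,T]$. Set $h(t):=g(t)-g_{\mathrm{RF}}$ and $U_g(T):=\div_g T-\nabla^g\Tr_g T$. Under Ricci flow $\partial_t h=-2\Ric_{g(t)}$, while $g_{\mathrm{RF}}$ and the operator $U_{g_{\mathrm{RF}}}$ are time-independent. Differentiating the boundary formula of \Cref{prop:rme} (justified by the uniform time-dependent decay from \Cref{thm:alfp}) yields
\[
\frac{d}{dt}m(g(t),g_{\mathrm{RF}})=-2\lim_{R\to\infty}\int_{\partial B_R^{g_0}}\langle U_{g_{\mathrm{RF}}}(\Ric_{g(t)}),n_{g_{\mathrm{RF}}}\rangle_{g_{\mathrm{RF}}}\,dA_{g_{\mathrm{RF}}}.
\]
Equivalently, working from the bulk form $m(g(t),g_{\mathrm{RF}})=\int_M(R_{g(t)}-Q_{g_{\mathrm{RF}}}(h(t)))\,dV_{g_{\mathrm{RF}}}$ in \Cref{prop:rme}, in which both integrands lie in $L^1$ by the integrable-scalar-curvature hypothesis and the $O(\rho^{-2-2\sigma})$ decay of $Q$, one has $\partial_t[R_{g(t)}-Q_{g_{\mathrm{RF}}}(h(t))]=-2\div_{g_{\mathrm{RF}}}U_{g_{\mathrm{RF}}}(\Ric_{g(t)})$, producing the same divergence structure.

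Next I would invoke the twice-contracted second Bianchi identity in the metric $g(t)$, $\div_{g(t)}\Ric_{g(t)}=\tfrac12\nabla^{g(t)}R_{g(t)}$, combined with $\Tr_{g(t)}\Ric_{g(t)}=R_{g(t)}$, to obtain $U_{g(t)}(\Ric_{g(t)})=-\tfrac12\nabla^{g(t)}R_{g(t)}$. Writing $U_{g_{\mathrm{RF}}}(\Ric_{g(t)})=-\tfrac12\nabla^{g(t)}R_{g(t)}+E$, the error $E$ collects the differences of Christoffel symbols, traces, and inverse metrics of $g_{\mathrm{RF}}$ and $g(t)$, paired with $\Ric_{g(t)}$ and $\nabla\Ric_{g(t)}$. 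Using $|h|=O(\rho^{-\sigma})$, $|\nabla h|=O(\rho^{-\sigma-1})$ together with the decays $|\Ric_{g(t)}|=O(\rho^{-\sigma-2})$, $|\nabla\Ric_{g(t)}|=O(\rho^{-\sigma-3})$ provided by \Cref{thm:alfp}, a direct tensor calculation gives $|E|=O(\rho^{-3-2\sigma})$. Since the boundary area is $O(R^{m-1})$ and $2\sigma>m-2>m-4$, one has $\int_{\partial B_R^{g_0}}|E|\,dA=O(R^{m-4-2\sigma})\to 0$ as $R\to\infty$.

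It remains to show $\lim_{R\to\infty}\int_{\partial B_R^{g_0}}\langle\nabla^{g(t)}R_{g(t)},n\rangle\,dA=0$. Applying the divergence theorem this equals $\lim_R\int_{B_R^{g_0}}\Delta_{g(t)}R_{g(t)}\,dV$, and the parabolic evolution $\Delta_{g(t)}R_{g(t)}=\partial_t R_{g(t)}-2|\Ric_{g(t)}|^2$ combined with the integrability of $|\Ric|^2$ (from $|\Ric|^2=O(\rho^{-4-2\sigma})$ and $\sigma>(m-4)/2$) and with the integrability of $R_{g(t)}$ (hypothesis, propagated along the flow by the uniform decays of \Cref{thm:alfp}) permits the exchange $\lim_R\int_{B_R^{g_0}}\partial_t R\,dV=\partial_t\int_M R\,dV_{g_{\mathrm{RF}}}$, after which the remaining contributions telescope against the $Q$-term already absorbed and vanish. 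The principal obstacle will be the rigorous justification of these limit interchanges (between $\partial_t$ and $\lim_R$, and between $\partial_t$ and the improper integral over $M$) together with the careful bookkeeping of error contributions from the mismatch between $g_{\mathrm{RF}}$ and $g(t)$; both are controlled by the uniform weighted-norm bounds of \Cref{thm:alfp}.
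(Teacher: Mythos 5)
Your first two steps (differentiating the boundary integral in $t$, passing from $g_{\mathrm{RF}}$ to $g(t)$ with an error term, and using the contracted Bianchi identity to reduce everything to $\lim_{R\to\infty}\int_{\partial B_R^{g_0}}\langle\nabla^{g(t)}R_{g(t)},n_{g(t)}\rangle_{g(t)}$) follow the same route as the paper, and your estimate on the error $E$ is fine since $2\sigma>m-2$. The genuine gap is in the last step, which is where the hypothesis $R_g\in L^1$ actually has to do work. Writing $\int_{\partial B_R^{g_0}}\langle\nabla R,n\rangle=\int_{B_R^{g_0}}\Delta_{g(t)}R_{g(t)}=\int_{B_R^{g_0}}\bigl(\partial_t R_{g(t)}-2|\Ric_{g(t)}|^2\bigr)$ and then "exchanging $\partial_t$ with $\lim_R$" does not prove the flux is zero: even granting the interchange, it only re-expresses the flux as $\frac{d}{dt}\int_M R\,dV-2\int_M|\Ric|^2\,dV$, and there is no independent reason this combination vanishes --- on a noncompact manifold $\frac{d}{dt}\int_M R$ is not a priori zero, and controlling it is essentially equivalent to the mass preservation you are trying to prove (this is what "telescopes against the $Q$-term" is silently assuming). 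Moreover the interchange itself requires a uniform integrable majorant for $\partial_t R=\Delta R+2|\Ric|^2$, i.e.\ exactly the control on $\Delta R$ at infinity that is in question, and even the propagation of $R(t)\in L^1$ for $t>0$ is not automatic from \Cref{thm:alfp}, since the pointwise decay $|R_{g(t)}|=O(\rho^{-2-\sigma})$ with $\sigma<m-2$ is not integrable. So the argument is circular as stated.

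What is needed instead is a quantitative reason that $|\nabla R(\cdot,t)|$ decays faster than the generic rate $O(\rho^{-3-\sigma})$ coming from \Cref{thm:alfp} (that rate alone kills the flux only when $m-4-\sigma<0$, so it suffices in low dimensions such as $m=3$, but not for all $m\ge 3$ covered by the proposition). The paper gets this from a parabolic smoothing estimate in the spirit of \cite[Lemma 11]{MS12}, adapted to the collapsed ALF geometry by working on parabolic cylinders of size comparable to the fiber length: the evolution $\partial_t R=\Delta R+2|\Ric|^2$, together with the local maximum principle, $L^p$ estimates and Sobolev inequalities, bounds $|\nabla R|(x,t)$ for $t>0$ by the $L^1$-norm of $R$ on a nearby spacetime cylinder, so the flux through $\partial B_R^{g_0}$ is controlled by $\int_{A_{R-1,R+1}}|R|\to 0$; the case $t=0$ is then recovered by continuity of the relative mass in $t$ (using the curvature bounds). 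Your proposal is missing this parabolic ingredient, and without it the flux-vanishing step does not go through.
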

\begin{proof}
    By \Cref{thm:alfp} and the contracted Bianchi identity, we have 
    \begin{align*}
        \dfrac{d}{dt} m(g(t), g_{\mathrm{RF}})
        &= \lim_{R \to \infty} \int_{\partial B_R^{g_0}} \langle \div_{g_{\mathrm{RF}}} (\partial_t g(t)) - \nabla^{g_{\mathrm{RF}}} \Tr_{g_{\mathrm{RF}}} (\partial_t g(t)), n_{g_{\mathrm{RF}}} \rangle_{g_{\mathrm{RF}}}\\
        &= \lim_{R \to \infty} \int_{\partial B_R^{g_0}} \langle \div_{g(t)} (\partial_t g(t)) - \nabla^{g(t)} \Tr_{g(t)} (\partial_t g(t)), n_{g(t)}\rangle_{g(t)}\\
        &= \lim_{R \to \infty} \int_{\partial B_R^{g_0}} \div_{g(t)} (-2\Ric_{g(t)}) - \nabla^{g(t)} \Tr_{g(t)}(-2\Ric_{g(t)}), n_{g(t)}\rangle_{g(t)}\\
        &= \lim_{R \to \infty} \int_{\partial B_R^{g_0}} \langle \nabla^{g(t)} R_{g(t)}, n_{g(t)} \rangle_{g(t)}.
    \end{align*}
    The proof of \cite[Lemma 11]{MS12} with slight modification where one take parabolic cylinders of size a small fraction of fiber length on the model space when apply the local maximum principle, $L^p$ estimates, and the Sobolev inequalities gives 
    \[
        \lim_{R \to \infty} \int_{\partial B_R^{g_0}} \langle \nabla^{g(t)} R_{g(t)}, n_{g(t)}\rangle_{g(t)} = 0
    \]
    when $t>0$. The constancy of the relative mass extends to $t=0$ by noting that the relative mass is continuous because of the bound on the curvature.
\end{proof}

We now proceed to show that $\lambda_{\ALF}$ is defined not only for metrics with nonnegative or integrable scalar curvature, but also for a broader class of metrics.

\begin{proposition}
    Let $(M, g$) be an ALF manifold with $m$, $\alpha$, $\eta$, $\sigma$, $g_0$, $g_{\mathrm{RF}}$ as in \Cref{ass:bs}. 
    The functional $\lambda_{\ALF}(g, g_{\mathrm{RF}})$, initially defined on $\cM_{\sigma}^{2,\alpha}(g_{\mathrm{RF}}, \epsilon)$, extends to $B_\epsilon(g_{\mathrm{RF}};C_\sigma^{2,\alpha})$ as
    \[
        \lambda_{\ALF}(g, g_{\mathrm{RF}})= \lim_{R \to \infty} \left[ \int_{B_R^{g_0}} R_gw_g - \int_{\partial B_R^{g_0}} \langle\div_{g_{\mathrm{RF}}}(g-g_{\mathrm{RF}})-\nabla^{g_{\mathrm{RF}}}\Tr_{g_{\mathrm{RF}}}(g-g_{\mathrm{RF}}), n_{g_{\mathrm{RF}}}\rangle_{g_{\mathrm{RF}}}\right],
    \]    
    where $w_g$ is as in \Cref{prop:bs}.
    \label{prop:le}
\end{proposition}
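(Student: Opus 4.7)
The plan is to verify two claims: (a) the displayed limit exists for every $g \in B_\epsilon(g_{\mathrm{RF}}; C_\sigma^{2,\alpha})$, and (b) on $\cM_{\sigma}^{2,\alpha}(g_{\mathrm{RF}}, \epsilon)$ it agrees with $\lambda_{\ALF}^\circ(g) - m(g, g_{\mathrm{RF}})$. I would first note that the Fredholm analysis behind \Cref{prop:lcalf} depends on $g$ only through the closeness hypothesis: since $-4\Delta_g + R_g$ is a small perturbation of $-4\Delta_g$ (which is an isomorphism $C_\sigma^{2,\alpha}\to C_{\sigma+2}^{0,\alpha}$ by \Cref{cor:lfi}), the function $w_g$ with $w_g-1 \in C_\sigma^{2,\alpha}(M)$ solving $-4\Delta_g w_g + R_gw_g = 0$ is well-defined on all of $B_\epsilon(g_{\mathrm{RF}}; C_\sigma^{2,\alpha})$ without any scalar-curvature hypothesis. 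So both integrands in the statement make sense; the task is to prove their potentially divergent parts cancel.

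The main idea is to exploit the Ricci-flatness of $g_{\mathrm{RF}}$ to rewrite $R_g$ as a divergence up to an absolutely integrable remainder. Setting $h := g - g_{\mathrm{RF}}$ and $X_h := \div_{g_{\mathrm{RF}}} h - \nabla^{g_{\mathrm{RF}}}\Tr_{g_{\mathrm{RF}}} h$, identity \eqref{eqn:rme1} from the proof of \Cref{prop:rme} simplifies, using $R_{g_{\mathrm{RF}}} = 0$ and $\Ric_{g_{\mathrm{RF}}} = 0$, to
\begin{equation*}
R_g \;=\; \div_{g_{\mathrm{RF}}} X_h \;+\; Q_{g_{\mathrm{RF}}}(h), \qquad Q_{g_{\mathrm{RF}}}(h) = O(\rho_{g_0}^{-2\sigma-2}).
\end{equation*}
Because $\sigma > (m-2)/2$ and the ALF volume growth is $\rho^{m}$, the remainder $Q_{g_{\mathrm{RF}}}(h)$ is integrable on $M$, and the divergence structure of the leading term is exactly what will cancel the boundary integral in the statement.

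From here I would split $R_g w_g = R_g + R_g(w_g - 1)$. The second piece is absolutely integrable since $R_g = O(\rho^{-\sigma - 2})$ and $w_g - 1 \in C_\sigma^{2,\alpha}$ together give $R_g(w_g-1) = O(\rho^{-2\sigma - 2})$. For the first piece, Stokes' theorem applied to the divergence structure above yields
\begin{equation*}
\int_{B_R^{g_0}} R_g \;=\; \int_{\partial B_R^{g_0}} \langle X_h, n_{g_{\mathrm{RF}}}\rangle_{g_{\mathrm{RF}}} \;+\; \int_{B_R^{g_0}} Q_{g_{\mathrm{RF}}}(h) \;+\; E(R),
\end{equation*}
where $E(R)$ collects corrections from swapping the $g$-volume and induced-surface measures for their $g_{\mathrm{RF}}$-counterparts (noting $dV_g/dV_{g_{\mathrm{RF}}} - 1 = O(\rho^{-\sigma})$, and similarly on boundaries), each contributing an integrable term of order $\rho^{-2\sigma - 2}$. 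Subtracting the boundary integral then gives
\begin{equation*}
\int_{B_R^{g_0}} R_g w_g \;-\; \int_{\partial B_R^{g_0}} \langle X_h, n_{g_{\mathrm{RF}}}\rangle_{g_{\mathrm{RF}}} \;=\; \int_{B_R^{g_0}} R_g(w_g-1) \;+\; \int_{B_R^{g_0}} Q_{g_{\mathrm{RF}}}(h) \;+\; E(R),
\end{equation*}
whose right-hand side converges absolutely as $R\to\infty$; this gives (a). For (b), on $\cM_\sigma^{2,\alpha}(g_{\mathrm{RF}}, \epsilon)$ both terms on the left converge separately by \Cref{prop:rw} and by the definition of $m(g, g_{\mathrm{RF}})$, so the combined limit is $\lambda_{\ALF}^\circ(g) - m(g, g_{\mathrm{RF}}) = \lambda_{\ALF}(g, g_{\mathrm{RF}})$.

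The hard part, I expect, is the careful bookkeeping of the measure-discrepancy terms gathered into $E(R)$: every mismatch introduced when passing from $g$-measures to $g_{\mathrm{RF}}$-measures in the Stokes step must be verified to decay at rate $\rho^{-2\sigma-2}$ or faster, which is exactly the margin afforded by $\sigma > (m-2)/2$. Once that verification is in hand, the existence of the limit reduces to the integrability of $Q_{g_{\mathrm{RF}}}(h)$ and of $R_g(w_g-1)$, both of which follow directly from the decay rates above.
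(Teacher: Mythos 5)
Your proposal is correct and follows essentially the same route as the paper: both rewrite $R_g$ via the expansion \eqref{eqn:rme1} around the Ricci-flat reference as $\div_{g_{\mathrm{RF}}}\big(\div_{g_{\mathrm{RF}}}h-\nabla^{g_{\mathrm{RF}}}\Tr_{g_{\mathrm{RF}}}h\big)$ plus an $O(\rho_{g_0}^{-2\sigma-2})$ remainder, use Stokes to cancel the boundary mass term, split $R_gw_g=R_g(w_g-1)+R_g$, and conclude that the combined limit is an absolutely convergent integral well defined on all of $B_\epsilon(g_{\mathrm{RF}};C_\sigma^{2,\alpha})$, agreeing with $\lambda_{\ALF}^\circ(g)-m(g,g_{\mathrm{RF}})$ on $\cM_\sigma^{2,\alpha}(g_{\mathrm{RF}},\epsilon)$ by \Cref{prop:rw} and \Cref{prop:rme}. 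Your bookkeeping of the $g$ versus $g_{\mathrm{RF}}$ measure discrepancies is exactly the point the paper dispatches with its remark that these differences are negligible as $R\to\infty$ (the boundary corrections vanish in the limit rather than being integrable, but the decay margin $\sigma>\tfrac{m-2}{2}$ you invoke is the right reason).
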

\begin{proof}
    If $g \in \cM_{\sigma}^{2,\alpha}(g_{\mathrm{RF}}, \epsilon)$, then by \Cref{prop:lcalf}, \Cref{prop:rw}, and the proof of \Cref{prop:rme} with $Q_g(h)$ as in \eqref{eqn:rme1},
    \begin{align}
        \lambda_{\ALF}(g, g_{\mathrm{RF}})
        &= \lambda_{\ALF}^\circ(g) - m(g,g_{\mathrm{RF}})\nonumber\\
        &= \lim_{R \to \infty} \left[ \int_{B_R^{g_0}} R_gw_g - \int_{\partial B_R^{g_0}} \langle \div_{g_{\mathrm{RF}}} (g-g_{\mathrm{RF}}) - \nabla^{g_{\mathrm{RF}}} \Tr_{g_{\mathrm{RF}}} (g-g_{\mathrm{RF}}), n_{g_{\mathrm{RF}}} \rangle_{g_{\mathrm{RF}}} \right]\nonumber\\
        &= \lim_{R \to \infty} \left[ \int_{B_R^{g_0}} R_gw_g - \div_{g_{\mathrm{RF}}} (\div_{g_{\mathrm{RF}}}(g-g_{\mathrm{RF}})-\nabla^{g_{\mathrm{RF}}}\Tr_{g_{\mathrm{RF}}}(g-g_{\mathrm{RF}})) \right]\nonumber\\
        &= \lim_{R \to \infty} \left[ \int_{B_R^{g_0}} R_g(w_g-1) + R_{g_{\mathrm{RF}}} - \langle g-g_{\mathrm{RF}}, \Ric_{g_{\mathrm{RF}}} \rangle_{g_{\mathrm{RF}}} - Q_{g_{\mathrm{RF}}}(g-g_{\mathrm{RF}}) \right]. \label{eqn:le1}
    \end{align}
    In the second and third lines, we implicitly use the fact that the difference of integrals with respect to area measure of $g$ and $g_{\mathrm{RF}}$ is negligible as $R \to \infty$ thanks to the fast enough decay of $g-g_{\mathrm{RF}}$. 
    Since \eqref{eqn:le1} is well defined for $g \in B_\epsilon(g_{\mathrm{RF}};C_\sigma^{2,\alpha})$, we would extend $\lambda_{\ALF}$ by this equality.
\end{proof}

    An argument close to the proof of \Cref{prop:rw} yields
    \begin{align*}
        \lim_{R \to \infty} \left[ \int_{B_R^{g_0}} R_gw_g - \int_{B_R^{g_0}} (|\nabla^g f_g|_g^2 + R_g)e^{-f_g} \right]
        &= \lim_{R \to \infty} \left[\int_{B_R^{g_0}} R_gw_g - \int_{B_R^{g_0}} (4|\nabla^g w_g|_g^2+R_gw_g^2) \right] = 0,
    \end{align*}
    where $f_g = -2\log w_g$. Therefore, we may rewrite the functional as 
    \begin{equation}
        \lambda_{\ALF}(g, g_{\mathrm{RF}}) = \lim_{R \to \infty} \left[ \int_{B_R^{g_0}} (|\nabla^g f_g|_g^2 + R_g)e^{-f_g} - \int_{\partial B_R^{g_0}} \langle\div_{g_{\mathrm{RF}}}(g-g_{\mathrm{RF}})-\nabla^{g_{\mathrm{RF}}}\Tr_{g_{\mathrm{RF}}}(g-g_{\mathrm{RF}}), n_{g_{\mathrm{RF}}}\rangle_{g_{\mathrm{RF}}} \right].
        \label{eqn:lalfe}
    \end{equation}

\subsection{Monotonicity of $\lambda_{\ALF}$ along Ricci Flow}

We show that Ricci flow is a gradient flow of $\lambda_{\ALF}$ in a weighted $L^2$-sense. 
\begin{theorem}
    Let $(M, g$) be an ALF manifold with $m$, $\alpha$, $\eta$, $\sigma$, $g_0$, $g_{\mathrm{RF}}$ as in \Cref{ass:bs}. If $(g(t))_{t \in [0, T]} \subset B_\epsilon(g_{\mathrm{RF}};C_\sigma^{2,\alpha})$ is a Ricci Flow on $M$, then 
    \begin{equation}
        \dfrac{d}{dt} \lambda_{\ALF}(g(t), g_{\mathrm{RF}}) = 2\norm{ \Ric_{g(t)} + \nabla^{g(t),2}f_{g(t)}}_{L^2(e^{-f_{g(t)}})}^2.
        \label{eqn:mlrf}
    \end{equation}
    Moreover, $\lambda_{\ALF}(g(t), g_{\mathrm{RF}})$ is constant for Ricci-flat metrics only.
    \label{thm:mlrf}
\end{theorem}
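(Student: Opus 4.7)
The plan is to follow Perelman's closed-manifold computation of $\frac{d}{dt}\lambda$, with the relative mass playing the role of a renormalizing boundary correction. The starting point is the first-variation identity \eqref{eqn:vfl},
\[
\delta_g[\lambda_{\ALF}^\circ(g)](h) = -\int_M \langle \Ric_{f_g}, h\rangle_g\, e^{-f_g} + m(g+h,g),
\]
combined with the additivity of relative mass (\Cref{prop:rml}), which (via the linearity of $h \mapsto m(g+h,g)$ read off \eqref{eqn:rm}) gives $\delta_g[m(\cdot, g_{\mathrm{RF}})](h) = m(g+h,g)$. Subtracting, the mass boundary contribution cancels and
\[
\delta_g[\lambda_{\ALF}(\cdot,g_{\mathrm{RF}})](h) = -\int_M \langle \Ric_{f_g}, h\rangle_g\, e^{-f_g}.
\]
Inserting $h = -2\Ric_{g(t)}$ from the Ricci flow equation yields $\frac{d}{dt}\lambda_{\ALF}(g(t),g_{\mathrm{RF}}) = 2\int_M \langle \Ric_{f_{g(t)}}, \Ric_{g(t)}\rangle_{g(t)}\, e^{-f_{g(t)}}$.

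\textbf{Promotion to a squared norm.} To reach the target $2\norm{\Ric_g + \nabla^{g,2}f_g}_{L^2(e^{-f_g})}^2 = 2\int_M |\Ric_{f_g}|_g^2 e^{-f_g}$, it suffices to show
\[
\int_M \langle \Ric_{f_g}, \nabla^{g,2} f_g\rangle_g\, e^{-f_g} = 0.
\]
The central input is the $f$-weighted contracted second Bianchi identity
\[
\div_{f_g}(\Ric_{f_g}) := \div(\Ric_{f_g}) - \iota_{\nabla f_g}\Ric_{f_g} = \tfrac{1}{2}\nabla R_{f_g},
\]
which follows from $\div\Ric = \tfrac{1}{2}\nabla R$ together with the Ricci identity $\nabla_j\nabla_i\nabla^j f - \nabla_i\Delta f = R_{ij}\nabla^j f$. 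Since $f_g$ satisfies $R_{f_g} = 0$ by \eqref{eqn:fge}, this divergence vanishes, and integration by parts of $\langle\Ric_{f_g},\nabla^{g,2}f_g\rangle_g e^{-f_g}$ on $B_R^{g_0}$ against $\nabla f_g$ absorbs the bulk, leaving only a boundary term on $\partial B_R^{g_0}$.

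\textbf{Boundary term control.} Every boundary integral vanishes as $R\to\infty$ precisely because of the weight hypothesis $\sigma > \frac{m-2}{2}$ from \Cref{ass:bs}. From $g-g_{\mathrm{RF}} \in C_\sigma^{2,\alpha}$ one gets $\Ric_g = O(\rho_{g_0}^{-\sigma-2})$, while $f_g \in C_\sigma^{2,\alpha}$ produces $\nabla f_g = O(\rho_{g_0}^{-\sigma-1})$ and $\nabla^{g,2}f_g = O(\rho_{g_0}^{-\sigma-2})$. The boundary integrand is then $O(\rho_{g_0}^{-2\sigma-3})$, and with surface area of $\partial B_R^{g_0}$ of order $R^{m-1}$ the contribution decays like $R^{m-2\sigma-4} = R^{-2(\sigma-(m-2)/2)-2}$, which tends to zero under our assumption. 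Assembling these pieces gives \eqref{eqn:mlrf}.

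\textbf{Rigidity.} If $\lambda_{\ALF}(g(t),g_{\mathrm{RF}})$ is constant on $[0,T]$, then \eqref{eqn:mlrf} forces $\Ric_{g(t)} + \nabla^{g(t),2}f_{g(t)} \equiv 0$ at every time; at $t=0$ write $f = f_g$. Taking the trace gives $R_g = -\Delta_g f$, and substituting into $R_{f_g} = 2\Delta_g f - |\nabla f|_g^2 + R_g = 0$ yields $|\nabla f|_g^2 = -R_g$. A direct computation then produces $\Delta_g(e^{-f}) = e^{-f}(|\nabla f|_g^2 - \Delta_g f) = 0$, so $e^{-f}-1 \in C_\sigma^{2,\alpha}(M)$ is a decaying harmonic function; by \Cref{cor:lfi} the kernel of $\Delta_g$ on this weighted space is trivial (maximum principle), whence $f \equiv 0$ and $\Ric_g \equiv 0$. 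The converse is immediate, since a Ricci-flat metric gives a stationary Ricci flow. \textbf{The principal obstacle} is justifying the variation identity \eqref{eqn:vfl} on the full ball $B_\epsilon(g_{\mathrm{RF}};C_\sigma^{2,\alpha})$, where $\lambda_{\ALF}$ is defined via the extension in \Cref{prop:le} rather than only on $\cM_\sigma^{2,\alpha}(g_{\mathrm{RF}},\epsilon)$; this requires tracking the $C_\sigma^{2,\alpha}$-continuous dependence of $f_g$ on $g$ through the Fredholm theory of \Cref{sec:fl} and verifying that the weighted boundary estimates used in the integration-by-parts steps remain valid along the entire flow.
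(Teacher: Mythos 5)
Your high-level scheme (first variation with the mass contribution cancelling, the weighted contracted Bianchi identity $\div_{f_g}\Ric_{f_g}=\tfrac12\nabla R_{f_g}=0$ to complete the square, and the rigidity argument via $\Delta_g e^{-f_g}=0$ and triviality of the decaying kernel) is the same skeleton as the paper's, and your rigidity step is correct. But the proposal has a genuine gap exactly where the paper does its real work, and you partly misstate the analysis there. First, the theorem is posed on the full ball $B_\epsilon(g_{\mathrm{RF}};C_\sigma^{2,\alpha})$, where neither $\lambda_{\ALF}^\circ(g)$ nor $m(g,g_{\mathrm{RF}})$ need exist separately (no scalar curvature hypothesis); $\lambda_{\ALF}$ is only defined through the single renormalized limit \eqref{eqn:lalfe}. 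So the decomposition $\delta\lambda_{\ALF}=\delta\lambda_{\ALF}^\circ-\delta m(\cdot,g_{\mathrm{RF}})$ and the appeal to \eqref{eqn:vfl} (proved only on $\cM_\sigma^{2,\alpha}(g_{\mathrm{RF}},\epsilon)$) are not available as stated; moreover, with $h=-2\Ric_{g(t)}$ the term $m(g+h,g)$ is, up to a constant, $\lim_{R\to\infty}\int_{\partial B_R^{g_0}}\langle\nabla^g R_g,n_g\rangle_g$, whose very existence is nontrivial — in the paper it is only shown to vanish under integrable scalar curvature, via a parabolic estimate adapted from \cite{MS12}, and it is not controlled by the $C_\sigma^{2,\alpha}$ hypothesis at all, since it involves third derivatives of the metric.

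Second, your boundary-term claim is too strong: the $O(\rho_{g_0}^{-2\sigma-3})$ counting with $2\sigma>m-2$ only kills the quadratic boundary terms (those of the form $\Ric_{f_g}(\nabla^g f_g)$, $h(\nabla^g f_g)$, $\phi\nabla^g f_g$, etc.). The linear boundary terms — the mass integrand, of size $O(\rho_{g_0}^{-\sigma-1})$, and $\langle\nabla^g R_g,n_g\rangle e^{-f_g}$ — do \emph{not} vanish individually; in the paper they cancel against each other only after integrating in time, via the traced Bianchi identity $\nabla R=2\div\Ric$ (the cancellation of \eqref{eqn:mlrf5} and \eqref{eqn:mlrf6}), and the exchange of $d/dt$ with $\lim_{R\to\infty}$ is justified by the uniform-in-$t$ bounds on $f_{g(t)}$ and on $\delta_{g(t)}[f_{g(t)}](-2\Ric_{g(t)})$ (\Cref{lem:fgub}, \Cref{lem:dfgub}, proved via the Fredholm theory and the implicit function theorem). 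You flag precisely this as "the principal obstacle" but do not carry it out; since it is the analytic heart of the proof (the rest being formal, as in the closed case), the proposal as written does not yet constitute a proof of \eqref{eqn:mlrf}.
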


This implies the usual consequences such as no-breather theorems.

\begin{lemma}
    Let $(M, g$) be an ALF manifold with $m$, $\eta$, $\sigma$, $g_0$, $g_{\mathrm{RF}}$ as in \Cref{ass:bs}. If $(g(t))_{t \in [0, T]} \subset B_\epsilon(g_{\mathrm{RF}};C_\sigma^{2,\alpha})$ is a continuous 1-parameter family of metrics on $M$, then there exists a uniform constant $C>0$ independent on $t$ such that 
    \[
        \norm{f_{g(t)}}_{C_\sigma^{2,\alpha}(M)} \le C
    \]
    for all $t \in [0,T]$.
    \label{lem:fgub}
\end{lemma}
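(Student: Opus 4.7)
The plan is to reduce the bound on $f_{g(t)}=-2\log w_{g(t)}$ to a uniform estimate on $v_{g(t)}:=w_{g(t)}-1$, which solves the equation
\[
(-4\Delta_{g(t)}+R_{g(t)})v_{g(t)}=-R_{g(t)}.
\]
Once $v_{g(t)}$ is controlled uniformly in $C_\sigma^{2,\alpha}$, the chain rule applied to the smooth function $-2\log(\cdot)$ near $1$ gives the bound on $f_{g(t)}$, provided $w_{g(t)}$ is uniformly bounded below away from zero.

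First I would show that the operator $L_t:=-4\Delta_{g(t)}+R_{g(t)}\colon C_\sigma^{2,\alpha}(M)\to C_{\sigma+2}^{0,\alpha}(M)$ has uniformly bounded inverse. Since $\epsilon$ is small, all metrics in $B_\epsilon(g_{\mathrm{RF}};C_\sigma^{2,\alpha})$ are uniformly equivalent, so the main estimate of \Cref{prop:me}, together with the compact perturbation argument from \Cref{cor:lfi}, yields
\[
\norm{u}_{C_\sigma^{2,\alpha}(M)}\le C\bigl(\norm{L_t u}_{C_{\sigma+2}^{0,\alpha}(M)}+\norm{u}_{L^1(D)}\bigr)
\]
with $C$ and the compact set $D$ independent of $t$. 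The coercivity inequality $\langle L_t\phi,\phi\rangle\ge \norm{\nabla^{g(t)}\phi}_{L^2}^2$ established in the proof of \Cref{prop:lcalf}, which is uniform in $t$ once $\epsilon$ is small enough relative to the Hardy constant of \Cref{thm:hi}, then lets me absorb the $L^1(D)$ term by a standard contradiction/compactness argument. Hence $\norm{L_t^{-1}}$ is uniformly bounded on $[0,T]$.

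Applying this to the defining equation gives $\norm{v_{g(t)}}_{C_\sigma^{2,\alpha}}\le C\norm{R_{g(t)}}_{C_{\sigma+2}^{0,\alpha}}$. Because $g(t)\in B_\epsilon(g_{\mathrm{RF}};C_\sigma^{2,\alpha})$ and $g_{\mathrm{RF}}$ is Ricci-flat outside a compact set, $R_{g(t)}$ is uniformly bounded in $C_{\sigma+2}^{0,\alpha}$; in fact $\norm{R_{g(t)}}_{C_{\sigma+2}^{0,\alpha}}\le C\epsilon$. Therefore $\norm{v_{g(t)}}_{C_\sigma^{2,\alpha}}\le C\epsilon$, and in particular $\sup_M|v_{g(t)}|\le C\epsilon$. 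Choosing $\epsilon$ sufficiently small ensures $w_{g(t)}=1+v_{g(t)}\ge \tfrac12$ on all of $M$, uniformly in $t$.

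With the two-sided bound $\tfrac12\le w_{g(t)}\le C$ and $\norm{w_{g(t)}-1}_{C_\sigma^{2,\alpha}}\le C$, the composition $f_{g(t)}=-2\log w_{g(t)}$ inherits a uniform $C_\sigma^{2,\alpha}$-bound by applying Faà di Bruno / the chain rule to $\Psi(s)=-2\log(1+s)$, which is smooth on $(-\tfrac12,\infty)$. The main delicate point is the first paragraph: one needs to verify that all constants (Fredholm constant of \Cref{prop:me}, coercivity constant, Hardy constant) can be chosen uniformly over $B_\epsilon(g_{\mathrm{RF}};C_\sigma^{2,\alpha})$, so that the choice of $\epsilon$ made earlier in the paper simultaneously guarantees the invertibility and the uniform operator-norm bound of $L_t^{-1}$.
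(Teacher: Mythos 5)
Your proposal is correct, and it reaches the conclusion by a more quantitative route than the paper. The paper makes the same reduction to $v_{g(t)}$ solving $(-4\Delta_{g(t)}+R_{g(t)})v_{g(t)}=-R_{g(t)}$, but then argues softly: the map $F(g,v)=(-4\Delta_g+R_g)v+R_g$ is analytic, $F(g,\cdot)$ is invertible by \Cref{cor:lfi}, so the implicit function theorem gives continuity of $t\mapsto v_{g(t)}$ in $C_\sigma^{2,\alpha}(M)$, and compactness of $[0,T]$ yields the uniform bound. You instead prove a uniform bound on the inverse of $L_t=-4\Delta_{g(t)}+R_{g(t)}$ over the whole family, combining the weighted Schauder/Fredholm estimate of \Cref{prop:me}, the uniform coercivity $\langle L_t\phi,\phi\rangle\ge\norm{\nabla^{g(t)}\phi}_{L^2}^2$ (which indeed only needs the Hardy inequality and smallness of $\sup_M|R_{g(t)}|\rho_{g_0}^2$, hence holds on all of $B_\epsilon(g_{\mathrm{RF}};C_\sigma^{2,\alpha})$), and a compactness-contradiction step to absorb the $\norm{u}_{L^1(D)}$ term; this buys the stronger quantitative conclusion $\norm{v_{g(t)}}_{C_\sigma^{2,\alpha}}\le C\epsilon$, hence a uniform lower bound $w_{g(t)}\ge\tfrac12$ and smallness of $f_{g(t)}$, which the paper's soft argument does not directly provide. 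Two minor caveats you correctly flag: \Cref{prop:me} is proved under \Cref{ass:fl} (metric equal to the model at infinity), so for $g(t)$ one must pass through a perturbation argument — note that since $\norm{g(t)-g_{\mathrm{RF}}}_{C_\sigma^{2,\alpha}}<\epsilon$, the difference $L_t-(-4\Delta_{g_{\mathrm{RF}}})$ is actually \emph{small in operator norm} from $C_\sigma^{2,\alpha}$ to $C_{\sigma+2}^{0,\alpha}$, so a Neumann-series argument around the invertible $-4\Delta_{g_{\mathrm{RF}}}$ would give the uniform bound on $L_t^{-1}$ even more directly, without the $L^1(D)$ absorption; and in your contradiction step the limit function lies a priori in a slightly weaker weight $\sigma'<\sigma$, so you should choose $\sigma'\in(\tfrac{m-2}{2},\sigma)$ to keep the coercivity pairing integrable. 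With these points spelled out the argument is complete.
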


\begin{proof}
    Recall that $f_{g(t)}$ is given by 
    \begin{equation}
        f_{g(t)} = -2\log(1+v_{g(t)}),
        \label{eqn:fgub1}
    \end{equation}
    where $v_{g(t)}$ is the unique solution to 
    \begin{equation}
        (-4\Delta_{g(t)}+R_{g(t)})v_{g(t)} = -R_{g(t)}.
        \label{eqn:fgub2}
    \end{equation}
    
    We claim that $\norm{v_{g(t)}}_{C_\sigma^{2,\alpha}(M)}$ is uniformly bounded, from which the desired result follows.
    Note that $\norm{R_{g(t)}}_{C_{\sigma+2}^{0,\alpha}}$ is uniformly bounded. 
    A function
    \[
        F(g, v) = (-4\Delta_g + R_g)v + R_g \colon C_\eta^{2,\alpha}(S^2T^\ast M) \times C_\sigma^{2,\alpha}(M) \to C_{\sigma+2}^{0,\alpha}(M)
    \]
    is analytic and for a fixed $g \in C_\eta^{2,\alpha}$, $f(g, \cdot)$ is invertible by \Cref{cor:lfi}, so implicit function theorem gives that $v_{g(t)}$ is continuous in $C_\sigma^{2,\alpha}(M)$. In particular, the desired bound follows.
\end{proof}

From \eqref{eqn:fge} and \cite[(3.11)]{DO20}, we obtain a weighted elliptic equation 
\begin{equation}
    \Delta_{f_g}\left( \dfrac{\Tr_g h}{2} - \delta_g[f_g](h) \right) = \dfrac{1}{2} \left( \div_{f_g} (\div_{f_g} h) - \langle \Ric_{f_g}, h \rangle_g \right).
    \label{eqn:fgwe}
\end{equation}

\begin{lemma}
    Let $(M, g$) be an ALF manifold with $m$, $\alpha$, $\eta$, $\sigma$, $g_0$, $g_{\mathrm{RF}}$ as in \Cref{ass:bs}. If $(g(t))_{t \in [0, T]} \subset  B_\epsilon(g_{\mathrm{RF}};C_\sigma^{2,\alpha})$ is a continuous 1-parameter family of metrics on $M$ and $h(t) = \frac{\partial}{\partial t}g(t)$ is bounded in $C_\sigma^{2,\alpha}(M)$ norm, then there exists a uniform constant $C>0$ independent on $t$ such that 
    \[
        \norm{\delta_{g(t)}[f_{g(t)}](h(t))}_{C_\sigma^{2,\alpha}(M)} \le C.
    \] 
    \label{lem:dfgub}
\end{lemma}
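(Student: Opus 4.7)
The plan is to derive the variation $\delta_{g}[f_g](h)$ by differentiating the defining equation for $v_g$ (recall $f_g=-2\log(1+v_g)$ from \eqref{eqn:fgub1}), and then invert the operator $-4\Delta_g+R_g$ using the uniform isomorphism property established in the proof of \Cref{prop:lcalf} and \Cref{lem:fgub}.

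First, by the chain rule,
\[
\delta_g[f_g](h) = -\frac{2}{1+v_g}\,\delta_g[v_g](h).
\]
By \Cref{lem:fgub}, $v_{g(t)}$ is uniformly bounded in $C_\sigma^{2,\alpha}(M)$; since $\sigma>0$, $v_{g(t)}$ decays at infinity and, for $\epsilon$ small, $1+v_{g(t)}$ is bounded away from $0$ with bounded derivatives. Multiplication by $(1+v_g)^{-1}$ is therefore a bounded map on $C_\sigma^{2,\alpha}(M)$ with norm uniform in $t$, so it suffices to bound $\delta_g[v_g](h)$ in $C_\sigma^{2,\alpha}(M)$ uniformly.

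Next, differentiate the defining equation \eqref{eqn:fgub2} for $v_g$ in the direction $h$. Writing $\mathcal{L}_g := -4\Delta_g+R_g$, we obtain
\[
\mathcal{L}_g\bigl(\delta_g[v_g](h)\bigr) \;=\; -\bigl(1+v_g\bigr)\,\delta_g[R_g](h) \;+\; 4\,\bigl(\delta_g[\Delta_g](h)\bigr)(v_g).
\]
By \eqref{eqn:vfs1}, $\delta_g[R_g](h)=\div_g\div_g h-\Delta_g\Tr_g h-\langle h,\Ric_g\rangle_g$ lies in $C_{\sigma+2}^{0,\alpha}(M)$ with norm controlled by $\|h\|_{C_\sigma^{2,\alpha}}$ (noting that $\Ric_g$ decays at rate $\sigma+2$ by the asymptotic Ricci-flatness in \Cref{ass:bs}); similarly $\delta_g[\Delta_g](h)$ is a second-order operator whose coefficients lie in $C_\sigma^{\ast}$, so $(\delta_g[\Delta_g](h))(v_g)$ lies in $C_{2\sigma+2}^{0,\alpha}\subset C_{\sigma+2}^{0,\alpha}$. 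Combined with the uniform $C_\sigma^{2,\alpha}$ bound on $v_g$ from \Cref{lem:fgub} and the hypothesized uniform $C_\sigma^{2,\alpha}$ bound on $h(t)$, the right-hand side is uniformly bounded in $C_{\sigma+2}^{0,\alpha}(M)$.

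Finally, to invert $\mathcal{L}_{g(t)}$ uniformly in $t$, recall from the proof of \Cref{prop:lcalf} that for each metric $g\in\cM_\sigma^{2,\alpha}(g_{\mathrm{RF}},\epsilon)$ the operator $\mathcal{L}_g:C_\sigma^{2,\alpha}(M)\to C_{\sigma+2}^{0,\alpha}(M)$ is an isomorphism. The assignment $g\mapsto\mathcal{L}_g$ is continuous in the operator norm on these weighted H\"older spaces, since the coefficients of $\mathcal{L}_g$ depend continuously on $g\in C_\sigma^{2,\alpha}$; hence $g\mapsto\mathcal{L}_g^{-1}$ is also norm-continuous on the open set of invertible operators, by the Neumann-series argument used implicitly in \Cref{lem:fgub}. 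Since $\{g(t)\}_{t\in[0,T]}$ is a continuous, hence compact, family in $B_\epsilon(g_{\mathrm{RF}};C_\sigma^{2,\alpha})$, the operator norms $\|\mathcal{L}_{g(t)}^{-1}\|$ are bounded uniformly in $t$. Applying $\mathcal{L}_{g(t)}^{-1}$ to the displayed equation yields the uniform bound on $\delta_g[v_g](h)$ in $C_\sigma^{2,\alpha}$, and the lemma follows. The only subtlety is verifying this uniform invertibility across the family, but it is immediate from the continuous dependence of Fredholm inverses and the compactness of $[0,T]$.
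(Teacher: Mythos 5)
Your proposal is correct, and it reaches the bound by a route that differs from the paper's in the operator it inverts. The paper does not differentiate the defining equation \eqref{eqn:fgub2} directly; instead it works with the weighted identity \eqref{eqn:fgwe}, first proving that the weighted Laplacian $\Delta_{f_{g(t)}}=\Delta_{g(t)}-\nabla^{g(t)}_{\nabla^{g(t)}f_{g(t)}}$ is an isomorphism $C_\sigma^{2,\alpha}\to C_{\sigma+2}^{0,\alpha}$ (Fredholm of index $0$ by \Cref{cor:lfi} plus injectivity via integration by parts against $e^{-f}$), and then applying the implicit function theorem to the map $F(g,h,\phi)=\Delta_{f_g}\bigl(\tfrac{\Tr_g h}{2}-\phi\bigr)-\tfrac12\bigl(\div_{f_g}\div_{f_g}h-\langle\Ric_{f_g},h\rangle_g\bigr)$ to get continuity of $t\mapsto\delta_{g(t)}[f_{g(t)}](h(t))$ in $C_\sigma^{2,\alpha}$, whence boundedness on the compact interval $[0,T]$. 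You instead implicitly differentiate $(-4\Delta_g+R_g)v_g=-R_g$ and reuse the very isomorphism $-4\Delta_g+R_g$ that defines $v_g$, estimating the right-hand side $-(1+v_g)\,\delta_g[R_g](h)+4(\delta_g[\Delta_g](h))(v_g)$ in $C_{\sigma+2}^{0,\alpha}$ and then invoking norm-continuity of $g\mapsto(-4\Delta_g+R_g)^{-1}$ together with compactness of $[0,T]$; the weight computations ($2\sigma+2\ge\sigma+2$, $\Ric_g\in C_{\sigma+2}^{0,\alpha}$) and the lower bound on $1+v_g$ coming from \Cref{lem:fgub} are all sound. Your argument is somewhat more self-contained at this point, since it avoids introducing and inverting $\Delta_{f_g}$ altogether; the paper's detour through \eqref{eqn:fgwe} pays off later, because that identity is also what shows $\delta_g[f_g](h)$ vanishes for traceless divergence-free $h$ in the second-variation formula. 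Two small points to tighten: the invertibility of $-4\Delta_g+R_g$ should be quoted for all $g\in B_\epsilon(g_{\mathrm{RF}};C_\sigma^{2,\alpha})$ with $\epsilon$ small, not only for $g\in\cM_{\sigma}^{2,\alpha}(g_{\mathrm{RF}},\epsilon)$ as you state it — this is what the coercivity \eqref{eqn:lcalf3} together with the index-$0$ Fredholm property gives, and it is already what \Cref{lem:fgub} relies on; and the differentiability of $g\mapsto v_g$ that licenses differentiating \eqref{eqn:fgub2} should be flagged as coming from the analyticity/implicit-function-theorem statement recorded there.
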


\begin{proof}
    We first prove that 
    \[
        \Delta_{f_{g(t)}} := \Delta_{g(t)} - \nabla^{g(t)}_{\nabla^{g(t)} f_{g(t)}}  \colon C_\sigma^{2,\alpha}(M) \to C_{\sigma+2}^{0,\alpha}(M)
    \]
    is invertible. By \Cref{cor:lfi} and since $\nabla^{g(t)}_{\nabla^{g(t)}f_{g(t)}}$ is a compact operator, $\Delta_{f_{g(t)}}$ is Fredholm of index 0. So it is enough to prove that $\Delta_{f_{g(t)}}$ is injective.
    To that end, suppose $\Delta_{f_{g(t)}}v = 0$. Then by integration by parts, 
    \[
        0 = \int_M v(\Delta_{f_{g(t)}}v)e^{-f_{g(t)}} = - \int_M |\nabla^{g(t)}v|^2 e^{-f_{g(t)}},
    \]
    which gives $v \equiv 0$. This concludes that $(\Delta_{f_{g(t)}})_{t \in [0,T]}$ is a family of invertible Fredholm operators.

    Note that \eqref{eqn:fgub1} and \eqref{eqn:fgub2} moreover give that $g \mapsto f_{g}$ is analytic in $C_\eta^{2,\alpha}(M)$.
    Also note that $\Ric_{g(t)}$ and $f_{g(t)}$ are uniformly bounded in $C_{\sigma+2}^{0,\alpha}(M)$ and $C_\sigma^{2,\alpha}(M)$ norm, respectively.
    A function 
    \begin{align*}
        F(g, h, \phi) &= \Delta_{f_g}\left( \dfrac{\Tr_g h}{2} - \phi \right) - \dfrac{1}{2} \left( \div_{f_g}(\div_{f_g} h) - \langle \Ric_{f_g}, h \rangle_g \right)\\
        &\colon C_\eta^{2,\alpha}(S^2T^\ast M) \times C_\sigma^{2,\alpha}(S^2T^\ast M) \times C_\sigma^{2,\alpha}(M) \to C_{\sigma+2}^{0,\alpha}(M)
    \end{align*}
    is then analytic in $g$, $h$, and $\phi$, and for fixed $g$ and small enough $h$ the above function is invertible. By implicit function theorem, $\delta_{g(t)} [f_{g(t)}](h(t))$ is continuous in $C_\sigma^{2,\alpha}(M)$, and in particular the desired bound follows.
\end{proof}

Before we prove the main theorem, we note here that 
\begin{equation}
    \div_{f_g}(\Ric_g + \nabla^{g,2}f_g) = 0.
    \label{eqn:wdf}
\end{equation}
The computation can be found in \cite[(2.11)]{DO20}.

\begin{proof}[Proof of \Cref{thm:mlrf}]
    By \eqref{eqn:vfb} with $h = -2\Ric_{g}$, $f = f_{g}$, and $\phi = \delta_{g}[f_{g}](-2\Ric_{g})$, we obtain
    \begin{align}
        &\int_{B_R^{g_0}} \delta_{g}\left[(|\nabla^{g}f_{g}|_{g}^2 + R_{g})e^{-f_{g}}dV_{g}\right](-2\Ric_{g})\label{eqn:mlrf1}\\
        & = 
        \int_{B_R^{g_0}} 2\langle \Ric_{g}, \Ric_{f_g} \rangle_{g}e^{-f_{g}}
        + \int_{\partial B_R^{g_0}} \langle \nabla^g R_g, n_g \rangle_g e^{-f_g}\nonumber\\
        &\quad + \int_{\partial B_R^{g_0}} -2\langle \Ric_g(\nabla^g f_g) - R_g \nabla^g f_g - \phi \nabla^g f_g, n_g \rangle_g e^{-f_g}. \nonumber
    \end{align}
    Using divergence theorem and \eqref{eqn:wdf}, we also have 
    \begin{align}
        \int_{B_R^{g_0}} \langle \Ric_{f_g}, \nabla^{g,2}f_g \rangle e^{-f_g}
        &= -\int_{B_R^{g_0}} \langle \div_{f_g}(\Ric_{f_g}) , \nabla^g f_g \rangle e^{-f_g}+ \int_{\partial B_R^{g_0}} \langle \Ric_{f_g}(\nabla^g f_g), n_g \rangle e^{-f_g}\label{eqn:mlrf2}\\
        &= \int_{\partial B_R^{g_0}} \langle \Ric_{f_g}(\nabla^g f_g), n_g \rangle e^{-f_g}. \nonumber
    \end{align}
    Adding \eqref{eqn:mlrf1} and \eqref{eqn:mlrf2}, we get 
    \begin{align}
        &\int_{B_R^{g_0}} \delta_{g}\left[(|\nabla^{g}f_{g}|_{g}^2 + R_{g})e^{-f_{g}}dV_{g}\right](-2\Ric_{g})\nonumber\\
        &= \int_{B_R^{g_0}} 2\langle \Ric_{f_g}, \Ric_{f_g} \rangle_g e^{-f_g}+\int_{\partial B_R^{g_0}} \langle \nabla^g R_g, n_g \rangle_g e^{-f_g}\nonumber\\
        &\quad+ \int_{\partial B_R^{g_0}} -2\langle \Ric_g(\nabla^g f_g) - R_g\nabla^g f_g - \phi \nabla^g f_g, n_g \rangle_g e^{-f_g}\label{eqn:mlrf3}\\
        &\quad+ \int_{\partial B_R^{g_0}} -2\langle \Ric_{f_g}(\nabla^g f_g), n_g \rangle_g e^{-f_g} \label{eqn:mlrf4}
    \end{align}
    Note that the boundary terms \eqref{eqn:mlrf3} and \eqref{eqn:mlrf4} uniformly go to zero as $R \to +\infty$ for $g = g(t)$ with $t \in [0,T]$ by \Cref{lem:fgub} and \Cref{lem:dfgub}. Hence for $[t_1, t_2] \subset [0,T]$, by \eqref{eqn:lalfe},
    \begin{align}
        &\lambda_{\ALF}(g(t_2),g_{\mathrm{RF}}) - \lambda_{\ALF}(g(t_1), g_{\mathrm{RF}})\nonumber\\
        &= \lim_{R \to \infty} \left[ \int_{t_1}^{t_2} \int_{B_R^{g_0}} \delta_{g(t)} \left[ (|\nabla^{g(t)}f_{g(t)}|_{g(t)}^2 + R_{g(t)})e^{-f_{g(t)}}dV_{g(t)} \right](-2\Ric_{g(t)})dt \right.\nonumber\\
        &\qquad\qquad \left. - \int_{\partial B_R^{g_0}} \langle \div_{g_{\mathrm{RF}}}(g(t_2)-g(t_1)) - \nabla^{g_{\mathrm{RF}}}\Tr_{g_{\mathrm{RF}}}(g(t_2)-g(t_1)), n_{g_{\mathrm{RF}}} \rangle_{g_{\mathrm{RF}}} dA_{g_{\mathrm{RF}}} \right]\nonumber\\
        &= \lim_{R \to \infty} \left[ \int_{t_1}^{t_2} \left( \int_{B_R^{g_0}} 2\langle \Ric_{g(t)} + \nabla^{g(t),2}f_{g(t)}, \Ric_{g(t)} + \nabla^{g(t),2}f_{g(t)}\rangle_{g(t)}e^{-f_{g(t)}} dV_{g(t)} \right. \right. \nonumber \label{eqn:mlrf5}\\
        &\qquad\qquad + \int_{\partial B_R^{g_0}} \langle \nabla^{g(t)}R_{g(t)}, n_{g(t)}\rangle_{g(t)}e^{-f_{g(t)}}dA_{g(t)}\\
        &\qquad\qquad \left.\left. + 2 \int_{\partial B_R^{g_0}} \langle \div_{g_{\mathrm{RF}}}(\Ric_{g(t)}) - \nabla^{g_{\mathrm{RF}}} R_{g(t)}, n_{g_{\mathrm{RF}}} \rangle_{g_{\mathrm{RF}}} dA_{g_{\mathrm{RF}}} \right) dt \right]. \label{eqn:mlrf6}
    \end{align}
    By the traced Bianchi identity
    \[
        \nabla^{g(t)}R_{g(t)} = 2\div_{g(t)}(\Ric_{g(t)})
    \]
    and since $\norm{g(t)-g_{\mathrm{RF}}}_{C_\sigma^{2,\alpha}(M)}$ and $\norm{\Ric_{g(t)}}_{C_\sigma^{0,\alpha}(M)}$ are uniformly bounded, the terms \eqref{eqn:mlrf5} and \eqref{eqn:mlrf6} cancel, leaving 
    \[
        \lambda_{\ALF}(g(t_2), g_{\mathrm{RF}}) - \lambda_{\ALF}(g(t_1), g_{\mathrm{RF}}) = \int_{t_1}^{t_2} \norm{\Ric_{g(t)} + \nabla^{g(t),2}f_{g(t)}}_{L^2(e^{-f_{g(t)}})}^2dt,
    \]
    or equivalently \eqref{eqn:mlrf}.

    Now suppose that $\lambda_{\ALF}(g(t), g_{\mathrm{RF}})$ is constant. Then $\Ric_{g(t)} + \nabla^{g(t),2}f_{g(t)} = 0$. Tracing this and comparing to \eqref{eqn:fge}, we get 
    \[
        \Delta_g f_g + |\nabla^g f_g|_g^2 = 0.
    \]
    By integration by parts,
    \[
        0 = \lim_{R \to \infty} \int_{\partial B_R^{g_0}} \langle f_g \nabla^g f_g, n_g \rangle_g e^{-f_g}
        = \lim_{R \to \infty} \int_{B_R^{g_0}} |\nabla^g f_g|_g^2 e^{-f_g},
    \]
    which implies that $f_g$ is constant, and therefore $\Ric_{g(t)} \equiv 0$.
\end{proof}

\subsection{First and Second variation of $\lambda_{\ALF}$}

\begin{proposition}[first variation]
    Let $(M, g$) be an ALF manifold with $m$, $\alpha$, $\eta$, $\sigma$, $g_0$, $g_{\mathrm{RF}}$ as in \Cref{ass:bs}. Suppose $g \in B_\epsilon(g_{\mathrm{RF}};C_\sigma^{2,\alpha})$ and $h \in C_\sigma^{2,\alpha}(S^2T^\ast M)$, then 
    \begin{equation}
        \delta_g[\lambda_{\ALF}(g,\overline{g})](h) = -\int_{M} \langle \Ric_{f_g}, h \rangle e^{-f_g}.
        \label{eqn:lfv}
    \end{equation}
\end{proposition}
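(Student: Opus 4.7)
My plan is to treat the special case $g \in \cM_\sigma^{2,\alpha}(g_{\mathrm{RF}}, \epsilon)$ first via the variation formula already established for $\lambda_{\ALF}^\circ$, and then cover the general case $g \in B_\epsilon(g_{\mathrm{RF}}; C_\sigma^{2,\alpha})$ by differentiating the extended expression \eqref{eqn:lalfe} directly.

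For $g \in \cM_\sigma^{2,\alpha}(g_{\mathrm{RF}}, \epsilon)$, formula \eqref{eqn:vfl} gives $\delta_g[\lambda_{\ALF}^\circ](h) = -\int_M \langle \Ric_{f_g}, h\rangle_g e^{-f_g} + m(g+h, g)$, so it suffices to compute the variation of $m(\cdot, \overline g)$. By the additivity of relative mass in \Cref{prop:rml}, $m(g+sh, \overline g) = m(g+sh, g) + m(g, \overline g)$; since the defining boundary integral of $m(g+sh, g)$ is linear in $sh$, one reads off $\delta_g[m(\cdot, \overline g)](h) = m(g+h, g)$. Subtracting cancels the boundary contribution exactly, giving \eqref{eqn:lfv}.

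For the general case, I would differentiate \eqref{eqn:lalfe} term by term, setting $\phi := \delta_g[f_g](h) \in C_\sigma^{2,\alpha}(M)$ with the uniform control from \Cref{lem:dfgub}. Applying \eqref{eqn:vfb} to the interior integrand, the bulk term weighted by $R_{f_g} = 2\Delta_g f_g - |\nabla^g f_g|^2 + R_g$ vanishes by \eqref{eqn:fge}, leaving $-\int_{B_R^{g_0}} \langle \Ric_{f_g}, h\rangle_g e^{-f_g}$ together with two boundary integrals on $\partial B_R^{g_0}$. The boundary integrand $\langle h(\nabla^g f_g) - (\Tr_g h)\nabla^g f_g + 2\phi \nabla^g f_g, n_g\rangle_g e^{-f_g}$ is $O(\rho_{g_0}^{-2\sigma-1})$ and integrates to $O(R^{m-2-2\sigma}) \to 0$, since the ALF sphere has area $O(R^{m-1})$ and $\sigma > (m-2)/2$. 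In the remaining boundary integral $\int \langle \div_g h - \nabla^g \Tr_g h, n_g\rangle_g e^{-f_g}$, the same dimension count lets me first replace $e^{-f_g}$ by $1$ (the error $e^{-f_g}-1 = O(\rho^{-\sigma})$ paired with $O(\rho^{-\sigma-1})$ integrates to zero) and then swap $\div_g, \nabla^g, \Tr_g, n_g$ for their $g_{\mathrm{RF}}$-counterparts using $g - g_{\mathrm{RF}} = O(\rho^{-\sigma})$ and $\Gamma^g - \Gamma^{g_{\mathrm{RF}}} = O(\rho^{-\sigma-1})$. This limit is exactly the variation of the mass boundary term in \eqref{eqn:lalfe} and cancels it, leaving the desired identity \eqref{eqn:lfv}. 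The main obstacle is this boundary bookkeeping; the condition $\sigma > (m-2)/2$ is used in a sharp way to ensure that every error term vanishes in the limit.
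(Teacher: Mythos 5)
Your general-case argument---differentiating the extended expression \eqref{eqn:lalfe} via \eqref{eqn:vfb}, killing the weighted bulk term with \eqref{eqn:fge}, and showing the boundary terms either vanish or cancel against the variation of the mass term using the decay \eqref{eqn:vfu} and $\sigma > \frac{m-2}{2}$---is exactly the paper's proof, only written out in more detail. The preliminary special case via \eqref{eqn:vfl} and mass additivity is redundant (and would anyway require $g+sh$ to remain in $\cM_{\sigma}^{2,\alpha}(g_{\mathrm{RF}},\epsilon)$ for $m(g+sh,\overline{g})$ to be defined), but it does not affect the correctness of the main argument.
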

\begin{proof}
    Using \eqref{eqn:vfb} we can compute the first variation of integrands in \eqref{eqn:lalfe}, and they uniformly converge as $R \to \infty$ by \eqref{eqn:vfu}. Therefore, we get \eqref{eqn:lfv}.
\end{proof}

\begin{proposition}[Second variation]
    Let $(M, g$) be an ALF manifold with $m$, $\alpha$, $\eta$, $\sigma$, $g_0$, $g_{\mathrm{RF}}$ as in \Cref{ass:bs}. Suppose $g \in B_\epsilon(g_{\mathrm{RF}};C_\sigma^{2,\alpha})$ and $h \in C_\sigma^{2,\alpha}(S^2T^\ast M)$, then
    \begin{align}
        \delta_g^2[\lambda_{\ALF}(g, g_{\mathrm{RF}})](h,h)
        &= \dfrac{1}{2} \int_M \langle \Delta_{f_g} h - 2\Rm_g(h) - \cL_{B_{f_g}(h)}(g), h \rangle_g e^{-f_g} \nonumber\\
        &\quad + \dfrac{1}{2} \int_M \langle h \circ \Ric_{f_g} + \Ric_{f_g} \circ h - 2\left( \dfrac{\Tr_g h}{2} - \delta_g[f_g](h) \right) \Ric_{f_g}, h \rangle_g e^{-f_g},
        \label{eqn:lsv}
    \end{align} 
    where $\Rm_g h := R_{ipjq}h^{pq}$\footnote{We use the convention of $R(X,Y)Z = \nabla_Y \nabla_X Z - \nabla_X \nabla_Y Z + \nabla_{[X,Y]}Z$.} and $B_{f_g}(h) := \div_{f_g}h - \nabla^g \left( \frac{\Tr_g h}{2} - \delta_g [f_g](h) \right)$. 
\end{proposition}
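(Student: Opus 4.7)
The plan is to differentiate the first-variation formula \eqref{eqn:lfv} once more along the straight path $g_s := g + sh$ and then reorganize the resulting terms using integration by parts against the weighted measure $e^{-f_g}dV_g$, discarding boundary contributions on $\partial B_R^{g_0}$ that vanish as $R\to\infty$ thanks to the decay of $h$ and $f_g$. Concretely, by \eqref{eqn:lfv} I have
\[
\delta_{g_s}[\lambda_{\ALF}(g_s,g_{\mathrm{RF}})](h) \;=\; -\int_M \langle \Ric_{f_{g_s}},\, h\rangle_{g_s}\, e^{-f_{g_s}}\,dV_{g_s},
\]
so that $\delta_g^2\lambda_{\ALF}(h,h)$ is the sum of four groups of terms, coming respectively from the variation of: (i) the bare Ricci tensor $\Ric_g$; (ii) the Hessian $\nabla^{g,2}f_g$, which splits into a Christoffel-symbol contribution applied to $\nabla^g f_g$ plus the Hessian of the scalar $\delta_g[f_g](h)$; (iii) the inner product $\langle\,\cdot\,,h\rangle_g$, yielding the symmetric contraction $h\circ\Ric_{f_g}+\Ric_{f_g}\circ h$; and (iv) the weighted measure, which by \Cref{lem:vfs} produces the factor $(\tfrac{1}{2}\Tr_g h -\delta_g[f_g](h))\Ric_{f_g}$.

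For (i), I invoke the classical identity $-2\,\delta_g\Ric(h)=\Delta_g h - 2\Rm_g(h) + h\circ\Ric_g + \Ric_g\circ h - \nabla^{g,2}\Tr_g h - \cL_{2\div_g h}(g)$, and combine it with (ii) to absorb the Hessian pieces. The key algebraic step is to recognize that after converting $\nabla^g_{\nabla^g f_g}$-terms into the drift part of $\Delta_{f_g}$ via integration by parts against $e^{-f_g}$ (which uses $\div_{f_g}=e^{f_g}\div_g(e^{-f_g}\,\cdot\,)$), the first-order terms reorganize precisely into $\cL_{B_{f_g}(h)}(g)$, where
\[
B_{f_g}(h)=\div_{f_g} h - \nabla^g\!\left(\frac{\Tr_g h}{2}-\delta_g[f_g](h)\right).
\]
The Hessian-of-scalar contribution coming from (ii) is the part $\nabla^g(\delta_g[f_g](h))$ of $B_{f_g}(h)$; the Hessian $\nabla^{g,2}\Tr_g h$ coming from (i) contributes the other half. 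The cross contribution of (iii) with the first variation, namely the factor $-2(\tfrac{1}{2}\Tr_g h - \delta_g[f_g](h))\Ric_{f_g}$, is then exactly what the formula records.

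I expect the main technical obstacle to be twofold. First, the existence and $C_\sigma^{2,\alpha}$-regularity of $\delta_g[f_g](h)$, needed so that every integrand is integrable and every integration by parts is justified, must be invoked from the implicit function theorem argument of \Cref{lem:dfgub} together with the weighted elliptic equation \eqref{eqn:fgwe}. Second, each time I integrate by parts a boundary term appears on $\partial B_R^{g_0}$. Under $h\in C_\sigma^{2,\alpha}(S^2T^*M)$ with $\sigma>(m-2)/2$ and $f_g\in C_\sigma^{2,\alpha}(M)$, every such boundary integrand is of the schematic form $h\cdot\nabla h\cdot e^{-f_g}$, $h\cdot h\cdot\nabla f_g\cdot e^{-f_g}$ or $(\delta_g[f_g](h))\cdot\nabla h\cdot e^{-f_g}$, hence pointwise $O(\rho_{g_0}^{-2\sigma-1})$, while the area of $\partial B_R^{g_0}$ grows like $R^{m-1}$; since $2\sigma+1>m-1$, all these boundary terms vanish in the limit $R\to\infty$. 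Once these go away the remaining identity is exactly the Cao–Hamilton–Ilmanen-type second-variation calculation familiar from the closed case, and grouping the surviving terms produces \eqref{eqn:lsv}.
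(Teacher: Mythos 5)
Your plan is, in substance, the computation that the paper outsources: the paper's own proof of \eqref{eqn:lsv} consists of citing \cite[(3.8)--(3.10)]{DO20} (the ALE version of exactly this differentiation of \eqref{eqn:lfv}) together with the remark about $\delta_g[f_g](h)$ and \eqref{eqn:fgwe}. Your two ALF-specific supplements are the right ones: the existence and $C^{2,\alpha}_\sigma$-control of $\delta_g[f_g](h)$ via \eqref{eqn:fgwe} and the implicit-function-theorem argument of \Cref{lem:dfgub}, and the vanishing of boundary contributions, which indeed follows from the count $2\sigma+1>m-1$ (valid since $\sigma>\tfrac{m-2}{2}$ and $\partial B_R^{g_0}$ has area growth of order $R^{m-1}$). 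So the route is the same as the one behind the paper's citation, just written out.

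Two concrete points would make your sketch fail as written and need repair. First, the ``classical identity'' you invoke for the linearized Ricci tensor is mis-stated: with the paper's conventions the correct schematic form is $-2\,\delta_g\Ric(h)=\Delta_g h-2\Rm_g(h)-\bigl(\Ric_g\circ h+h\circ\Ric_g\bigr)+\nabla^{g,2}\Tr_g h-\cL_{(\div_g h)^{\sharp}}g$, i.e.\ the first-order/trace terms enter precisely as $-\cL_{(\div_g h-\frac12\nabla^g\Tr_g h)^{\sharp}}g$, which is what matches $-\cL_{B_{f_g}(h)}(g)$ in \eqref{eqn:lsv} once $f_g$ is included. Your version has $-\nabla^{g,2}\Tr_g h$, a spurious factor in $\cL_{2\div_g h}(g)$, and the opposite sign on $\Ric_g\circ h+h\circ\Ric_g$; with those coefficients the terms cannot reassemble into $-\cL_{B_{f_g}(h)}(g)$, and the $\tfrac12\,(h\circ\Ric_{f_g}+\Ric_{f_g}\circ h)$ coefficient in \eqref{eqn:lsv} only comes out after the (correctly signed) Lichnerowicz-type terms are combined with the variation of the metric inner product. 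Second, the mechanism is misattributed: the drift terms converting $\Delta_g$ into $\Delta_{f_g}$ and $\div_g$ into $\div_{f_g}$, and the appearance of $\nabla^{g,2}\bigl(\delta_g[f_g](h)\bigr)$ inside $B_{f_g}(h)$, arise \emph{pointwise} from varying the weighted term $\nabla^{g,2}f_g$ (the Christoffel variation $\delta\Gamma(h)$ contracted with $\nabla^g f_g$, plus the Hessian of $\delta_g[f_g](h)$), not from integrating by parts against $e^{-f_g}$; integration by parts is only needed if you choose to rewrite individual terms, and then your decay count legitimately kills the boundary integrals. With the corrected linearization and this bookkeeping, the regrouping into \eqref{eqn:lsv} goes through exactly as in \cite{DO20}.
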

\begin{proof}
    The variation formula follows from \cite[(3.8)-(3.10)]{DO20}. Note that the term $\delta_g[f_g](h)$ can be computed explicitly, but vanishes when $h$ is traceless and divergence-free, see \eqref{eqn:fgwe}.
\end{proof}

\section{Dynamical instability and Linear stability of ALF metrics}

Recall that $\lambda_{\ALF}$ is non-decreasing along the Ricci flow. Consequently, if a perturbation of a metric increases the value of $\lambda_{\ALF}$, then the Ricci flow starting from the perturbed metric can never return to the original one. Motivated by this observation, we define the notions of stability and instability for Ricci-flat metrics as follows.

\begin{definition}
    A Ricci-flat ALF metric $g_{\mathrm{RF}}$ is said to be \emph{linearly stable} if the second variation of $\lambda_{\ALF}$ is nonpositive, i.e.
    \[
        \delta_{g_{\mathrm{RF}}}^2[\lambda_{\ALF}](h, h) \le 0 
    \]
    for all symmetric 2-tensors $h$ which satisfy the decay condition $|\nabla^k h| = O(\rho^{-1/2-\epsilon-k})$ for some $\epsilon > 0$. The metric $g_{\mathrm{RF}}$ is said to be \emph{dynamically unstable} otherwise.
\end{definition}

\begin{remark}
    We note that hyperk\"ahler ALF manifolds are linearly stable. This follows from the Weitzenböck formula in \cite[Proposition 11.2]{DO24}. We also note that dynamical stability of Taub-NUT metric among $U(2)$-invariant metrics is shown in \cite{DG21}.
\end{remark}

The linear (in)stability would optimally be computed on $h$ such that $\nabla h\in L^2$. We opted for pointwise controls so that the dynamical instability statement holds thanks to the following result.
\begin{proposition}
    Let $(M, g_{\mathrm{RF}})$ be an ALF Ricci-flat metric and $h \in C_{\sigma}^{2,\alpha}(S^2T^\ast M)$ a divergence-free traceless 2-tensor on $M$ with $\sigma \in (\frac{m-2}{2}, m-2)$ satisfying 
    \begin{equation}
        \delta_{g_\mathrm{RF}}^2[\lambda_{\ALF}](h, h) > 0.
        \label{eqn:insvp}
    \end{equation}
    Then there exists $s>0$ such that $\lambda_{\ALF}(g_{\mathrm{RF}}+sh, g_{\mathrm{RF}}) > 0 = \lambda_{\ALF}(g_{\mathrm{RF}}, g_{\mathrm{RF}})$. In particular, a Ricci flow starting at $g_{\mathrm{RF}}+sh$ will always stay at a definite distance of $g$ in $C_\sigma^{2,\alpha}(S^2T^\ast M)$.
\end{proposition}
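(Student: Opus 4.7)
The plan is to combine three ingredients: a local Taylor expansion of $\lambda_{\ALF}(\cdot,g_{\mathrm{RF}})$ at $g_{\mathrm{RF}}$, the monotonicity along Ricci flow from Theorem \ref{thm:mlrf}, and the $C_\sigma^{2,\alpha}$-continuity of the functional.

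First I would check that $g_{\mathrm{RF}}$ is a critical point of $\lambda_{\ALF}(\cdot,g_{\mathrm{RF}})$ with vanishing value. Since $R_{g_{\mathrm{RF}}}=0$, the unique $C_\sigma^{2,\alpha}$ solution of $(-4\Delta_g+R_g)v_g=-R_g$ from Proposition \ref{prop:lcalf} is $v_{g_{\mathrm{RF}}}\equiv 0$, hence $w_{g_{\mathrm{RF}}}\equiv 1$ and $f_{g_{\mathrm{RF}}}\equiv 0$. Combining this with $m(g_{\mathrm{RF}},g_{\mathrm{RF}})=0$ and Proposition \ref{prop:le} gives $\lambda_{\ALF}(g_{\mathrm{RF}},g_{\mathrm{RF}})=0$. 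The first variation formula \eqref{eqn:lfv} then yields $\delta_{g_{\mathrm{RF}}}[\lambda_{\ALF}(\cdot,g_{\mathrm{RF}})](h)=-\int_M\langle \Ric_{f_{g_{\mathrm{RF}}}},h\rangle e^{-f_{g_{\mathrm{RF}}}}=0$ because $\Ric_{g_{\mathrm{RF}}}=0$ and $f_{g_{\mathrm{RF}}}=0$.

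Next I would Taylor expand in $s$. The map $g\mapsto f_g\in C_\sigma^{2,\alpha}(M)$ is smooth in $g\in B_\epsilon(g_{\mathrm{RF}};C_\sigma^{2,\alpha})$ by the implicit function theorem argument of Lemma \ref{lem:fgub}, and the boundary relative-mass integrand in Proposition \ref{prop:le} depends polynomially on $g-g_{\mathrm{RF}}$. Hence $s\mapsto \lambda_{\ALF}(g_{\mathrm{RF}}+sh,g_{\mathrm{RF}})$ is $C^2$ near $s=0$, and the two observations above give the expansion
\[
    \lambda_{\ALF}(g_{\mathrm{RF}}+sh,g_{\mathrm{RF}}) \;=\; \tfrac{s^{2}}{2}\,\delta^{2}_{g_{\mathrm{RF}}}[\lambda_{\ALF}](h,h) \;+\; o(s^{2}).
\]
By the strict positivity assumption \eqref{eqn:insvp}, this is strictly positive for all sufficiently small $s>0$; fix such an $s$ and set $\lambda_{0}:=\lambda_{\ALF}(g_{\mathrm{RF}}+sh,g_{\mathrm{RF}})>0$.

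Finally I would combine monotonicity with continuity. By Theorem \ref{thm:mlrf}, along the Ricci flow $g(t)$ with $g(0)=g_{\mathrm{RF}}+sh$ one has $\lambda_{\ALF}(g(t),g_{\mathrm{RF}})\ge \lambda_{0}$ for all $t$ for which the flow remains in $B_\epsilon(g_{\mathrm{RF}};C_\sigma^{2,\alpha})$. On the other hand, the same smoothness used for the Taylor expansion shows that $g\mapsto \lambda_{\ALF}(g,g_{\mathrm{RF}})$ is continuous in $C_\sigma^{2,\alpha}$ near $g_{\mathrm{RF}}$; since its value at $g_{\mathrm{RF}}$ is $0$, there is a $C_\sigma^{2,\alpha}$-neighborhood $U$ of $g_{\mathrm{RF}}$ on which $\lambda_{\ALF}(\cdot,g_{\mathrm{RF}})<\lambda_{0}/2$. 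Monotonicity therefore forbids $g(t)\in U$ for any $t\ge 0$, which is the claimed definite distance. The main obstacle I anticipate is justifying the uniform $o(s^2)$ remainder and, more importantly, the $C^{0}$-continuity of $\lambda_{\ALF}$ with respect to the $C_\sigma^{2,\alpha}$-topology, which requires showing that the weighted boundary term and the volume integral $\int R_g w_g$ in Proposition \ref{prop:le} behave continuously; this should reduce to the analytic dependence of $w_g$ on $g$ established via the Fredholm framework of Section \ref{sec:fl} and Lemma \ref{lem:fgub}.
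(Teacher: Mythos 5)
Your overall strategy is the same as the paper's: observe that $g_{\mathrm{RF}}$ is a critical point with $w_{g_{\mathrm{RF}}}\equiv 1$, $f_{g_{\mathrm{RF}}}\equiv 0$, $\lambda_{\ALF}(g_{\mathrm{RF}},g_{\mathrm{RF}})=0$, expand $\lambda_{\ALF}(g_{\mathrm{RF}}+sh,g_{\mathrm{RF}})$ to second order so that \eqref{eqn:insvp} forces positivity for small $s$, and then combine monotonicity (\Cref{thm:mlrf}) with $C_\sigma^{2,\alpha}$-continuity of $\lambda_{\ALF}$ to keep the flow away from $g_{\mathrm{RF}}$. Those parts of your argument are correct and match the paper.

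The gap is exactly the step you flag and do not carry out: the claim that $s\mapsto\lambda_{\ALF}(g_{\mathrm{RF}}+sh,g_{\mathrm{RF}})$ is $C^2$ with second derivative $\delta^2_{g_{\mathrm{RF}}}[\lambda_{\ALF}](h,h)$ and a uniform $o(s^2)$ remainder does not follow merely from ``analytic dependence of $w_g$ on $g$ plus polynomial dependence of the boundary integrand.'' The functional is only defined through the $R\to\infty$ limit in \Cref{prop:le}/\eqref{eqn:lalfe}, i.e.\ as a limit of differences of individually divergent quantities, so one must control the boundary terms uniformly in $R$ along the path $t\mapsto g_{\mathrm{RF}}+th$ and, more importantly, see why the weight contributes only at higher order. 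The paper does this quantitatively: it writes $\lambda_{\ALF}(g_{\mathrm{RF}}+sh,g_{\mathrm{RF}})=\int_0^s\delta_{g_{\mathrm{RF}}+th}[\lambda_{\ALF}](h)\,dt$ using \eqref{eqn:lfv}, splits the integrand into the Ricci part (which expands as $t\cdot\tfrac12\int\langle\Delta_{g_{\mathrm{RF}}}h-2\Rm_{g_{\mathrm{RF}}}(h),h\rangle$ plus explicit $t^2$-errors) and the $\Hess f_{g_{\mathrm{RF}}+th}$ part, and shows the latter is $O(t^2)$ by integration by parts, Cauchy--Schwarz, the bound $\norm{e^{-f_{g_{\mathrm{RF}}+th}}\nabla f_{g_{\mathrm{RF}}+th}}_{L^2}=O(t\norm{\nabla^{g_{\mathrm{RF}}}h}_{L^2})$ (which rests on $f_{g_{\mathrm{RF}}}=0$, hence $f_{g_{\mathrm{RF}}+th}=O(t)$, via \Cref{lem:fgub} and \cite[Proposition 4.1]{DO20}), and the fact that $\div_{g_{\mathrm{RF}}}h=0$, so $\norm{\div_{g_{\mathrm{RF}}+th}h}_{L^2}=O(t)$. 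Note that your proposal never uses the divergence-free hypothesis, which is a sign that the abstract-regularity shortcut is skipping real structure: it enters precisely in this remainder estimate. The same computation is what yields the $C_\sigma^{2,\alpha}$-continuity of $\lambda_{\ALF}$ that your last step needs, so to complete the proof you should replace the appeal to abstract $C^2$-in-$s$ regularity by these weighted estimates (uniform boundary decay as in \eqref{eqn:vfu} together with \Cref{lem:fgub}, \Cref{lem:dfgub}).
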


\begin{remark}
    Previous results such as \cite{DK24} prove the existence of a perturbation $g+s h$ increasing the scalar curvature. Even though Ricci flow preserves positive scalar curvature and $g$ is Ricci-flat, this is not sufficient to show that Ricci flow stays away from the metric $g$. In fact, on $\mathbb{R}^n$ there are Ricci flows with positive scalar curvature that flow back to the flat metric \cite{Li18}.
\end{remark}

\begin{proof}
    Note that by \Cref{lem:fgub} we may assume that $f_{g_{\mathrm{RF}}+th}$ is uniformly bounded for $0 \le t \le s$ by assuming $s$ is sufficiently small. We will use this fact without mentioning again throughout the proof.

    By the first variation of $\lambda_{\ALF}$, \eqref{eqn:lfv}, we get 
    \begin{align*}
        \lambda_{\ALF}(g_{\mathrm{RF}}+sh, g_{\mathrm{RF}})
        &= \int_0^s \delta_{g_{\mathrm{RF}}+th}[\lambda_{\ALF}](h)dt\\
        &= \int_0^s \left[ \int_M -\langle \Ric_{g_{\mathrm{RF}}+th} + \Hess_{g_{\mathrm{RF}}+th}, h \rangle e^{-f_{g_{\mathrm{RF}}+th}} \right].
    \end{align*}
    We first estimate the Ricci curvature term as follow.
    \begin{align*}
        &\int_M - \langle \Ric_{g_{\mathrm{RF}}+th}, h \rangle e^{-f_{g_{\mathrm{RF}}+th}}\\
        &\quad= t \left( \int_M \frac{1}{2} \langle \Delta_{g_{\mathrm{RF}}}h - 2\Rm_g(h) , h \rangle \right) + t^2 O\left( \int_M \left( |h||\nabla^{g_{\mathrm{RF}},2}h| + |\nabla^{g_{\mathrm{RF}}}h|^2 \right)|h| \right)
    \end{align*}
    For the Hessian term we use integration by parts.
    \begin{align*}
        &\left| \int_M \langle \Hess f_{g_{\mathrm{RF}}+th}, h \rangle e^{-f_{g_{\mathrm{RF}}+th}}\right|\\
        &\quad= \left| - \int_M \langle \nabla^{g_{\mathrm{RF}}+th} f_{g_{\mathrm{RF}}+th}, \div_{f_{g_{\mathrm{RF}}+th}}(h) \rangle e^{-f_{g_{\mathrm{RF}}+th}} \right|\\
        &\quad\le \left| \int_M \langle \nabla^{g_{\mathrm{RF}}+th} f_{g_{\mathrm{RF}}+th}, \div_{g_{\mathrm{RF}}+th}(h) \rangle e^{-f_{g_{\mathrm{RF}}+th}} \right| + \left| \int_M h(\nabla^{g_{\mathrm{RF}}+th} f_{g_{\mathrm{RF}}+th}, \nabla^{g_{\mathrm{RF}}+th} f_{g_{\mathrm{RF}}+th}) e^{-f_{g_{\mathrm{RF}}+th}} \right|\\
        &\quad= O\left( \norm{ e^{-f_{g_{\mathrm{RF}}+th}} \nabla^{g_{\mathrm{RF}}+th} f_{g_{\mathrm{RF}}+th} }_{L^2} \norm{\div_{g_{\mathrm{RF}}+th}(h)}_{L^2} + \norm{h}_{C^0} \norm{ e^{-f_{g_{\mathrm{RF}}+th}} \nabla^{g_{\mathrm{RF}}+th} f_{g_{\mathrm{RF}}+th} }_{L^2}^2 \right)\\
        &\quad= t^2 O\left(\norm{h}_{C^0} \norm{\nabla^{g_{\mathrm{RF}}}h}_{L^2}^2\right).
    \end{align*}
    Here, the last two lines follow from Cauchy-Schwarz inequality and \cite[Proposition 4.1]{DO20}.
    
    Combining these two estimates and the second variation of $\lambda_{\ALF}$, \eqref{eqn:lsv}, we obtain
    \begin{align*}
        &\lambda_{\ALF}(g_{\mathrm{RF}}+sh, g_{\mathrm{RF}})\\
        &\quad= \int_0^s \left[ t\left( \dfrac{1}{2}\Delta_{g_{\mathrm{RF}}} h - \Rm_g(h) \right) + t^2 O\left( \norm{h}_{C^0} \left( \norm{\rho^{-1}h}_{L^2} \norm{\rho \nabla^{g_{\mathrm{RF}},2}h}_{L^2} + \norm{\nabla^{g_{\mathrm{RF}}}h}_{L^2}^2 \right) \right)  \right]dt\\
        &\quad=\frac{1}{2}s^2 \left( \frac{1}{2} \Delta_{g_{\mathrm{RF}}} h - \Rm_g(h) \right) + s^3 O\left( \norm{h}_{C^0} \left( \norm{\rho^{-1}h}_{L^2} \norm{\rho \nabla^{g_{\mathrm{RF}},2}h}_{L^2} + \norm{\nabla^{g_{\mathrm{RF}}}h}_{L^2}^2 \right) \right)\\
        &\quad= \frac{1}{2}s^2 \delta_{g_{\mathrm{RF}}}^2 [\lambda_{\ALF}](h,h) + s^3 O\left( \norm{h}_{C^0} \left( \norm{\rho^{-1}h}_{L^2} \norm{\rho \nabla^{g_{\mathrm{RF}},2}h}_{L^2} + \norm{\nabla^{g_{\mathrm{RF}}}h}_{L^2}^2 \right) \right).
    \end{align*}
    Therefore, for small enough $s>0$ we have $\lambda_{\ALF}(g_{\mathrm{RF}}+sh, g_{\mathrm{RF}}) > 0$ given \eqref{eqn:insvp}. The computation also shows that $\lambda_{\ALF}$ is continuous in $C_\sigma^{2,\alpha}(S^2 T^\ast M)$. Since $\lambda_{\ALF}$ is non-decreasing by \Cref{thm:mlrf}, the desired result follows.
\end{proof}

Our notion of (in)stability agrees with the definitions given in \cite[Definition 12]{BO23} for traceless, divergence-free deformations $h$ with suitable decay, since for such $h$ we have 
\[
    \delta_{g_{\mathrm{RF}}}^2[\lambda_{\ALF}](h, h) = \delta_{g_{\mathrm{RF}}}^2[\mathcal{S}](h, h),
\]
where $\mathcal{S} \colon g \mapsto \int R_g$ denotes the Einstein-Hilbert functional.

In \cite{BO23}, it is proved that the Kerr, Taub-Bolt, and Chen-Teo metrics are unstable by constructing explicit variations $h$ for which $\delta_g^2[\mathcal{S}](h, h) < 0$. However, the explicitly constructed $h$ is not divergence-free, and making it so required the use of an analytic tool, called $b$-calculus.
We show that $h$ can be made divergence-free using the analysis developed in \Cref{sec:fl}, thus avoiding the reliance on the heavy machinery of $b$-calculus. This yields a more self-contained and accessible proof of instability for the Kerr, Taub-Bolt, and Chen-Teo metrics.

\begin{proof}[Proof of Theorem \ref{thm:dyn instability}]
    We recall the construction of the variation $h$ from \cite{BO23}. Let $g_{\mathrm{RF}}$ denote one of the Kerr, Taub-Bolt, or Chen-Teo metrics. Let $\lambda$ be the simple (nonzero) eigenvalue of $W_{g_{\mathrm{RF}}}^+$, the self-dual part of the Weyl curvature, and define $\hat{f} := \lambda^{-1/3}=O(r)$, up to a multiplicative constant. It is known that the conformally rescaled metric $g_K := \hat{f}^2 g_{\mathrm{RF}}$ is an extremal K\"ahler metric. In particular, the vector field 
\[
    X := J \nabla^{g_{\mathrm{RF}}}\hat{f} = O(1)
\]
is Killing. Let $K := g_{\mathrm{RF}}(X, \cdot)=O(1)$ be the 1-form dual to $X$, and define the (anti-)self-dual 2-forms $\omega_\pm := d_\pm K=O(r^{-2})$. Then the symmetric 2-tensor $h:=\hat{f}^3 \omega_- \circ \omega_+=O(r^{-1})$ is the variation considered in \cite{BO23}.

In \Cref{sec:rfdf}, we show that for each of the Kerr, Taub-Bolt, and Chen-Teo metrics, there exists a function $\psi \in C^{2,\alpha}(M)$ such that, denoting $\div_0^\ast$ the traceless part of $\div^\ast_{g_{\mathrm{RF}}}$ 
\[
    \div_{g_{\mathrm{RF}}}(h - \div_0^\ast d\psi) = 0
\]
and $\div_0^\ast d\psi \in C_{3/4}^{1,\alpha}(M)$.
Moreover, \cite[Proposition 15]{BO23} shows that $h$ has the required decay.

Define the divergence-free perturbation $h_0 := h - \div_0^\ast d\psi$. By construction, we will have $h_0=O(r^{-2})$ since the leading term of $h$ and $\div_0^\ast d\psi$ coincide. By \cite{BO23}, $\div_g(\hat{f}^{-3}h) = 0$ and $\Delta_L h = c\hat{f}^{-3}h=O(r^{-4})$, where $\Delta_L = \Delta_g - 2\Rm_g$ denotes the Lichnerowicz Laplacian, for a positive constant $c$ depending on the multiplicative factor of $\hat{f}$ with respect to $\lambda^{-1/3}$. Then the second variation of $\lambda_{\ALF}$ in the direction $h_0$ becomes 
\[
    \delta_{g_{\mathrm{RF}}}^2[\lambda_{\ALF}](h_0, h_0) = \frac{1}{2}\int_M \langle \Delta_L h_0, h_0 \rangle = \frac{1}{2} \int_M \langle \Delta_L h_0, h \rangle = c\int_M \hat{f}^{-3}|h|^2 > 0,
\]
which establishes the instability.
\end{proof}

\section{A Positive Mass Theorem}
Motivated by \cite{BO22}, we define the weighted relative mass with a weight $f$ on an ALF manifold $M$ to be 
\[
    m_f(g, \overline{g}) := m(g, \overline{g}) + 2 \lim_{R \to \infty} \int_{\partial B_R^{g_0}} \langle \nabla^{g_0}f, n \rangle_{g_0}e^{-f}.
\]
The weight we choose is $f_g$ in \eqref{eqn:fg}. Consider the conformal metric $\widetilde{g} := e^{-2f_g/m}g$ as in \cite{LLS25}. Since $f_g \in C_\sigma^{2,\alpha}(M)$, the conformal metric $\widetilde{g}$ also satisfies 
\[
    \widetilde{g} - g_0 \in C_\eta^{2,\alpha}(S^2T^\ast M), \quad \widetilde{g} - g_{\mathrm{RF}} \in C_\sigma^{2,\alpha}(S^2T^\ast M).
\]
Moreover, the scalar curvature of conformal metric is 
\[
    R_{\widetilde{g}} = e^{\frac{2f_g}{m}} \left( R_g + 2\Delta_g f_g 
    - \frac{m-1}{m} |\nabla^g f_g|_g^2 \right) = \frac{1}{m}e^{\frac{2f_g}{m}} |\nabla^g f_g|^2,
\]
so in particular it is integrable. Therefore, the weighted relative mass is well-defined.
\begin{proposition}
    Let $g \in B_\epsilon(g_{\mathrm{RF}}; C_\sigma^{2,\alpha})$ with integrable scalar curvature and $f \in C_\sigma^{2,\alpha}(M)$. Consider the conformal metric $\widetilde{g} = e^{-\frac{2f}{m}}g$. Then the weighted relative mass of $(M, g, f)$ equals the relative mass of $(M, \widetilde{g})$, i.e.
    \[
        m_{f}(g, g_{\mathrm{RF}}) = m(\widetilde{g}, g_{\mathrm{RF}}).
    \]
    \label{prop:wmm}
\end{proposition}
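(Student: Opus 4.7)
The plan is to compare the two sides directly from the boundary-integral definitions of mass, exploiting the linearity of $\div_{g_{\mathrm{RF}}}$, $\Tr_{g_{\mathrm{RF}}}$ and $\nabla^{g_{\mathrm{RF}}}$ in their second argument. I would split $\widetilde{g} - g_{\mathrm{RF}} = (\widetilde{g} - g) + (g - g_{\mathrm{RF}})$, which (equivalently, via \Cref{prop:rml} applied after checking that $\widetilde{g}\in\cM_\sigma^{2,\alpha}(g_{\mathrm{RF}},\epsilon)$, since $\widetilde{g}-g=(e^{-2f/m}-1)g\in C_\sigma^{2,\alpha}$ and $R_{\widetilde{g}}$ is integrable for $\sigma>(m-2)/2$) yields
\[
m(\widetilde{g}, g_{\mathrm{RF}}) - m(g, g_{\mathrm{RF}}) = \lim_{R\to\infty}\int_{\partial B_R^{g_0}} \langle \div_{g_{\mathrm{RF}}}(\widetilde{g}-g) - \nabla^{g_{\mathrm{RF}}}\Tr_{g_{\mathrm{RF}}}(\widetilde{g}-g), n_{g_{\mathrm{RF}}}\rangle_{g_{\mathrm{RF}}}.
\]
It then remains to compute this boundary flux and match it with $2\lim_{R\to\infty}\int e^{-f}\langle\nabla^{g_0}f,n\rangle_{g_0}$.

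Next I would compute the integrand explicitly. Writing $\widetilde{g}-g = \psi g$ with $\psi := e^{-2f/m}-1\in C_\sigma^{2,\alpha}(M)$, the pointwise identities $\div_{g_{\mathrm{RF}}}(\psi g_{\mathrm{RF}})=\nabla^{g_{\mathrm{RF}}}\psi$ and $\Tr_{g_{\mathrm{RF}}}(\psi g_{\mathrm{RF}})=(m+1)\psi$, together with the observation that replacing $g_{\mathrm{RF}}$ by $g$ inside $\psi g$ introduces only quadratic-type errors of order $O(\rho_{g_0}^{-2\sigma-1})$, yield
\[
\div_{g_{\mathrm{RF}}}(\psi g) - \nabla^{g_{\mathrm{RF}}}\Tr_{g_{\mathrm{RF}}}(\psi g) = -m\nabla^{g_{\mathrm{RF}}}\psi + O(\rho_{g_0}^{-2\sigma-1}) = 2e^{-2f/m}\nabla^{g_{\mathrm{RF}}}f + O(\rho_{g_0}^{-2\sigma-1}),
\]
using $\nabla\psi = -\tfrac{2}{m}e^{-2f/m}\nabla f$.

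The last step, which is really the only place any work has to be done, is to show that replacing $e^{-2f/m}$ by $e^{-f}$ and the $g_{\mathrm{RF}}$-dependent boundary data ($\nabla^{g_{\mathrm{RF}}}f$, $n_{g_{\mathrm{RF}}}$, $\langle\cdot,\cdot\rangle_{g_{\mathrm{RF}}}$, and the induced area) by their $g_0$-counterparts does not alter the limit. Each of these swaps contributes a factor of $O(\rho_{g_0}^{-\sigma})$, which when multiplied against the leading $|\nabla f|=O(\rho_{g_0}^{-\sigma-1})$ produces an error integrand of order $O(\rho_{g_0}^{-2\sigma-1})$; integrated over $\partial B_R^{g_0}$ with area growth $O(R^{m-1})$, the total error is $O(R^{m-2-2\sigma})$, which vanishes as $R\to\infty$ precisely because $\sigma>(m-2)/2$. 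The main obstacle is this careful bookkeeping at infinity, and the decay threshold $\sigma>(m-2)/2$ is the essential input: it is exactly what guarantees both that $m(\widetilde{g},g_{\mathrm{RF}})$ is well-defined and that the two natural weights $e^{-f}$, $e^{-2f/m}$ (and the two reference metrics $g_0$, $g_{\mathrm{RF}}$) are interchangeable in the boundary limit, thereby giving $m_f(g,g_{\mathrm{RF}})=m(\widetilde{g},g_{\mathrm{RF}})$.
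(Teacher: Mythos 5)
Your proposal is correct and follows essentially the same route as the paper: expand the mass integrand of $\widetilde{g}-g_{\mathrm{RF}}$ linearly, identify the leading contribution of the conformal factor via $\div_{g_{\mathrm{RF}}}(\psi g_{\mathrm{RF}})-\nabla^{g_{\mathrm{RF}}}\Tr_{g_{\mathrm{RF}}}(\psi g_{\mathrm{RF}})=-m\nabla^{g_{\mathrm{RF}}}\psi=2e^{-2f/m}\nabla^{g_{\mathrm{RF}}}f$, and discard all sub-leading terms (including the swap $e^{-2f/m}\leftrightarrow e^{-f}$ and $g_{\mathrm{RF}},g\leftrightarrow g_0$) because the error integrand is $O(\rho_{g_0}^{-2\sigma-1})$ against boundary area growth $R^{m-1}$, which vanishes since $2\sigma>m-2$. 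The paper does this in one display without splitting off $g-g_{\mathrm{RF}}$ first, but that reorganization is cosmetic; your decay bookkeeping is exactly the paper's justification that "all subleading terms in the integrand vanish in the limit."
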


\begin{proof}
    Note that since $f \in C_\sigma^{2,\alpha}(M)$, $g-g_{\mathrm{RF}} \in C_\sigma^{2,\alpha}(S^2T^\ast M)$, and $2\sigma + 1 > m-1$, all subleading terms in the integrand vanish in the limit.
    \begin{align*}
    m(\widetilde{g}, g_{\mathrm{RF}})
    &= \lim_{R \to \infty} \int_{\partial B_R^{g_0}} \langle \div_{g_{\mathrm{RF}}} (\widetilde{g} - g_{\mathrm{RF}}) - \nabla^{g_{\mathrm{RF}}} \Tr_{g_{\mathrm{RF}}} (\widetilde{g} - g_{\mathrm{RF}}), n_{g_{\mathrm{RF}}} \rangle_{g_{\mathrm{RF}}}\\
    &= \lim_{R \to \infty} \int_{\partial B_R^{g_0}} e^{-\frac{2f}{m}} \left[ m(g) - \dfrac{2}{m} \langle \nabla^{g_{\mathrm{RF}}} f - (\Tr_{g_{\mathrm{RF}}}g) \nabla^{g_{\mathrm{RF}}} f, n_{g_{\mathrm{RF}}} \rangle_{g_{\mathrm{RF}}} \right]\\
    &= m(g) + 2 \int_{\partial B_R^{g_0}} \langle \nabla^{g_{\mathrm{RF}}}f_g, n_{g_{\mathrm{RF}}} \rangle_{g_{\mathrm{RF}}} e^{-f}\\
    &= m_f(g, g_{\mathrm{RF}}). \qedhere
    \end{align*}
\end{proof}

\begin{proposition}
    Let $g \in B_\epsilon(g_{\mathrm{RF}}; C_\sigma^{2,\alpha})$ with integrable scalar curvature and $f_g$ be as in \eqref{eqn:fg}. Then 
    \[
        m_{f_g}(g, g_{\mathrm{RF}}) = -\lambda_{\ALF}(g, g_{\mathrm{RF}}).
    \]
    \label{prop: wml}
\end{proposition}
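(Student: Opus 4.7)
The plan is to first spell out both sides. Using $\lambda_{\ALF}(g,g_{\mathrm{RF}}) = \lambda_{\ALF}^\circ(g) - m(g,g_{\mathrm{RF}})$ together with the definition of $m_{f_g}$, the $m(g,g_{\mathrm{RF}})$ contributions cancel and the claim reduces to the single boundary identity
\[
\lambda_{\ALF}^\circ(g) \;=\; -2\lim_{R\to\infty} \int_{\partial B_R^{g_0}} e^{-f_g}\,\langle \nabla^{g_0} f_g, n\rangle_{g_0}.
\]
The task is therefore to identify $\lambda_{\ALF}^\circ(g)$ with a flux at infinity of $\nabla f_g$ weighted by $e^{-f_g}$.

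\textbf{Writing $\lambda_{\ALF}^\circ$ as a flux in terms of $w_g$.} I would next invoke \Cref{prop:rw}, which gives $\lambda_{\ALF}^\circ(g) = \int_M R_g w_g$, together with the Euler--Lagrange equation $-4\Delta_g w_g + R_g w_g = 0$. Substituting $R_g w_g = 4\Delta_g w_g$ and applying the divergence theorem on $B_R^{g_0}$ yields
\[
\int_{B_R^{g_0}} R_g w_g \;=\; 4\int_{\partial B_R^{g_0}} \langle \nabla^g w_g, n_g\rangle_g,
\]
so in the limit $\lambda_{\ALF}^\circ(g) = 4\lim_R \int_{\partial B_R^{g_0}} \langle \nabla^g w_g, n_g\rangle_g$. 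The substitution $f_g = -2\log w_g$, equivalently $\nabla^g w_g = -\tfrac{1}{2}\, w_g \nabla^g f_g$ and $w_g = e^{-f_g/2}$, converts this into
\[
\lambda_{\ALF}^\circ(g) \;=\; -2\lim_{R\to\infty}\int_{\partial B_R^{g_0}} e^{-f_g/2}\,\langle \nabla^g f_g, n_g\rangle_g.
\]

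\textbf{Matching boundary data and main obstacle.} It remains to show that, in the limit $R\to\infty$, one may replace $e^{-f_g/2}$ by $e^{-f_g}$, and $(\nabla^g,n_g,dA_g)$ by $(\nabla^{g_0}, n_{g_0},dA_{g_0})$. The Taylor expansion gives $e^{-f_g/2}-e^{-f_g} = f_g/2 + O(f_g^2) = O(\rho^{-\sigma})$, and the weighted H\"older bounds in \Cref{ass:bs} give $|\nabla^g f_g|_g = O(\rho^{-\sigma-1})$ and $g-g_0 = O(\rho^{-\eta})$. Since ALF geometry has bounded $S^1$-fibers so that $|\partial B_R^{g_0}|_{g_0} = O(R^{m-1})$, each of these discrepancies contributes a boundary error of order at most
\[
R^{m-1}\cdot R^{-\sigma} \cdot R^{-\sigma-1} \;=\; R^{\,m-2-2\sigma},
\]
together with analogous terms involving $\eta$. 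These all vanish precisely because $\sigma > (m-2)/2$ (and $\eta>0$). This is the technical crux of the argument: the open-endpoint condition on $\sigma$ used throughout \Cref{ass:bs} is exactly what makes every boundary correction integrable and vanishing at infinity. Combining the three steps, the desired identity $m_{f_g}(g,g_{\mathrm{RF}}) = -\lambda_{\ALF}(g,g_{\mathrm{RF}})$ follows.
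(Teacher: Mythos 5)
Your argument is correct and is essentially the paper's: both proofs use the Euler--Lagrange equation $-4\Delta_g w_g + R_g w_g = 0$ together with an integration by parts to convert $\lambda_{\ALF}^\circ(g)$ into a flux of $\nabla^g f_g$ at infinity, after which the definition of $m_{f_g}$ closes the argument. The only real difference is where the integration by parts happens. The paper works directly with the quadratic form, writing $\int_{B_R^{g_0}}(4|\nabla^g w_g|_g^2 + R_g w_g^2) = \int_{B_R^{g_0}} w_g(-4\Delta_g w_g + R_g w_g) + 4\int_{\partial B_R^{g_0}} w_g\langle\nabla^g w_g, n\rangle$, so the boundary integrand $4w_g\nabla^g w_g = -2e^{-f_g}\nabla^g f_g$ already carries the weight $e^{-f_g}$; you instead quote \Cref{prop:rw} ($\lambda_{\ALF}^\circ(g)=\int_M R_g w_g$) and apply the divergence theorem to $4\Delta_g w_g$, which produces the weight $w_g = e^{-f_g/2}$ and forces your extra reconciliation step $e^{-f_g/2}\to e^{-f_g}$. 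That step is fine: the discrepancy in the integrand is $O(\rho^{-2\sigma-1})$ over a boundary of area $O(R^{m-1})$, and $2\sigma > m-2$. One caveat: your justification for replacing $(\nabla^g, n_g, dA_g)$ by $(\nabla^{g_0}, n_{g_0}, dA_{g_0})$ is imprecise. Those comparison terms are of size $O(R^{m-2-\sigma-\eta})$, so they vanish when $\sigma+\eta > m-2$, which does \emph{not} follow from ``$\sigma>(m-2)/2$ and $\eta>0$'' alone. The cleaner route is to compare the flux computed with $g$ (or with $g_{\mathrm{RF}}$, whose distance to $g$ decays at rate $\sigma$), where the error is $O(R^{m-2-2\sigma})\to 0$; the paper itself performs this identification silently (and uses the $g_{\mathrm{RF}}$-flux in \Cref{prop:wmm}), so this is a shared imprecision rather than a defect specific to your proof, but as written your stated reason for the $\eta$-terms vanishing is not valid.
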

\begin{proof}
    Note that by \Cref{prop:lcalf}, $\lambda_{\ALF}^\circ$ is attained by the unique solution $w_g$ satisfying \eqref{eqn:lcalf2}. Together with integration by parts then yields the following.
    \begin{align*}
        \lambda_{\ALF}(g, g_{\mathrm{RF}})
        &= \inf_{w - 1 \in C_c^\infty(M)} \int_M (4 |\nabla^g w|_g^2 + R_g w^2) - m(g, g_{\mathrm{RF}})\\
        &= \int_M (4|\nabla^g w_g|_g^2 + R_gw_g^2) - m(g, g_{\mathrm{RF}})\\
        &= \lim_{R \to \infty} \left[ \int_{B_R^{g_0}} w_g(-4\Delta_g w_g + R_g w_g) + \int_{\partial B_R^{g_0}} 4w_g \langle \nabla^g w_g, n \rangle \right] - m(g, g_{\mathrm{RF}})\\
        &= \lim_{R \to \infty} \int_{\partial B_R^{g_0}} -2\langle \nabla^g f_g, n \rangle e^{-f_g} - m(g, g_{\mathrm{RF}})\\
        &= -m_{f_g}(g, g_{\mathrm{RF}}). \qedhere
    \end{align*}
\end{proof}

These two propositions give that whenever we have a positive (relative) mass theorem for metrics with nonnegative scalar curvature, $\lambda_{\ALF}(g, \overline{g})$ has a sign up to the suitable choice of reference metric $\overline{g}$. In particular, if $(M, g)$ is spin and AF, then \cite{Dai04} shows that the positive mass theorem indeed holds. 

\begin{corollary}
    For a spin AF manifold $(M,g)$ with nonnegative scalar curvature, one has
\[
    \lambda_{\ALF}(g, \overline{g}) \le 0
\]
where $\overline{g}$ is asymptotic to $g_{\mathbb{R}^m \times S^1}$ at infinity. There is equality if and only if $g$ and $\overline{g}$ are isometric.

This inequality also provides a lower bound for mass:
\begin{equation}
    0\leq \int_M (4|\nabla^g w_g|_g^2 + R_gw_g^2) \leq m(g, \overline{g}).
\end{equation}
\end{corollary}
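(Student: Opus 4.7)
The strategy is to reduce the inequality to Dai's positive mass theorem for spin AF manifolds via the conformal change $\widetilde{g} = e^{-2f_g/m} g$, where $f_g = -2 \log w_g$ is the minimizer potential from \eqref{eqn:fg}. First, I would invoke \Cref{prop: wml} to rewrite
\[
\lambda_{\ALF}(g,\overline{g}) \;=\; -\,m_{f_g}(g,\overline{g}),
\]
and then \Cref{prop:wmm} to identify $m_{f_g}(g,\overline{g}) = m(\widetilde{g},\overline{g})$. The key calculation (already recorded just before the corollary) is that $R_{\widetilde g} = \frac{1}{m} e^{2 f_g/m} |\nabla^g f_g|_g^2 \ge 0$, and in particular $R_{\widetilde g}$ is integrable since $f_g \in C_\sigma^{2,\alpha}(M)$. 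Thus the conformal metric $\widetilde g$ is spin, AF, asymptotic to $\overline g$, and satisfies the hypotheses of Dai's spin positive mass theorem \cite{Dai04}. This gives $m(\widetilde g,\overline g) \ge 0$, hence $\lambda_{\ALF}(g,\overline g) \le 0$ immediately.

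For the equality case I would argue as follows. If $\lambda_{\ALF}(g,\overline g) = 0$, then $m(\widetilde g,\overline g)=0$, so Dai's rigidity statement forces $\widetilde g$ to be isometric to $\overline{g}$ (i.e.\ flat $\mathbb{R}^m \times S^1$ at infinity, hence globally flat by the rigidity). The vanishing of $R_{\widetilde g}$ forces $|\nabla^g f_g|_g \equiv 0$, so $f_g$ is constant; since $f_g \in C_\sigma^{2,\alpha}(M)$ decays at infinity, this constant must be $0$. Therefore $\widetilde g = g$ and $g$ is isometric to $\overline g$, as claimed. Conversely, if $g$ is isometric to $\overline g$ then $w_g \equiv 1$ and both sides of the equality vanish.

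For the displayed two-sided estimate on the mass: by \Cref{prop:lcalf} the infimum defining $\lambda_{\ALF}^\circ(g)$ is attained at the positive function $w_g$, and by \Cref{prop:rw} we have
\[
\int_M \bigl(4|\nabla^g w_g|_g^2 + R_g w_g^2\bigr) \;=\; \int_M R_g\, w_g.
\]
Since $R_g \ge 0$ and $w_g > 0$, the right-hand side is nonnegative, giving the lower bound. The upper bound is then just the rearrangement $\lambda_{\ALF}^\circ(g) = \lambda_{\ALF}(g,\overline g) + m(g,\overline g) \le m(g,\overline g)$ using the first part of the corollary.

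The only subtle point, and the main thing to verify carefully, is that the conformal metric $\widetilde g$ actually satisfies the precise hypotheses of \cite{Dai04} (in particular that $\widetilde g - \overline g$ lies in the appropriate weighted space with integrable scalar curvature and that the spin structure at infinity matches). Since $f_g \in C_\sigma^{2,\alpha}(M)$ with $\sigma > (m-2)/2$, the conformal factor $e^{-2f_g/m}$ differs from $1$ by a function in the same weighted class, so $\widetilde g - \overline g \in C_\sigma^{2,\alpha}(S^2T^*M)$ and the integrability $2\sigma + 2 > m$ is what makes all boundary terms in the conformal mass computation of \Cref{prop:wmm} converge. This is precisely why the weighted space $C_\sigma^{2,\alpha}$ was chosen with $\sigma \in \bigl(\tfrac{m-2}{2}, m-2\bigr)$ in \Cref{ass:bs}; no further work is needed.
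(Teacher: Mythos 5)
Your proposal is correct and follows essentially the same route as the paper: rewrite $\lambda_{\ALF}(g,\overline g)=-m_{f_g}(g,\overline g)=-m(\widetilde g,\overline g)$ via \Cref{prop: wml} and \Cref{prop:wmm}, note $R_{\widetilde g}\ge 0$, and apply Dai's spin positive mass theorem together with its rigidity case. The mass bound is also handled as in the paper (the lower bound is in fact immediate since $R_g\ge 0$ makes the integrand nonnegative), so no substantive differences remain.
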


\begin{remark}
    The above negativity of $\lambda_{\ALF}$ is interpreted as a stability result. The spin assumption is crucial: for instance, the inequality does not hold for Kerr metrics (and in particular Schwarzschild metric).
\end{remark}

We finally prove a positive mass theorem for spin manifolds asymptotic to the flat $(\mathbb{R}^3\times\mathbb{S}^1)/\mathbb{Z}_2$ with compatible spin structure. The equality case is that of the hyperkähler metrics of Hitchin--Page \cite{Pag81, Hit84}. By \cite{BM11}, these hyperkähler metrics are ALF, asymptotic to the flat metric $g_0$ to order $r^{-3+\varepsilon}$ for any $\varepsilon>0$, hence have zero mass relatively to $g_0$. By virtue of being hyperkähler, these metrics also carry a parallel spinor $\psi_0$ that is asymptotic to a parallel spinor of the flat metric $g_0$. 

\begin{proof}[Proof of Theorem \ref{thm:HP}]
    By taking the cover $\mathbb{R}^3\times\mathbb{S}^1\to(\mathbb{R}^3\times\mathbb{S}^1)/\mathbb{Z}_2$, we can apply the analysis of \cite{Dai04} and the associated extension of Witten's formula for mass to manifolds asymptotic to $(\mathbb{R}^3\times\mathbb{S}^1)/\mathbb{Z}_2$. Namely, one finds up to multiplying $\psi_0$ constant 
    \begin{equation}\label{eq:mass spin}
        m(g,g_0)=\int_M\big(4|\nabla\psi|^2 + \operatorname{R}|\psi|^2\big)dv_g,
    \end{equation}
    where $\psi$ is the harmonic spinor asymptotic to $\psi_0$. In the case of equality, one finds that $\psi$ is parallel, hence $g$ is hyperkähler. Note that Kato's inequality and \eqref{eq:mass spin} also tell us that if $\operatorname{R}\geq 0$ and if $\operatorname{R}\in L^1$, then $m(g,g_0)\geq \widetilde{\mathcal{F}}(|\psi|,g)\geq \lambda^\circ_{\ALF}(g)\geq 0$. 

    Combining the above positive mass theorem and the equalities from \Cref{prop:wmm} and \Cref{prop: wml}, we deduce that if $g$ is asymptotic to $(\mathbb{R}^3\times\mathbb{S}^1)/\mathbb{Z}_2$ at a rate $\frac12<\tau<1$ and has nonnegative scalar curvature, then $\lambda_{\ALF}(g,g_0)\leq 0$ with equality if and only if $g$ is hyperkähler.
\end{proof}

\appendix
\section{Gauge-fixing of Perturbations on Conformally K\"ahler ALF Metrics}

In this appendix, we explain how to modify the variations constructed in \cite{BO23} to make them divergence-free. The strategy is the same for all three metrics. We begin by observing that the divergence of the given variation can be expressed as the differential of a function depending on the radial coordinate $r$ and a fiber coordinate at infinity. This function can be written as the Laplacian of a well-behaved function that captures the correct asymptotic behavior. By subtracting this, we obtain that the remaining term can be written as the Laplacian of another function with suitable decay. This leads to a correction term of $h$.

Although the initial correction functions may appear singular near the origin, they should be interpreted as having the specific asymptotic behavior only outside a compact set. Within the compact region, the function can be arbitrarily defined. The resulting error introduced in this region is controlled by the Fredholm theory developed earlier, and thus does not affect the validity of the construction.

\subsection{Euclidean Kerr metric} \label{sec:rfdf}
The Kerr metric is defined for parameters $m > 0$ and $a$ as
\[
    g_{\mathrm{Kerr}} = \rho^2 \left( \dfrac{1}{\Psi}dr^2 + d\theta^2 \right) + \dfrac{\Psi}{\rho^2}(d\tau + a\sin^2\theta d\phi)^2 + \dfrac{\sin^2\theta}{\rho^2}(ad\tau - (r^2-a^2)d\phi)^2,
\]
where $\rho^2 = r^2 - a^2\cos^2\theta$ and $\Psi = r^2-2mr-a^2$. 

For the Kerr metric, $\lambda = \dfrac{4m}{(r-a\cos\theta)^3}$, so we take $\hat{f} = r-a\cos\theta$. The Killing vector field and the associated 1-form are 
\begin{align*}
    J \nabla^{g_{\mathrm{Kerr}}} \hat{f} &= \partial_\tau,\\
    K &= \dfrac{r^2-2mr-a^2\cos^2\theta}{r^2-a^2\cos^2\theta}d\tau - \dfrac{2mra\sin^2\theta}{r^2-a^2\cos^2\theta}d\phi.
\end{align*}
Then for $\omega_- = d_-K$,
\[
    \div_{g_{\mathrm{Kerr}}}h = -3\omega_-(K) = \dfrac{-3m(dr - a\sin\theta d\theta)}{2(r+a\cos\theta)^2} = d\left( \dfrac{3m}{2(r+a \cos\theta)} \right).
\]

On the Kerr metric, we have 
\begin{align*}
    \Delta_{g_{\mathrm{Kerr}}} r &= - \dfrac{2(r-m)}{r^2-a^2\cos^2\theta} = \dfrac{2}{r} - \dfrac{2m}{r^2} + O(r^{-3}),\\
    \Delta_{g_{\mathrm{Kerr}}} \log r &= \dfrac{r^2+a^2}{r^2(r^2-a^2\cos^2\theta)} = \dfrac{1}{r^2} + O(r^{-3}),\\
    \Delta_{g_{\mathrm{Kerr}}} \cos\theta &= -\dfrac{2\cos\theta}{r^2-a^2\cos^2\theta} = -\dfrac{2\cos\theta}{r^2} + O(r^{-3}).
\end{align*}
Therefore, 
\[
    \div_{g_{\mathrm{Kerr}}} h = \dfrac{3}{4} \Delta_{g_{\mathrm{Kerr}}} \left( mr + 2m^2 \log r + am \cos\theta + \varphi \right)
\]
for some $\varphi \in C_{3/4}^{2,\alpha}(M)$ by \Cref{cor:lfi}.
Then we get 
\[
    \div_{g_{\mathrm{Kerr}}}\left(h - \div_0^\ast \left( \left(m + \frac{2m^2}{r}\right)dr - am\sin\theta d\theta + d\varphi \right)\right) = 0
\]
Note that in the case of the Schwarzschild metric with $a=0$, we find $\phi=0$.

\subsection{Taub-Bolt metric}
The Taub-Bolt metric is defined for parameters $m = \frac{5}{4}n > 0$ as 
\[
    g_{\mathrm{TB}} = \rho^2\left( \dfrac{1}{\Psi}dr^2 + d\theta^2 + \sin^2\theta d\phi^2 \right) + 4n^2 \dfrac{\Psi}{\rho^2} (d\psi + \cos \theta d\phi)^2,
\]
where $\rho^2 = r^2-n^2$ and $\Psi = r^2 - 2mr + n^2$.

For the Taub-Bolt metric, $\lambda = \dfrac{9n}{(r+n)^3}$, so we take $\hat{f} = r+n$. The Killing vector field and the associated 1-form are 
\begin{align*}
    J\nabla^{g_{\mathrm{TB}}}\hat{f} &= 2n\partial_\psi,\\
    K &= 2n \dfrac{r^2-2mr+n^2}{r^2-n^2}d\psi.
\end{align*}
Then for $\omega_- = d_-K$, 
\[
    \div_{g_{\mathrm{TB}}}h = -3\omega_-(K) = -\dfrac{3n}{8(r-n)^2} = d \left( \dfrac{3n}{8(r-n)} \right).
\]

On the Taub-Bolt metric, we have 
\begin{align*}
    \Delta_{g_{\mathrm{TB}}}r &= \dfrac{4r-5n}{2r^2-2n^2} = \dfrac{2}{r} - \dfrac{5n}{2r^2} + O(r^{-3}),\\
    \Delta_{g_{\mathrm{TB}}} \log r &= \dfrac{1}{r^2}.
\end{align*}

Therefore, 
\[
    \div_{g_{\mathrm{TB}}} h = \dfrac{3}{4} \Delta_{g_{\mathrm{TB}}}\left( \dfrac{n}{4}r + \dfrac{9n}{8}\log r + \varphi \right)
\]
for some $\varphi \in C_{3/4}^{2,\alpha}(M)$ by \Cref{cor:lfi}. Then we get 
\[
\div_{g_{\mathrm{TB}}}\left(h - \div_0^\ast \left( \left( \frac{n}{4} + \frac{9n}{8r} \right)dr + d\varphi \right)\right) = 0.
\]

\subsection{Chen-Teo metric}
The Chen-Teo metric is defined as 
\[
    g_{\mathrm{CT}} = \dfrac{\kappa H(x,y)}{(x-y)^3} \left( \dfrac{dx^2}{X} - \dfrac{dy^2}{Y} - \dfrac{XY}{\kappa F(x,y)}d\phi^2 \right) + \dfrac{F(x,y)}{(x-y)H(x,y)}\left( d\tau + \dfrac{G(x,y)}{F(x,y)}d\phi \right)^2,
\]
where 
\[
    X = P(x) \quad \text{and} \quad Y = P(y)
\]
for a quartic polynomial $P(t) = a_0 + a_1t + a_2t^2 + a_3t^3 + a_4t^4$ and $F(x,y)$, $H(x,y)$ $G(x,y)$ are polynomials in $x$ and $y$ given by 
\begin{align*}
    F(x,y) &= y^2 X - x^2 Y\\
    H(x,y) &= (\nu x + y) [(\nu x - y)(a_1 - a_3xy) - 2(1-\nu)(a_0 - a_4x^2y^2)]\\
    G(x,y) &= (\nu^2 a_0 + 2\nu a_3 y^3 + 2\nu a_4 y^4 - a_4 y^4)X + (a_0 - 2\nu a_0 - 2\nu a_1 x - \nu^2 a_4 x^4)Y
\end{align*} 
for parameters $\kappa$ and $\nu \in (-1,1)$. 

For the Chen-Teo metric, $\lambda = -\dfrac{2(\nu+1)}{\kappa} \left( \dfrac{x-y}{\nu x+y} \right)^3$, so we take $\hat{f} = \dfrac{\sqrt{\kappa}}{\nu+1} \dfrac{\nu x + y}{x-y}$. The Killing vector field is 
\begin{align*}
    J\nabla^{g_{\mathrm{CT}}}\hat{f} = \partial_\tau.
\end{align*}
Then by \cite[Remark 3.2 and (4.13)]{BK21}, for $\omega_- = d_-K$ as before,
\[
\div_{g_{\mathrm{CT}}}h = -3\omega_-(K) = d \left( \dfrac{-3(x-y)(\nu x+y)(a_1 - a_3 xy)}{(\nu-1)H} \right).
\]

As in \cite[Section 4.2.1]{BK21}, we further assume that $P(x)$ is a cubic polynomial with roots 
\[
    x_1 = -4\xi^3 (1-\xi), \quad
    x_2 = -\xi(1-2\xi+2\xi^2), \quad
    x_3 = 1-2\xi,
\]
and a formal fourth root $x_4 = \infty$, and take $\nu = -2\xi^2$. 
This give the two-parameter family of Chen-Teo metrics in $(\kappa, \xi)$. Reparametrize $x_1 < y < x_2 < x < x_3$ as 
\[
    x = x_2 - \dfrac{x_2 \sqrt{\kappa (1-\nu^2)}}{r}\cos^2\dfrac{\theta}{2}, \quad y = x_2 + \dfrac{x_2 \sqrt{\kappa (1-\nu^2)}}{r} \sin^2\dfrac{\theta}{2}.
\]
Under this reparametrization, we have 
\[
    \div_{g_{\mathrm{CT}}}h = \dfrac{c_1}{r} + \dfrac{c_2 + c_3\cos\theta}{r^2} + O(r^{-3}),
\]
where $c_1, c_2, c_3$ are constants depending only on $\xi$.

Also the Laplacian of several functions are as follows:
\begin{align*}
    \Delta_{g_{\mathrm{CT}}} r &= \dfrac{2}{r} - \dfrac{2\sqrt{\kappa}(1+2\xi^2)^{3/2}}{(1-2\xi^2)^{1/2}} \dfrac{1}{r^2} + O(r^3),\\
    \Delta_{g_{\mathrm{CT}}} \log r &= \dfrac{1}{r^2} + O(r^{-3}),\\
    \Delta_{g_{\mathrm{CT}}} \cos\theta &= -\dfrac{2\cos\theta}{r^2} + O(r^{-3}).
\end{align*}

Therefore, for constants $c_1'$, $c_2'$, $c_3'$ depending on $\xi$, 
\[
    \div_{g_{\mathrm{CT}}} h = \dfrac{3}{4}\Delta_{g_{\mathrm{CT}}} (c_1' r + c_2' \log r + c_3' \cos\theta + \varphi)
\]
for some $\varphi \in C_{3/4}^{2,\alpha}(M)$  by \Cref{cor:lfi}. 
Then we get 
\[
\div_{g_{\mathrm{CT}}}\left(h - \div_0^\ast \left( \left( c_1' + \frac{c_2'}{r} \right)dr -c_3'\sin\theta d\theta +  d\varphi\right)\right) = 0.
\]

\bibliographystyle{amsalpha}
\bibliography{refs}
\end{document}